\setlist{nolistsep}
\algnewcommand\algorithmicinput{\textbf{INPUT:}}
\algnewcommand\INPUT{\item[\algorithmicinput]}
\algnewcommand\algorithmicoutput{\textbf{OUTPUT:}}
\algnewcommand\OUTPUT{\item[\algorithmicoutput]}
\def\argmin{\mathop{\rm argmin}}
\def\argmax{\mathop{\rm argmax}}
\theoremstyle{plain}
\newtheorem{theorem}{Theorem}
\newtheorem{proposition}[theorem]{Proposition}
\newtheorem{lemma}[theorem]{Lemma}
\theoremstyle{remark}
\newtheorem{remark}{Remark}
\newtheorem{definition}{Definition}
\newtheorem{assumption}{Assumption}
\newcommand{\bc}{\begin{center}}
\newcommand{\ec}{\end{center}}
\begin{document}


\bc {\sc \Large  Localising change points in \\piecewise polynomials of general degrees} \ec


\bc Yi Yu$^\dagger$, Sabyasachi Chatterjee$^\ddagger$, Haotian Xu$^\dagger$ \ec 

\bc $^\dagger$Department of Statistics, University of Warwick \ec

\bc $^\ddagger$Department of Statistics, University of Illinois at Urbana-Champaign \ec

\centerline{December 2021}

\begin{abstract}
In this paper we are concerned with a sequence of univariate random variables with piecewise polynomial means and independent sub-Gaussian noise.  The underlying polynomials are allowed to be of arbitrary but fixed degrees.  All the other model parameters are allowed to vary depending on the sample size.  
    
We propose a two-step estimation procedure based on the $\ell_0$-penalisation and provide upper bounds on the localisation error.  We complement these results by deriving global information-theoretic lower bounds, which show that our two-step estimators are nearly minimax rate-optimal.  We also show that our estimator enjoys near optimally adaptive performance by attaining individual localisation errors depending on the level of smoothness at individual change points of the underlying signal.   In addition, under a special smoothness constraint, we provide a minimax lower bound on the localisation errors.  This lower bound is independent of the polynomial orders and is sharper than the global minimax lower bound.
\end{abstract}

\section{Introduction}

We are concerned with the data $y = (y_1, \ldots, y_n)^{\top} \in \mathbb{R}^n$.  For each $i \in \{1, \ldots, n\}$,
	\begin{equation}\label{eq-y-intro}
		y_i = f(i/n) + \varepsilon_i, 
	\end{equation}
	where $f:\, [0, 1] \to \mathbb{R}$ is an unknown piecewise-polynomial function and $\varepsilon_i$'s are independent mean zero sub-Gaussian random variables.  To be specific, associated with $f(\cdot)$, there is a sequence of strictly increasing integers $\{\eta_k\}_{k = 0}^{K+1}$, with $\eta_0 = 1$ and $\eta_{K+1} = n+1$, such that $f(\cdot)$ restricted on each interval $[\eta_k/n, \eta_{k+1}/n)$, $k = 0, \ldots, K$, is a polynomial of degree at most $r \in \mathbb{N}$.	The maximum degree $r$ is assumed to be arbitrary but fixed, and the number of change points $K$ is allowed to diverge as the sample size $n$ grows unbounded.  The goal of this paper is to estimate $\{\eta_k\}_{k = 1}^K$, called the change points of $f(\cdot)$, accurately and to understand the fundamental limits in detecting and localising these change points.  More detailed model descriptions can be found in \Cref{sec-main-result}.

The work in this paper falls within the general topic of change point analysis, which has a long history and is being actively studied till date.  In change point analysis, one assumes that the underlying distributions change at a set of unknown time points, called change points, and stay the same between two consecutive change points.  A closely related problem is change point detection in piecewise constant signals.  This is studied thoroughly in \cite{chan2013}, \cite{FrickEtal2014}, \cite{dumbgen2001multiscale}, \cite{dumbgen2008multiscale}, \cite{LiEtal2017}, \cite{jeng2012simultaneous} and \cite{wang2020univariate}, among others.  \cite{fearnhead2019detecting}, \cite{baranowski2016narrowest}, \cite{chen2020jump}, \cite{anastasiou2021detecting}, \cite{fryzlewicz2020narrowest}, \cite{maeng2019detecting}  and \cite{cheng2008kernel}  studied change point analysis in piecewise linear signals.  Our work in this paper can be seen as a generalisation of the aforementioned results, allowing for polynomials of arbitrary degrees, and the magnitudes of coefficients changes to vanish as the sample size grows unbounded, although some of the aforementioned work may contain more general assumptions on the noise structure.  Detailed comparisons with some existing literature will be provided after we present our main results.  

Beyond univariate sequence, the existing work on change point analysis includes studies on high-dimensional models \citep[e.g.][]{wang2017optimal, DetteEtal2018, wang2016high}, network models \citep[e.g.][]{wang2018optimal, CribbenYu2017, bhattacharjee2018change}, nonparametric models \citep[e.g.][]{padilla2019optimal, padilla2019optimal2, garreau2018consistent} and regression models \citep[e.g.][]{bai1998estimating, bai2003computation, wang2019localizing, wang2020detecting, rinaldo2021localizing}.

 To divert slightly, it is worth mentioning that instead of focusing on estimating the locations of the change points, a complementary problem is to estimate the whole of the underlying piecewise polynomial function itself.  This is a canonical problem in nonparametric regression and also has a long history.  The piecewise polynomial function is typically assumed to satisfy certain regularity at the change points.  The classical settings therein assume that the degrees of the underlying polynomials are taken to be some particular values and the change points, referred to as knots, are at fixed locations, see e.g.~\cite{green1993nonparametric} and \cite{wahba1990spline}.  More recent regression methods have focussed on fitting piecewise polynomials where the knots are not fixed beforehand and is estimated from the data \citep[e.g.][]{mammen1997locally, tibshirani2014adaptive, shen2020phase, guntuboyina2020adaptive}.   

In this paper, we focus on estimating the locations of the change points accurately, allowing for general and different degrees of polynomials within $f(\cdot)$, diverging number of change points, and different smoothness at different change points.  This framework, to the best of our knowledge, is the most flexible one in both change point analysis and spline regression analysis areas.  In the rest of this paper, we first formalise the problem and introduce the algorithm in \Cref{sec-problem-setup}, followed by a list of contributions in \Cref{sec-contributions}.  The main results are collected in \Cref{sec-main-result}, with more discussions in \Cref{sec-discussion} and the proofs in the Appendices. Extensive numerical experiments are presented in \Cref{sec-numeric}.

\subsection{The problem setup and the description of the estimator}\label{sec-problem-setup}

In order to estimate the change points of $f(\cdot)$, we propose a two-step estimator.  The estimator is defined in this subsection, following introduction of necessary notation used throughout this paper. 

Let $\Pi$ be any interval partition of $\{1, \ldots, n\}$, i.e.~a collection of $|\Pi| \geq 1$ disjoint subsets of $\{1, \ldots, n\}$,
	\[
		\Pi = \left\{\{1, \ldots, s_1 - 1\}, \, \{s_1, \ldots, s_2 - 1\}, \ldots, \{s_{|\Pi|-1}, \ldots, n\}\right\},
	\]
	for some integers $1 = s_0 < s_1 < \ldots s_{|\pi|-1} \leq n < s_{|\Pi|} = n+1$, with $|\cdot|$ denoting the cardinality of a set.  For any such partition $\Pi$, we denote $\eta(\Pi) = \{s_1, \ldots, s_{|\Pi|-1}\}$ to be its change points.  Let $\mathcal{P}_n$ be the collection of all such interval partitions of $\{1, \ldots, n\}$.    
	
For any fixed $\lambda > 0$ and given data $y \in \mathbb{R}^n$, let the estimated partition be
	\begin{equation}\label{eq:defplse}
		\widehat{\Pi} \in \argmin_{\Pi: \, \Pi \in \mathcal{P}_n} G(\Pi, \lambda),
	\end{equation}	
	where 
	\begin{align}\label{eq-def-G-loss}
		G(\Pi, \lambda) = \sum_{I \in \Pi} \|y_I - P_I y_I\|^2 + \lambda|\Pi| = \sum_{I \in \Pi} H(y, I) + \lambda |\Pi|,
	\end{align}
	the notation therein is introduced below.
	\begin{itemize}
	\item The norm $\|\cdot\|$ denotes the $\ell_2$-norm of a vector.
	\item For any interval $I = \{s, \ldots, e\} \subset \{1, \ldots, n\}$, let $y_I = (y_i, i \in I)^{\top} \in \mathbb{R}^{|I|}$ be the data vector on interval $I$ and $P_I$ be the projection matrix 
	\begin{equation}\label{eq-P-I-definition}
		P_I = U_{I, r}(U_{I, r}^{\top}U_{I, r})^{-1}U_{I, r}^{\top},
	\end{equation}
	with
	\begin{equation}\label{eq-U-i-r}
		U_{I, r} = \left(\begin{array}{cccc}
				1 & s/n & \cdots & (s/n)^r \\
				\vdots & \vdots & \vdots & \vdots \\
				1 & e/n & \cdots & (e/n)^r  
			\end{array}\right) \in \mathbb{R}^{(e-s+1) \times (r+1)}.
	\end{equation}

	\end{itemize}

We can see that the loss function $G(\cdot, \cdot)$ is a penalised residual sum of squares.  The penalisation is imposed on the cardinality of the partition, which is in fact an $\ell_0$ penalisation.  The residual sum of squares are the residuals after projecting data onto the discrete polynomial space. 	 \textbf{The initial estimators} $\{\widetilde{\eta}_k\}_{k = 1}^{\widehat{K}}$ are defined to be $\eta(\widehat{\Pi})$, the change points of $\widehat{\Pi}$.

With the estimated partition $\widehat{\Pi}$ and its associated change points $\eta(\widehat{\Pi})$, provided that $|\eta(\widehat{\Pi})| \geq 1$, we proceed to the second-step estimation.  For any $k \in \{1, \ldots, \widehat{K}\}$, let
	\begin{equation}\label{eq-sk-ek-def}
		s_k = \widetilde{\eta}_{k-1}/2 + \widetilde{\eta}_k/2, \quad e_k = \widetilde{\eta}_k/2 + \widetilde{\eta}_{k+1}/2 \quad \mbox{and}\quad I_k = [s_k, e_k),
	\end{equation}
	with $\widetilde{\eta}_0 = 1$ and $\widetilde{\eta}_{\widehat{K}+1} = n+1$.  For any $k \in \{1, \ldots, \widehat{K}\}$, we define 
	\begin{equation}\label{eq-refined-estimators}
		\widehat{\eta}_k = \argmin_{t \in I_k \setminus \{s_k\}} \left\{H(y, [s_k, t)) + H(y, [t, e_k))\right\},
	\end{equation}
	where $H(\cdot, \cdot)$ is defined in \eqref{eq-def-G-loss}.  The updated estimators $\{\widehat{\eta}_k\}_{k = 1}^{\widehat{K}}$ are our \textbf{final estimators}. 
	
As a summary, this two-step algorithm precedes with the optimisation problem \eqref{eq:defplse}, providing a set of initial estimators $\{\widetilde{\eta}_k\}_{k = 1}^{\widehat{K}}$.  With the initial estimators, a parallelisable second step works on every triplet $(\widetilde{\eta}_{k-1}, \widetilde{\eta}_k, \widetilde{\eta}_{k+1})$, $k \in \{1, \ldots, K\}$, to refine $\widetilde{\eta}_k$ and yield $\widehat{\eta}_k$.  This update does not change the number of estimated change points.  Note that the choice of $1/2$ in the definitions of $s_k$'s and $e_k$'s in \eqref{eq-sk-ek-def} is arbitrary, and any constant $c \in (0, 1)$ would work.
	
To help further referring back to our two-step algorithm, we present the full procedure in \Cref{algorithm:PDP}.

\begin{algorithm}[htbp]
\begin{algorithmic}
	\INPUT Data $\{y_i\}_{i=1}^{n}$, tuning parameters $\lambda > 0$.
	\State $\widehat{\Pi} \leftarrow \argmin_{\Pi:\, \Pi \in \mathcal{P}_n} G(\Pi, \lambda)$ \Comment{See \eqref{eq-def-G-loss}}
	\State $\mathcal{B} \leftarrow \eta(\widehat{\Pi})$ \Comment{The initial estimators}
	\State
	\If{$\mathcal{B} \neq \emptyset$}
		\State $\{\widehat{\eta}_k\}_{k = 1}^{\widehat{K}} \leftarrow$ Update $\mathcal{B}$ based on \eqref{eq-refined-estimators} \Comment{The final estimators}
	\EndIf
\caption{Two-step estimation}
\label{algorithm:PDP}
\end{algorithmic}
\end{algorithm} 

We conclude this subsection with two remarks, on the optimisation problem \eqref{eq:defplse} and the computational aspect of the upper bound on the polynomial degree $r$, respectively.
	
\begin{remark}[The optimisation problem \eqref{eq:defplse}]
The uniqueness of the solution \eqref{eq:defplse} is not guaranteed in general, but the properties we are to present regarding the change point estimators hold for any solutions.  In fact, under some mild conditions, for instance the existence of densities of the noise distribution, one can show the minimiser of \eqref{eq:defplse} is unique almost surely \citep[e.g.~Remark 4 in][]{wang2020univariate}.

The optimisation problem \eqref{eq:defplse}, with a general loss function, is known as the minimal partitioning problem \citep[e.g.~Algorithm 1 in][]{FriedrichEtal2008}, which is related with the Schwarz Information Criterion \citep[e.g.][]{yao1989least},  and can be solved by a dynamic programming approach in polynomial time.  The computational cost is of order $O(n^2\mathrm{Cost}(n))$, where $\mathrm{Cost}(n)$ is the computational cost of calculating $G(\Pi, \lambda)$, for any given $\Pi$ and $\lambda$.  To be specific, for \eqref{eq:defplse}, $\mathrm{Cost}(n) = O(n)$, where the hidden constants depend on the polynomial degree $r$, therefore the total computationa cost is $O(n^3)$.  A reference where the computational cost and the dynamic programming algorithm is explicitly mentioned is Lemma 1.1 in \cite{chatterjee2019adaptive}. 

We would like to mention that the minimal partitioning problem has previously being used in change point analysis literature for other models, including \cite{fearnhead2018changepoint}, \cite{killick2012optimal}, \cite{wang2020univariate}, \cite{wang2019localizing} and \cite{wang2020detecting}, among others.  In the spline regression analysis area, the $\ell_0$ penalisation is also exploited, for instance, \cite{shen2020phase} and \cite{chatterjee2019adaptive}, to name but a few.     We would like to reiterate that \cite{shen2020phase} and \cite{chatterjee2019adaptive} studied the estimation risk of the whole underlying functions.  The results derived in this paper focus on the change point localisation.
\end{remark}

\begin{remark}[The polynomial degree upper bound $r$]
The degree $r$ is in fact an input of the algorithm.  One needs to specify the degree $r$ in \eqref{eq:defplse} and \eqref{eq-refined-estimators}.  Usually, when we define a degree-$d$ polynomial, we let 
	\[
		g(x) = \sum_{l = 0}^d c_l x^l, \quad x \in \mathbb{R},
	\]
	with $\{c_l\}_{l = 0}^d \subset \mathbb{R}$ and $c_d \neq 0$.  If $c_d = 0$, then $g(\cdot)$ is regarded as a degenerate degree-$d$ polynomial.  In this paper, we do not emphasis on the highest degree coefficient being nonzero.  With this flexibility, in practice, as long as the input $r$ is not smaller than the largest degree of the underlying polynomials, then the performances of the algorithm are still guaranteed.  However, the larger the input $r$ is, the more costly the optimisation is.  More regarding this point will be discussed after we present our main theorem.
\end{remark}

\subsection{Main contributions}\label{sec-contributions}

To conclude this section, we summarise our contributions in this paper.

Firstly, to the best of our knowledge, this is the first paper studying the change point localisation in piecewise polynomials with general degrees.  The model we are concerned in this paper enjoys great flexibility.  We allow for the number of change points and the variances of the noise sequence to diverge, and the differences between two consecutive different polynomials to vanish, as the sample size grows unbounded.

Secondly, we propose a two-step estimation procedure for the change points, detailed in \Cref{algorithm:PDP}.  The first step is a version of the minimal partitioning problem \citep[e.g.][]{FriedrichEtal2008}, and the second step is a parallelisable update.  The first step can be done in $O(n^3)$ time and the second step in $O(n)$ time.  

Thirdly, we provide theoretical guarantees for the change point estimators returned by \Cref{algorithm:PDP}.  To the best of our knowledge, it is the first time in the literature, establishing the change point localisation rates for piecewise polynomials with general degrees.  Prior to this paper, the state-of-the-art results were only on piecewise linear signals.  In this paper, we allow the underlying contiguous polynomials to pertain different smoothness at different change points.  This is reflected in our localisation error bound for each individual change point.  In short, we show that our change point estimator enjoys nearly optimal adaptive localisation rates.  In addition to the global minimax rates, we have also derived minimax rates on the localisation errors when restricting to some special classes.  These results again, are the first time shown for the general polynomials.

Lastly, in a fully finite sample framework, we provide information-theoretic lower bounds characterising the fundamental difficulties of the problem, showing that our estimators are nearly minimax rate-optimal.  To the best of our knowledge, even for the piecewise linear case, previous minimax lower bounds only focused on the scaling in the sample size $n$ whereas we derive a minimax lower bound involving all the parameters of the problem.  More detailed comparisons with existing literature are in \Cref{sec-discussion}.

\section{Main Result}\label{sec-main-result}

In this section, we investigate the theoretical properties of the initial and the final estimators returned by \Cref{algorithm:PDP}.  

\subsection{Characterising differences between different polynomials}\label{sec-character}

In the change point analysis literature, the difficulty of the change point estimation task can be characterised by two key model parameters: the minimal spacing between two consecutive change points and the minimal difference between two consecutive underlying distributions.  In this paper, the underlying distributions are determined by the polynomial coefficients.  For two different at-most-degree-$r$ polynomials, the difference is nailed down to the difference between two $(r+1)$-dimensional vectors, consisting of the polynomial coefficients.  To characterise the difference, for any integers $r, K \geq 0$, we adopt the following reparameterising for any piecewise polynomial function $f(\cdot) \in \mathcal{F}^{r, K}_n$, where
	\begin{align}
		& \mathcal{F}_n^{r, K}  = \Big\{f(\cdot): [0, 1] \to \mathbb{R}: 1 = \eta_0 < \eta_1 < \cdots < \eta_K = n < \eta_{K+1} = n+1, \nonumber \\
		& \mbox{s.t. } \forall k \in \{0, 1, \ldots, K\}, \, f_{[\eta_k/n, \eta_{k+1}/n)}: [\eta_k/n, \eta_{k+1}/n) \to \mathbb{R}, \nonumber \\
		& \mbox{with } f|_{[\eta_k/n, \eta_{k+1}/n)}(x) = f(x), \nonumber \\
		& \mbox{is a right-continuous with left limit polynomial of degree at most } r.\Big\}. \label{eq-def-f-r-n-k}
	\end{align}

\begin{remark}[Uniqueness of the change points]
	Note that if two adjacent different polynomials are continuous at the change point, then the definition of change point is not necessarily unique and may differ by the degree of the polynomials.  In this case, either choice satisfying \eqref{eq-def-f-r-n-k} can serve as a change point and will not affect the theoretical results.
\end{remark}

\begin{definition}\label{def-jump-size}
Let $f(\cdot) \in \mathcal{F}_n^{r, K}$, $\{\eta_k\}_{k = 0}^{K+1} \subset \{1, \ldots, n+1\}$ be the collection of change points of $f(\cdot)$, with $\eta_0 = 0$, $\eta_{K+1} = n+1$.  For any $k \in \{1, \ldots, K\}$, let $f_{[\eta_{k-1}/n, \eta_{k+1}/n)}(\cdot): [\eta_{k-1}/n, \eta_{k+1}/n) \to \mathbb{R}$ be the restriction of $f(\cdot)$ on $[\eta_{k-1}/n, \eta_{k+1}/n)$.  Define the reparameterisation of $f_{[\eta_{k-1}/n, \eta_{k+1}/n)}(\cdot)$ as
	\begin{equation}\label{eq-repara}
		f(x) = \begin{cases}
 			\sum_{l = 0}^r a_{k, l} (x-\eta_k/n)^l, & x \in [\eta_{k-1}/n, \eta_k/n), \\
 			\sum_{l = 0}^r b_{k, l} (x-\eta_k/n)^l, & x \in [\eta_k/n, \eta_{k+1}/n),
 		\end{cases}
	\end{equation}
	where $\{a_{k, l}, b_{k, l}\}_{l=0}^r \subset \mathbb{R}$.  
	
Associated with the change point $\eta_k$, define the effective sample size as
	\[
		\Delta_k = \min\{\eta_k - \eta_{k-1}, \, \eta_{k+1} - \eta_k\}.
	\]
	For $l \in \{0, 1, \ldots, r\}$, define
	\[
		\kappa_{k, l} = |a_{k, l} - b_{k, l}| \quad \mbox{and} \quad \rho_{k, l} = \kappa_{k, l}^2 \Delta_k^{2l + 1} n^{-2l}.
	\]
	Finally, define the signal strength associated with the change point $\eta_k$ as
	\[
		\rho_k = \max_{l = 0, \ldots, r} \rho_{k, l}.
	\]
\end{definition}

We define the jump associated with each change point of $f(\cdot) \in \mathcal{F}^{r, K}_n$ in \Cref{def-jump-size}.  The definition is based on a reparameterisation of two consecutive polynomials.  Using the notation in \Cref{def-jump-size}, due to the definition \eqref{eq-def-f-r-n-k}, $f(\cdot)$ is an at-most-degree-$r$ polynomial in each $[\eta_k/n, \eta_{k+1}/n)$, $k \in \{0, \ldots, K\}$.  This enables the reparameterisation \eqref{eq-repara}.

With the reparameterisation \eqref{eq-repara}, it is easy to see, for any change point $\eta_k$, there must exist at least one $l \in \{0, 1, \ldots, r\}$, such that $\kappa_{k, l} > 0$, thus $\rho_k > 0$ for any change point.  In addition, if $f(\cdot)$ at $\eta_k/n$ is $d$-time differentiable but not $(d+1)$-time differentiable, $d \in \{-1, \ldots, r-1\}$, then 
	\[
		\begin{cases}
 			\kappa_{k, l} = 0, & l \in \{0, \ldots, d\}, \\
 			\kappa_{k, l} > 0, & l = d+1.
 		\end{cases}
	\]
	Here we use the convention that if $f(\cdot)$ is $-1$-time differentiable at $x$, then $f(\cdot)$ at $x$ is not continuous.   	
	
Based on the definition of $\rho_k$, we remark that for each individual change point, the signal strength $\rho_k$ is associated with a certain polynomial order $l_k$ such that 
	\begin{equation}\label{eq-def-lk}
		l_k \in \argmax_{l \in \{0, \ldots, r\}}\rho_{k, l} = \argmax_{l \in \{0, \ldots, r\}}\kappa_{k, l}^2 \Delta_k^{2l + 1} n^{-2l}.
	\end{equation}
	We will come back to this after \Cref{thm}.
	

There are two key advantages of using \Cref{def-jump-size} to characterise the difference.  Firstly, we allow for a full range of smoothness at the change points.  Detecting change points in piecewise linear models was studied in \cite{fearnhead2019detecting}, but the continuity at the change points is imposed.  Our formulation covers this continuity but also allows for discontinuity.  Most importantly, we allow for each change point to have its individual smoothness level, which is $\min\{l = 0, \ldots, r: \, \kappa_{k, l} > 0\}$.

Secondly, in addition to allowing for a full range of smoothness, we also take into consideration the magnitude of coefficients change at different order.  In the piecewise linear change point detection literature, \cite{chen2020jump} considered both continuous and discontinuous cases, but assuming all the changes are either zero or of order $O(1)$ and there is only one true change point. Our formulation allows the changes and the locations of the change points to be functions of the sample size $n$, and allows for the number of change points to diverge as the sample size grows unbounded.



\subsection{Change point localisation errors}\label{sec-upper-bound}

In this section, we present our main theorem providing theoretical guarantees on the output of \Cref{algorithm:PDP}, with assumptions collected in \Cref{assume-model}.  

\begin{assumption}[Model assumptions]\label{assume-model}
Assume that the data $\{y_i\}_{i = 1}^n$ are generated from \eqref{eq-y-intro}, where $f(\cdot)$ belongs to $\mathcal{F}_n^{r, K}$ defined in \eqref{eq-def-f-r-n-k} and $\varepsilon_i$'s are independent zero mean sub-Gaussian random variables\footnote{We recall the definition of sub-Gaussian random variable \citep[e.g.~Definition 2.5.6 in][]{vershynin2018high}.  We denote $\|\cdot\|_{\psi_2}$ as the sub-Gaussian or Orlicz-$\psi_2$ norm.  For any random variable $X$, let  
	\[
		\|X\|_{\psi_2} = \inf\left\{t > 0:\, \mathbb{E}\left\{\exp(X^2/t^2)\right\} \leq 2\right\}.
	\]}  with $\max_{i = 1}^n \|\varepsilon_i\|_{\psi_2}\leq \sigma^2$. 

We denote the collection of all change points of $f(\cdot)$ to be $\{\eta_1, \ldots, \eta_K\}$, satisfying
	\[
		\Delta = \min_{k \in \{1, \ldots, K+1\}} (\eta_k - \eta_{k-1}) = \min_{k \in \{1, \ldots, K\}} \Delta_k > 0,
	\]
	where $\eta_0 = 1$, $\eta_{K+1} = n+1$ and $\{\Delta_k\}_{k = 1}^K$ are defined in \Cref{def-jump-size}.
	
In addition, for any $k \in \{1, \ldots, K\}$, let the minimal signal strength parameter be 
	\[
		\rho = \min_{k = 1, \ldots, K} \rho_k > 0,
	\]
	where $\rho_k$ is defined in \Cref{def-jump-size}.
\end{assumption}

The problem now is completely characterised by the sample size $n$, the maximum degree $r$, the number of change points $K$, the upper bound of the fluctuations $\sigma$, the effective sample sizes $\{\Delta_k\}$ and the signal strengths $\{\rho_k\}$.  In this paper, we allow the maximum degree $r$ to be arbitrary but fixed, i.e.~not a function of the sample size $n$.  We allow the number of change points $K$ and the fluctuation bound $\sigma$ to diverge, the ratios of the effective sample size to the total sample size $\{\Delta_k/n\}$ and the active jump sizes $\{\kappa_k\}$ to vanish, as the sample size grows unbounded.  



\begin{theorem}\label{thm}
Let data $\{y_i\}_{i = 1}^n$ satisfy \Cref{assume-model}.  Let $\{\widetilde{\eta}_k\}_{k = 1}^{\widehat{K}}$ and $\{\widehat{\eta}_k\}_{k = 1}^{\widehat{K}}$ be the initial estimators and final estimators of \Cref{algorithm:PDP}, with inputs $\{y_i\}_{i = 1}^n$ and tuning parameter $\lambda$.    Assume that 
	\begin{equation}\label{eq-snr}
		\lambda = c_{\mathrm{noise}} K \sigma^2 \log(n) \quad \mbox{and} \quad \rho \geq c_{\mathrm{signal}} \lambda.
	\end{equation}
	
For each $k \in \{1, \ldots, K\}$, let 
	\begin{equation}\label{eq-Sk-set}
		\mathcal{S}_k = \{l = 0, \ldots, r: \, \rho_{k, l} \geq c_{\mathrm{signal}} \lambda\}.
	\end{equation}
	In addition, let
	\begin{equation}\label{eq-active-rk-kk}
		r_k = \min \left\{\argmin_{l \in \mathcal{S}_k} \left(\frac{\sigma^2 \log(n)}{\rho_{k, l}}\right)^{1/(2l+1)}\right\}  \quad \mbox{and} \quad \kappa_k = \kappa_{k, r_k}.
	\end{equation}

We have that
	\[
		\mathbb{P}\{\mathcal{E}\} \geq 1 - n^{-c_{\mathrm{prob}}},
	\]
	where
	\begin{align*}
		\mathcal{E} = \Bigg\{& \widehat{K} = K, \;\; |\widetilde{\eta}_k - \eta_k| \leq c_{\mathrm{error}} \left\{\frac{K n^{2r_k} \sigma^2 \log(n)}{\kappa_k^2}\right\}^{1/(2r_k+1)}, \\
		&\mbox{and} \;\; |\widehat{\eta}_k - \eta_k| \leq c_{\mathrm{error}} \left\{\frac{n^{2r_k} \sigma^2 \log(n)}{\kappa_k^2}\right\}^{1/(2r_k+1)}, \;\; \forall k \in \{1, \ldots, K\}\Bigg\}.
	\end{align*}
	The constants $c_{\mathrm{prob}}$, $c_{\mathrm{noise}}$, $c_{\mathrm{signal}}$ and $c_{\mathrm{error}} > 0$ are all absolute constants.
\end{theorem}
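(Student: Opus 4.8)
The plan is to reduce everything to a deterministic analysis on a high-probability event controlling the noise, and to split the argument along the two steps of \Cref{algorithm:PDP}. Let $\mathcal{A}$ be the intersection, over all intervals $I\subseteq\{1,\dots,n\}$, of the events $\|P_I\varepsilon_I\|^2\le C\sigma^2(r+\log n)$ and $|\langle (\mathrm{Id}-P_I)v_I,\varepsilon_I\rangle|\le C\sigma\sqrt{\log n}\,\|(\mathrm{Id}-P_I)v_I\|$ for all deterministic $v$ (we only need $v=f$ and $v$ a piecewise polynomial competitor). Since $P_I$ has rank $r+1$, the first bound follows from a $\chi^2$/Hanson--Wright tail bound and the second from a sub-Gaussian linear-form bound; a union bound over the $O(n^2)$ intervals, with $C$ an absolute constant, gives $\mathbb{P}(\mathcal{A})\ge 1-n^{-c_{\mathrm{prob}}}$. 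From now on we argue on $\mathcal{A}$.

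The technical core is a deterministic approximation estimate, proved by rescaling to a unit-length interval and using that the Gram matrix of monomials is well conditioned in the appropriate normalisation. If an interval $I=[s,e)$ contains exactly one true change point $\eta_k$ in its interior, with $I\subseteq[\eta_{k-1},\eta_{k+1})$ and $\tau=\min(\eta_k-s,e-\eta_k)$, then, provided $\tau$ exceeds a constant depending only on $r$,
\[
	H(f,I)=\|(\mathrm{Id}-P_I)f_I\|^2\ \asymp\ \max_{0\le l\le r}\kappa_{k,l}^2\,\tau^{2l+1}\,n^{-2l},
\]
with constants depending only on $r$. The lower bound is the one that matters: no degree-$\le r$ polynomial can be simultaneously close to $p_a$ on $[s,\eta_k)$ and to $p_b$ on $[\eta_k,e)$, and after rescaling the window $[\eta_k-\tau,\eta_k+\tau)$ the resulting deficit is, by equivalence of norms on the $(r+1)$-dimensional coefficient space, of the stated order. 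Note $\tau\le\Delta_k$ holds automatically here, and that the calibration \eqref{eq-snr} together with $r_k\in\mathcal{S}_k$ forces the window $\tau^{(k)}_0:=\{Kn^{2r_k}\sigma^2\log n/\kappa_k^2\}^{1/(2r_k+1)}$ — at which the order-$r_k$ bias equals $\lambda$ — to be at most a small, controllable fraction of $\Delta_k$.

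For the first step, let $\Pi^\ast$ be the partition induced by $\{\eta_k\}_{k=1}^{K}$. Optimality of $\widehat\Pi$ gives $G(\widehat\Pi,\lambda)\le G(\Pi^\ast,\lambda)$; expanding $H(y,I)=\|(\mathrm{Id}-P_I)f_I\|^2+2\langle(\mathrm{Id}-P_I)f_I,\varepsilon_I\rangle+\|(\mathrm{Id}-P_I)\varepsilon_I\|^2$ and absorbing the noise terms via $\mathcal{A}$, I would extract two facts. First, every true change point is detected: for each $k$ there is a point of $\eta(\widehat\Pi)$ within $c\,\tau^{(k)}_0$ of $\eta_k$, since otherwise the estimated interval straddling $\eta_k$ has $\tau\ge c\,\tau^{(k)}_0$ on both sides, so inserting into $\widehat\Pi$ all true change points it contains lowers the residual sum of squares by $\gtrsim\kappa_k^2\tau^{2r_k+1}n^{-2r_k}\gtrsim\lambda$ net of noise, beating the penalty increase and contradicting optimality; this gives $\widehat K\ge K$. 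Second, there is no redundant change point: an estimated interval lying strictly between two consecutive true change points carries only $H(y,I)\lesssim\sigma^2(r+\log n)\ll\lambda$, so deleting a suitably chosen estimated change point (chosen so that the resulting merge straddles no true change point, which is possible because each true change point is already covered) lowers $G$ by $\gtrsim\lambda$ net of noise, again a contradiction; iterating gives $\widehat K\le K$. Hence $\widehat K=K$, and after relabelling $|\widetilde\eta_k-\eta_k|\le c_{\mathrm{error}}\{Kn^{2r_k}\sigma^2\log n/\kappa_k^2\}^{1/(2r_k+1)}$ for all $k$.

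For the second step, $\widehat K=K$ and the initial bound imply that each $I_k=[s_k,e_k)$ contains exactly one true change point, namely $\eta_k$, at distance $\gtrsim\Delta_k$ from both endpoints (using $\Delta$-separation and that the initial error is a controllably small fraction of $\Delta_k$, which is where one takes $c_{\mathrm{signal}}c_{\mathrm{noise}}$ large). For any $t\in I_k$, in $H(y,[s_k,t))+H(y,[t,e_k))$ the sub-interval not containing $\eta_k$ is a single polynomial plus noise and contributes $\lesssim\sigma^2(r+\log n)$, while the one containing $\eta_k$ contributes, by the approximation estimate applied with breakpoint $\eta_k$ at distance $|t-\eta_k|$ from one end and $\gtrsim\Delta_k$ from the other, a bias $\gtrsim\kappa_k^2|t-\eta_k|^{2r_k+1}n^{-2r_k}$ plus noise $\lesssim\sigma^2(r+\log n)$. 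Since $\widehat\eta_k$ minimises this over $t$ and $t=\eta_k$ attains a value $\lesssim\sigma^2(r+\log n)$, we get $\kappa_k^2|\widehat\eta_k-\eta_k|^{2r_k+1}n^{-2r_k}\lesssim\sigma^2\log n$, i.e.\ the claimed final bound (with the convention that when this window falls below the $r$-dependent threshold of the approximation estimate, the argument instead forces $\widehat\eta_k=\eta_k$). I expect the main obstacle to be the first step: obtaining $\widehat K=K$ \emph{exactly}, rather than up to a constant factor, together with the adaptive, $r_k$-dependent initial rate, which requires the approximation estimate in precisely the form above — in particular the maximum over polynomial orders through the reparameterisation \eqref{eq-repara} — and careful accounting of the noise terms summed over $O(K)$ intervals, which is exactly where the calibration $\lambda\asymp K\sigma^2\log n$ is used and why the extra factor $K$ is present in the initial but absent from the final localisation bound.
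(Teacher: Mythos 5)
Your overall architecture matches the paper's: a high-probability event controlling noise projections, a deterministic analysis of the penalised least-squares step, and a local refinement step for each detected change point. The lower-bound approximation estimate you invoke is exactly Proposition~\ref{prop:1} in the paper (derived from Lemma~E.1 of \cite{shen2020phase}), and your refinement-step argument is essentially the paper's Lemma~\ref{lem:refine}. Two differences are worth noting, one cosmetic and one substantive.

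The cosmetic one: you control the cross term $\langle(\mathrm{Id}-P_I)f_I,\varepsilon_I\rangle$ via a separate union bound, whereas the paper avoids cross terms altogether by observing (Lemma~\ref{lem:2}) that $Q(y;I,J)$ is a positive-semidefinite quadratic form in $y$, so $\sqrt{Q(y;I,J)}\geq\bigl|\sqrt{Q(\theta;I,J)}-\sqrt{Q(\varepsilon;I,J)}\bigr|$. Both routes work; the paper's is slicker and lets the single event of Lemma~\ref{lem:noisebd} suffice.

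The substantive gap is in your argument that $\widehat K\leq K$, i.e.\ no redundant change point. You claim that if $\widehat K>K$ you can delete an estimated change point ``chosen so that the resulting merge straddles no true change point, which is possible because each true change point is already covered.'' This is not true in general. Consider $K=1$ with $\widehat\Pi$ having two internal boundaries at $\eta_1-\delta$ and $\eta_1+\delta$ with $\delta$ of order $\tau_0^{(1)}$: both cover $\eta_1$, the outer two intervals contain no true change point, but deleting either internal boundary merges across $\eta_1$. Your local argument then stalls, precisely because you lack a matching \emph{upper} bound on $Q(\theta;\cdot,\cdot)$ with constants close to the lower bound (something the paper explicitly flags in \Cref{sec-comparisons} as being unavailable for $r\geq 1$, and as the reason the argument of \cite{wang2020univariate} does not carry over). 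The paper closes the gap with a \emph{global} comparison (part (e) of the proof of Proposition~\ref{prop:main1}): starting from $G(\widehat\Pi,\lambda)\leq G(\Pi,\lambda)\leq\|\varepsilon\|^2+(K+1)\lambda$, intersecting $\widehat\Pi$ with the true partition and using Lemma~\ref{lem:1}, and then bounding $\|\varepsilon\|^2-S(\varepsilon,\widehat\Pi\cap\Pi)\leq(\widehat K+K+2)\tau$ on the noise event, one obtains $(\widehat K-K)\lambda\leq(4K+5)\tau$; this, combined with $\widehat K\geq K$ from the detection step and $\widehat K\leq 3K$ from part~(b), forces $\widehat K=K$ once $\lambda>(4K+5)\tau$. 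This global inequality is exactly where the factor $K$ in $\lambda\asymp K\sigma^2\log n$ is used, and it is the step your proposal is missing. Relatedly, your assertion of a two-sided $\asymp$ bound for $H(f,I)$ is stronger than what is available (or needed): only the one-sided lower bound is used, and claiming sharp matching constants is precisely what the paper says cannot currently be done for general $r$.
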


\begin{remark}[Tracking constants]
All the absolute constants $c_{\mathrm{prob}}$, $c_{\mathrm{noise}}$, $c_{\mathrm{signal}}$, $c_{\mathrm{error}}$ can be tracked in the proof, although we do not claim the optimality of the constants thereof.  The hierarchy of the constants are as follows.  

We first determine the constant $c_{\mathrm{prob}} > 0$, which only depends on the maximum degree $r$.  Given $c_{\mathrm{prob}}$, we can determine $c_{\mathrm{noise}}$, which only depends on $c_{\mathrm{prob}}$.  With $c_{\mathrm{prob}}$ and $c_{\mathrm{noise}}$ at hand, we can determine $c_{\mathrm{signal}} > 0$.  Lastly, the constant $c_{\mathrm{error}} > 0$ depends on $c_{\mathrm{signal}}, c_{\mathrm{noise}}$ and $c_{\mathrm{prob}}$.  We note that the larger $c_{\mathrm{signal}}$ is, the smaller $c_{\mathrm{error}}$ is.  
\end{remark}

\begin{remark}[The choice of $\lambda$]
The theoretical result relies on a choice of $\lambda$ detailed in \eqref{eq-snr}, which is a function of unknown parameters $K$ and $\sigma$, in addition to an unknown quantity $c_{\mathrm{noise}}$.  In practice, we do not recommend to estimate $K$ and $\sigma$, separately, due to the involvement of $c_{\mathrm{noise}}$.  One can adopt a data-driven method for tuning parameter selection \citep[e.g.][]{rinaldo2021localizing}.
\end{remark}

To understand \Cref{thm}, we conduct discussions in the following aspects: (1) how to understand the localisation rates; (2) how to understand the definitions of $\{r_k\}$; and (3) how to understand the the signal strength condition in \eqref{eq-snr}.  We conclude this discussion with piecewise linear models as examples.

\medskip
\noindent \textbf{The localisation rates.}

From \Cref{thm} we can see that the final estimators $\{\widehat{\eta}_k\}_{k = 1}^{\widehat{K}}$ improve upon the initial estimators $\{\widetilde{\eta}_k\}_{k = 1}^{\widehat{K}}$, by getting rid of $K$, the dependence on the number of change points, in their localisation error upper bounds.  It is possible that this $K$ term is actually an artefact of our current proof, and we might not need to update our initial estimators further.  See \Cref{sec-discussion} for more on this issue.  However, with our current proof technique we do need the second step update to obtain the improved localisation error bound.

As for each individual change point $\eta_k$, $k \in \{1, \ldots, K\}$, the localisation errors are 
	\begin{equation}\label{eq-local-rate}
		|\widehat{\eta}_k - \eta_k| \lesssim |\widetilde{\eta}_k - \eta_k| \lesssim \left\{\frac{Kn^{2r_k}\sigma^2 \log(n)}{\kappa_k^2}\right\}^{1/(2r_k+1)}.
	\end{equation}
	Due to the definition of $r_k$ and the condition \eqref{eq-snr}, it holds that 
	\[
		\eqref{eq-local-rate} = \left\{\frac{K\Delta^{2r_k + 1} \sigma^2 \log(n)}{\rho_{k, r_k}}\right\}^{1/(2r_k+1)} \lesssim \Delta\left\{\frac{K \sigma^2 \log(n)}{\lambda}\right\} ^{1/(2r_k+1)} \lesssim \Delta.
	\]
	With properly chosen constants, we ensure that in the event $\mathcal{E}$, we have that $|\widetilde{\eta}_k - \eta_k| < c\Delta$, $0 < c < 1/2$.  This guarantees that in the second step in \Cref{algorithm:PDP}, each interval $[s_k, e_k)$ contains one and only one true change point.  
	
\medskip
\noindent \textbf{The definitions of $r_k$.}

For each $k \in \{1, \ldots, K\}$, the final localisation rates are functions of $r_k$, which is one of the polynomial orders in the set $\mathcal{S}_k$.  As defined in \eqref{eq-active-rk-kk}, the choice of $r_k$ minimises the term 
	\begin{equation}\label{eq-any-bound}
		\left(\frac{\sigma^2 \log(n)}{\rho_{k, l}}\right)^{1/(2l+1)}, 
	\end{equation}
	for any $l \in \mathcal{S}_k$.  In fact, it can be seen from the proofs, for $l \in \mathcal{S}_k$ as defined in \eqref{eq-Sk-set}, the term \eqref{eq-any-bound} can serve as an upper bound in the localisation error rates.  Due to the definition of $r_k$ in \eqref{eq-active-rk-kk}, we see that our choice of $r_k$ ensures that the localisation rate is the sharpest.  If the minimiser is not unique, we choose $r_k$ to be the smallest element to guarantee the uniqueness in definition.  However, we remark any choice would unveil the final rate.
	
Recall that in \Cref{sec-character} after we present \Cref{def-jump-size}, we mentioned that the individual signal strength is associated with a certain polynomial order $l_k$, defined in \eqref{eq-def-lk}.  The choice of $r_k$ in \eqref{eq-active-rk-kk} is not necessarily the same as $l_k$, but it holds that $\{r_k, l_k\}\subset \mathcal{S}_k$.  If there is only one polynomial order whose the signal strength is large enough, i.e.~$|\mathcal{S}_k| = 1$, then $r_k = l_k$; otherwise they are not necessarily the same.
	
\medskip
\noindent \textbf{The signal strength condition \eqref{eq-snr}.}

Recalling the definition of $\rho$, the condition \eqref{eq-snr} requires that
	\begin{equation}\label{eq-snr-detailed}
		\min_{k = 1, \ldots, K} \max_{l = 0, \ldots, r} \rho_{k, l} = \min_{k = 1, \ldots, K} \max_{l = 0, \ldots, r} \kappa_{k, l}^2 \Delta_k^{2l + 1} n^{-2l} \gtrsim K \sigma^2 \log(n).
	\end{equation}
	This is to say, at any true change point, there is at least one polynomial order, the jump associated with which has strength larger than $K \sigma^2 \log(n)$.   The signal strength $\rho_{k, l}$ is a function of the coefficient change size, as well as the corresponding order.  
	
\medskip
\noindent \textbf{Piecewise linear models.}

Let us consider a concrete case where $K = 1$, $r = 1$ and the only change point is $\eta$.  A question that can be asked now is as follows.  
	\begin{displayquote}
		\emph{Is it easier to estimate the change point location when the underlying $f(\cdot)$ is continuous at $\eta$ or discontinuous at~$\eta$?	}
	\end{displayquote}

This question is partially answered in \cite{chen2020jump}, while assuming $\kappa \asymp \sigma \asymp 1$, and \cite{chen2020jump} argues that (in our terminology) the localisation errors for the continuous and discontinuous cases are of order $O(n^{2/3})$ and $O(1)$, respectively.  \Cref{thm} unfolds a more comprehensive picture.  We remark that \cite{chen2020jump} has also proposed a super-efficient rate $O(n^{1/2})$ for the continuous case.  We will provide more discussions with respect to that in \Cref{sec-special-case}.

For piecewise linear functions, at the change points, using the notation in \Cref{def-jump-size}, there are three situations: (a) $\kappa_{1, 0} = 0 \neq \kappa_{1, 1}$, i.e.~$f(\cdot)$ is continuous but not differentiable at the change point; (b) $\kappa_{1, 1} = 0 \neq \kappa_{1, 0}$, i.e.~$f(\cdot)$ is discontinuous but the slope is unchanged at the change point; and (c) $ \kappa_{1, 0}\kappa_{1, 1} \neq 0$, i.e.~$f(\cdot)$ is discontinuous and the slopes are different before and after the change point.
	\begin{itemize}
	\item In case (a), provided that $\kappa_{1, 1}^2 \Delta_1^3 n^2 \gtrsim \sigma^2 \log(n)$, the localisation error rate is
	\begin{equation}\label{eq-upppppppp}
		n^{2/3} \left\{\frac{\sigma^2 \log(n)}{\kappa_{1, 1}^2}\right\}^{1/3}.
	\end{equation}
	\item In case (b), provided that $\kappa_{1, 0}^2 \Delta_1 \gtrsim \sigma^2 \log(n)$, the localisation error rate is	
	\[
		\frac{\sigma^2 \log(n)}{\kappa_{1, 0}^2}.
	\]
	\item In case (c), there are further sub-cases.  For readability, we assume $\Delta_1 = n/2$, i.e.~the change point is right at the middle.  
	\begin{itemize}
	\item  If
		\[
			\kappa_{1, 1}^2 n \gtrsim \sigma^2 \log(n) \gtrsim \kappa_{1, 0}^2 n,
		\]	
		then the localisation error rate is
		\[
			n^{2/3} \left\{\frac{\sigma^2 \log(n)}{\kappa_{1, 1}^2}\right\}^{1/3}.
		\]
	\item  If
		\[
			\kappa_{1, 0}^2 n \gtrsim \sigma^2 \log(n) \gtrsim \kappa_{1, 1}^2 n,
		\]	
		then the localisation error rate is
		\[
			\frac{\sigma^2 \log(n)}{\kappa_{1, 0}^2}.
		\]
	\item If 	
		\[
			\min \{\kappa_{1, 1}^2 n, \, \kappa_{1, 0}^2 n\} \gtrsim \sigma^2 \log(n),
		\]	
		then the localisation error rate is
		\[
			\min\left\{n^{2/3} \left\{\frac{\sigma^2 \log(n)}{\kappa_{1, 1}^2}\right\}^{1/3}, \frac{\sigma^2 \log(n)}{\kappa_{1, 0}^2}\right\}.
		\]
	\end{itemize}
	\end{itemize}

Back to the the question we asked above, there is no simple answer that which case is simpler and one needs to carefully consider the different rates we discussed above.  But if one assume $\kappa^{-2}\sigma^2 \log(n) \asymp 1$, the continuous and discontinuous cases yield localisation rates as $O(n^{2/3})$ and $O(1)$, respectively.

\subsection{Global lower bounds}\label{sec-lb-state}

In this section, we aim to provide global information-theoretic lower bounds to characterise the fundamental difficulties of localisation change points in the model defined in \Cref{assume-model}.   By ``global'' we mean we do not assuming knowing further continuity conditions, in contrast to \Cref{sec-special-case} in the sequel.

In the change point analysis literature, in terms of localising the change point locations, there are two aspects we are interested in.  One is the minimax lower bound on the localisation error and the other is on the signal strength.  For simplicity, in this section, we assume that $K = 1$ and $r_1 = r$, using the notation defined in \eqref{eq-active-rk-kk}.

As for these two aspects, in \Cref{thm}, we show that provided
	\begin{equation}\label{eq-snr-upper-bound}
		\frac{\kappa^2 \Delta^{2r+1}}{n^{2r}} \gtrsim K \sigma^2 \log(n),
	\end{equation}
	the output returned by \Cref{algorithm:PDP} have localisation error upper bounded by
	\[
		\left\{\frac{n^{2r} \sigma^2 \log(n)}{\kappa^2}\right\}^{1/(2r+1)}.
	\]
	In this section, we will investigate the optimality of the above results.  
	
\begin{lemma}\label{lem-lb-2}
Under \Cref{assume-model}, assume that there exists one and only one change point and $r_1 = r$.  Let $P_{\kappa, \Delta, \sigma, r, n}$ denote the joint distribution of the data.  Consider the class
	\[
		\mathcal{Q} = \left\{P_{\kappa, \Delta, \sigma, r, n}: \, \Delta < n/2,  \, \kappa^2 \Delta^{2r+1} \geq \sigma^2 n^{2r} \zeta_n\right\},
	\]
	for any diverging sequence $\{\zeta_n\}$.  Then for all $n$ large enough, it holds
	\[
		\inf_{\widehat{\eta}} \sup_{P \in \mathcal{Q}} \mathbb{E}_P(|\widehat{\eta} - \eta(P)|) \geq \left[\frac{c n^{2r} \sigma^2}{ \kappa^2}\right]^{1/(2r+1)},
	\]
	where $\eta(P)$ is the location of the change point for distribution $P$, the minimum is taken over all the measurable functions of the data, $\widehat{\eta}$ is the estimated change point and $0 < c < 1$ is an absolute constant.
\end{lemma}

\Cref{lem-lb-2} shows that the final estimators provided by \Cref{algorithm:PDP} are nearly optimal, in terms of the localisation error, save for a logarithmic factor.  We remark that in \Cref{lem-lb-2}, we consider the class of distributions with the signal strength at order $r$ satisfies the signal-to-noise ratio condition \eqref{eq-snr-detailed}, and the order $r$ is used in the localisation error lower bounds.   We leave the proof of \Cref{lem-lb-2} in the appendix, but we provide some explanations of the proofs here.

We adopt Le Cam's lemma \citep[e.g.][]{yu1997assouad} to show the lower bound, and consider two explicit distributions when applying Le Cam's lemma.  One of these two distributions is $(r-1)$-time differentiable but not $r$-time differentiable.  The other distribution is not continuous.  This construction provides us a global minimax lower bound when $r_1 = r$.  For example, in the piecewise linear models, this includes both continuous and discontinuous cases, and the corresponding lower bound is of order
	\[
		n^{2/3} \left(\frac{\sigma^2}{\kappa^2}\right)^{1/3}.
	\]
	Combining with \eqref{eq-upppppppp}, we know \Cref{thm} is optimal saving a logarithmic factor. 

\begin{lemma}\label{lem-lb-1}
Under \Cref{assume-model}, assume that there exists one and only one change point and $r_1 = r$.  Let $P_{\kappa, \Delta, \sigma, r, n}$ denote the joint distribution of the data.  For a small enough $\xi > 0$, consider the class
	\[
		\mathcal{P} = \left\{P_{\kappa, \Delta, \sigma, r, n}: \, \Delta = \min \left\{\Bigg\lfloor \left(\frac{\xi n^{2r}}{\kappa^2 \sigma^{-2} }\right)^{1/(2r+1)} \Bigg\rfloor,\, n/3 \right\} \right\}.
	\]	
	Then we have
	\[
		\inf_{\widehat{\eta}} \sup_{P \in \mathcal{P}} \mathbb{E}_P(|\widehat{\eta} - \eta(P)|) \geq cn,
	\]
	where $\eta(P)$ is the location of the change point for distribution $P$, the minimum is taken over all the measurable functions of the data, $\widehat{\eta}$ is the estimated change point and $0 < c < 1$ is an absolute constant depending on $\xi$.
\end{lemma}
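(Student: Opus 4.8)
The plan is to prove the bound by Le Cam's two-point method, with the two hypotheses being piecewise polynomials whose single change point lies near opposite ends of $\{1,\dots,n\}$. The key idea is that when a change point is placed close to an endpoint, the polynomial piece on the short side has length at most $\Delta$, so a ``degree-$r$ jump'' with coefficient $\kappa$ placed on that short piece has global $\ell_2$-energy only of order $\kappa^2\Delta^{2r+1}n^{-2r}$, which the definition of $\mathcal{P}$ forces to be $O(\xi\sigma^2)$; shrinking $\xi$ then makes the two hypotheses statistically indistinguishable, while their change points remain $\Theta(n)$ apart.

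Concretely, take $\varepsilon_i\sim N(0,\sigma^2)$, a special case of the sub-Gaussian noise in \Cref{assume-model}. Define $f^{(1)}\in\mathcal{F}_n^{r,1}$ with change point $\eta^{(1)}=\Delta+1$ by $f^{(1)}(x)=\kappa\,(x-\eta^{(1)}/n)^{r}$ on the first piece and $f^{(1)}(x)=0$ on the second, and $f^{(2)}\in\mathcal{F}_n^{r,1}$ with change point $\eta^{(2)}=n+1-\Delta$ by $f^{(2)}(x)=0$ on the first piece and $f^{(2)}(x)=\kappa\,(x-\eta^{(2)}/n)^{r}$ on the second. In the reparameterisation of \Cref{def-jump-size} both have $\kappa_{1,r}=\kappa$ and $\kappa_{1,l}=0$ for $l<r$ (so the jump is purely of order $r$, consistent with $r_1=r$), and, since $\Delta\le n/3$, the effective sample size of each equals $\Delta$; hence $P_{f^{(1)}},P_{f^{(2)}}\in\mathcal{P}$. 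A direct computation gives
\[
	\|f^{(j)}\|^{2}\;=\;\kappa^{2}n^{-2r}\sum_{m=1}^{\Delta}m^{2r}\;\le\;C_r\,\kappa^{2}\Delta^{2r+1}n^{-2r}\;\le\;C_r\,\xi\sigma^{2},\qquad j=1,2,
\]
where $C_r$ depends only on $r$ and the last bound uses $\Delta^{2r+1}\le \xi n^{2r}\sigma^{2}/\kappa^{2}$ from the definition of $\mathcal{P}$.

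Because $\Delta\le n/3<n/2$, the index sets on which $f^{(1)}$ and $f^{(2)}$ are nonzero are disjoint, so $\|f^{(1)}-f^{(2)}\|^{2}=\|f^{(1)}\|^{2}+\|f^{(2)}\|^{2}\le 2C_r\xi\sigma^{2}$, and hence $\mathrm{KL}(P_{f^{(1)}}\,\|\,P_{f^{(2)}})=\|f^{(1)}-f^{(2)}\|^{2}/(2\sigma^{2})\le C_r\xi$. By Pinsker's inequality the total variation distance is at most $\sqrt{C_r\xi/2}$, hence at most $1/2$ for $\xi$ small. Le Cam's lemma \citep{yu1997assouad} then gives, for every estimator $\widehat{\eta}$,
\[
	\max_{j\in\{1,2\}}\mathbb{E}_{P_{f^{(j)}}}\big(|\widehat{\eta}-\eta^{(j)}|\big)\;\ge\;\frac{|\eta^{(1)}-\eta^{(2)}|}{2}\Big(1-\mathrm{TV}(P_{f^{(1)}},P_{f^{(2)}})\Big)\;\ge\;\frac{n-2\Delta}{4}\;\ge\;\frac{n}{12},
\]
since $|\eta^{(1)}-\eta^{(2)}|=n-2\Delta\ge n/3$. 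As both $P_{f^{(j)}}\in\mathcal{P}$, this yields the claim with $c=1/12$ for $\xi$ small enough (and $n$ large enough that $\Delta\ge1$); more generally one may take $c=(1-\sqrt{C_r\xi/2})/6$, which depends on $\xi$.

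The only genuinely delicate point is the construction: one must realise that the hardness here is not ``a jump too small to be seen'' in the usual sense, but ``a jump pushed into a short boundary segment so that its global energy — which is what governs the likelihood ratio — is $O(\xi\sigma^2)$'', while still permitting the admissible change-point locations to be separated by $\Theta(n)$. Everything afterwards (checking class membership, the elementary estimate $\sum_{m\le\Delta}m^{2r}\lesssim_r\Delta^{2r+1}$, the disjoint-support identity, Pinsker, and Le Cam) is routine.
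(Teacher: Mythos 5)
Your proposal is correct and follows essentially the same route as the paper's proof: the same two-point construction with the degree-$r$ polynomial placed on the short boundary segment of length $\Delta$ (change points at $\Delta+1$ and $n-\Delta+1$), the same observation that disjoint supports and $\kappa^2\Delta^{2r+1}n^{-2r}\le\xi\sigma^2$ force $\mathrm{KL}\lesssim_r\xi$, and Le Cam's two-point lemma to convert this into an $\Omega(n)$ lower bound. The only cosmetic difference is that you pass from KL to TV via Pinsker's inequality (which needs $\xi$ small), whereas the paper uses the exponential form $1-d_{\mathrm{TV}}\ge\tfrac12\exp(-\mathrm{KL})$ from Tsybakov's Lemma 2.6; both yield the same conclusion.
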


\Cref{lem-lb-1} shows that, if $\kappa^2 \Delta^{2r+1}n^{-2r} \lesssim \sigma^2$, then no algorithm is guaranteed to be consistent, in the sense that
	\[
		\inf_{\widehat{\eta}} \sup_{P \in \mathcal{P}} \mathbb{E}_P\left(\frac{|\widehat{\eta} - \eta(P)|}{n}\right) \gtrsim 1.
	\]
	This means, besides the logarithmic factor, \Cref{lem-lb-1} and \Cref{thm}  leave a gap in terms of $K$.  To be specific, it remains unclear what results one would obtain if 
	\begin{equation}\label{eq-gapppp}
	\sigma^2 \lesssim \kappa^2 \Delta^{2r+1}n^{-2r} \lesssim K\sigma^2.
	\end{equation}  
	This gap only exists when we allow $K$ to diverge.   We will provide some conjectures inline with this discussion in \Cref{sec-comparisons}.

\subsection{A special case}\label{sec-special-case}
	
In \Cref{sec-lb-state} we have shown the global minimax lower bound on the localisation error.  In \cref{lem-lb-3333} below, we provide a minimax lower bound in a smaller class.
	
\begin{lemma}\label{lem-lb-3333}
Under \Cref{assume-model}, assume that there exists one and only one change point and $r_1 = r \geq 1$.  Let $P_{\kappa, \Delta, \sigma, r, n}$ denote the joint distribution of the data.  Consider the class
	\begin{align*}
		\mathcal{Q}_1 = \big\{P_{\kappa, \Delta, \sigma, r, n}: \, \Delta < n/2,  &\, \kappa^2 \Delta^{2r+1} \geq \sigma^2 n^{2r} \zeta_n\\
		& \mbox{ and } a_{1, l} = b_{1, l}, \, l = 0, \ldots, r - 1\big\},
	\end{align*}
	for any diverging sequence $\{\zeta_n\}$.  Then for all $n$ large enough, it holds
	\[
		\inf_{\widehat{\eta}} \sup_{P \in \mathcal{Q}_1} \mathbb{E}_P(|\widehat{\eta} - \eta(P)|) \geq \left[\frac{c n \sigma^2}{ \kappa^2}\right]^{1/2},
	\]
	where $\eta(P)$ is the location of the change point for distribution $P$, the minimum is taken over all the measurable functions of the data, $\widehat{\eta}$ is the estimated change point and $0 < c < 1$ is an absolute constant.
\end{lemma}

Comparing Lemmas~\ref{lem-lb-2} and \ref{lem-lb-3333}, we notice that $\mathcal{Q}_1$ the class of distributions considered in \Cref{lem-lb-3333} is strictly smaller than $\mathcal{Q}$ the class of distributions considered in \Cref{lem-lb-2}.  In $\mathcal{Q}_1$, we enforce that the underlying polynomials are $(r-1)$-time differentiable.  We leave the proof of \Cref{lem-lb-3333} in the appendix, but we highlight some key ingredients here.  We again adopt Le Cam's lemma in deriving the lower bound, but different from the construction used in the proof of \Cref{lem-lb-2}, the two explicit functions we choose are both $(r-1)$-time differentiable.  

Apparently, the localisation lower bound provided in \Cref{lem-lb-3333} is sharper than the one in \Cref{lem-lb-2}.  This is not surprising, since $\mathcal{Q}_1 \subset \mathcal{Q}$.  What is seemingly surprising is that the lower bound is not a function of any polynomial order.  This is gained by knowing the fact that $a_{1, l} = b_{1, l}$, $l \in \{0, \ldots, r-1\}$.
	
In \cite{chen2020jump}, similar results were obtained but only for the piecewise linear case.  To match this lower bound, \cite{chen2020jump} proposed a super-efficient estimator, which assumes that it is known the piecewise linear models are continuous.  The super-efficient estimator is essentially a penalised estimator, which forces the intercept estimators to equal, if their difference is not very large.  One can straightforwardly extend the idea there to the class $a_{k, l} = b_{k, l}$, $l \in \{0, \ldots, r_k-1\}$, but $a_{k, r_k} \neq b_{k, r_k}$, for any $k \in \{1, \ldots, K\}$.  Enforcing the corresponding polynomial coefficient estimators to equal before and after each change point estimator, knowing the exact smoothness at every individual true change point, will prompt a localisation error of order detailed in \Cref{lem-lb-3333}.  We would refrain from proposing such an effort, since in our paper, we allow for multiple change points and allow for individual smoothness levels.  This will end up with $\sum_{k = 1}^K r_k$ more tuning parameters.

\section{Discussions} \label{sec-discussion}

In this paper, we investigate the change point localisation in piecewise polynomial signals.  We allow for a general framework and provide individual localisation error, associated with the individual smoothness at each change point.  A two-step algorithm consisting of solving a minimal partitioning problem and an updating step is proposed.  The outputs are shown to be nearly-optimal, supported by the information-theoretic lower bounds.  To conclude this paper, we discuss some unresolved aspects of our work while comparing our results to some particularly relevant existing literature.  Readers who are less familiar with the change point literature may safely skip this section.

\subsection{Comparisons with \cite{wang2020univariate}}\label{sec-comparisons}

\cite{wang2020univariate} studied change point localisation in piecewise constant signals.  They studied the $\ell_0$-penalised least squares method and proved that it is nearly minimax optimal in terms of both the signal strength condition and the localisation error.  In contrast, with our proof technique, we have been able to generalise this result for higher degree polynomials up to a factor depending on $K$, the number of true change points.  This can be seen in our change point localisation error bound of our initial estimators as provided in \Cref{thm} and also in our required signal strength condition in \eqref{eq-snr-detailed}.  In our paper, with general degree polynomials, the localisation near-optimality is secured via an extra updating step, and a gap remains in the upper and lower bounds for our required signal strength condition. This gap is not present if $K$ is assumed to be $O(1)$ but is present if it is allowed to diverge. 

We explain why the proof in~\cite{wang2020univariate} could not be fully generalised to our setting.  Recall the definition of $H(v, I)$ in \eqref{eq-def-G-loss} denoting a residual sums of squares term.  In our analysis, a crucial role is played by the term
	\[
		Q\{\mathbb{E}(y); I_1, I_2\} = H\{\mathbb{E}(y), I_1 \cup I_2\} - H\{\mathbb{E}(y), I_1\} - H\{\mathbb{E}(y), I_2\},
	\]
	where $I_1, I_2$ are two contiguous intervals of $\{1, \ldots, n\}$.   Ideally, one needs to be able to upper and lower bound $Q\{\mathbb{E}(y); I_1, I_2\}$ when $y$ is defined in \eqref{eq-y-intro}, and its corresponding $f(\cdot)$ is a degree-$r$ polynomial on $I_1$ and another degree-$r$ polynomial on $I_2$.   In the case of $r = 0$, i.e.~in the piecewise constant case, one can write an exact expression 
	\[
		Q\{\mathbb{E}(y); I_1, I_2\} = \frac{|I_1||I_2|}{|I_1| + |I_2|} \left(|I_1|^{-1}\sum_{i \in I_1} \mathbb{E}(y_i) - |I_2|^{-1}\sum_{i \in I_2} \mathbb{E}(y_i)\right)^2.
	\]
	In addition, it holds that
	\[
		\frac{\min\{|I_1|, \, |I_2|\}}{2} = \frac{|I_1||I_2|}{2 \max\{|I_1|, \, |I_2|\}} \leq \frac{|I_1||I_2|}{|I_1| + |I_2|} \leq \min\{|I_1|, \, |I_2|\}.
	\]
	Therefore, it follows that 
	\begin{equation}\label{eq-chain-inequ-disc}
		\frac{1}{2} \min\{|I_1|, \, |I_2|\} \kappa^2 \leq Q\{\mathbb{E}(y); I_1, I_2\} \leq \min\{|I_1|, \, |I_2|\} \kappa^2,
	\end{equation}
	where $\kappa$ represents the absolute difference between the values of $\mathbb{E}(y_i)$, $i \in I_1$ and $i \in I_2$.

For general $r$, by adopting an elegant result in \cite{shen2020phase}, one can actually generalise \eqref{eq-chain-inequ-disc} to obtain that
	\begin{equation}\label{eq-Q-lack-upper}
		C_1 \frac{\min\{|I_1|^{2r + 1}, \, |I_2|^{2r + 1}\}}{n^{2r}}\kappa^2 \leq Q\{\mathbb{E}(y); I_1, I_2\} \leq C_2 \frac{\min\{|I_1|^{2r+1}, \, |I_2|^{2r+1}\}}{n^{2r}} \kappa^2,
	\end{equation}
	where $0 < C_1 < C_2$ are two absolute constants, and $\kappa$ is the absolute difference of the $r$th degree coefficients of $\mathbb{E}(y)$ on $I_1$ and $I_2$.  However, the problem is that the constants $C_1$ and $C_2$ are not explicit.  We can only show the existence of such constants.  Even if we can track these two constants down, in order to be able to generalise the argument of \cite{wang2020univariate}, we would still need to show that $C_1$ and $C_2$ are close enough.  At this moment, it is not clear to us how to resolve this issue.  We can only conjecture that for all $r \in \mathbb{N}$, the $\ell_0$-penalised least squares method would itself be nearly optimal in terms of both the signal strength condition and the localisation error, and our second step update would not be needed. From a practical point of view, our second step can be done in $O(n)$ time, which is negligible compared to the $O(n^3)$ time required to solve the penalised least squares.  The computational overhead of our second step is thus minor. 

\subsection{Comparisons with \cite{fearnhead2019detecting}}\label{sec-comparisons2}
 
\cite{fearnhead2019detecting} showed that penalised least squares method for change point localisation works well for piecewise linear signals.  This work inspired us to investigate piecewise polynomial signals of higher degrees.  Even in the piecewise linear case, there are some differences between our work and \cite{fearnhead2019detecting}.  The algorithm provided in \cite{fearnhead2019detecting} can be seen as solving a variant of the penalised least squares problem mentioned in this paper.  In fact, the dynamic programming algorithm mentioned in \cite{fearnhead2019detecting} appears to be more sophisticated than what would be required to solve our problem.  It is because the algorithm in~\cite{fearnhead2019detecting} is tailored specifically for continuous piecewise linear functions.  Maintaining continuity makes the dynamic programming algorithm more involved. Translated to our notation, \cite{fearnhead2019detecting} assumes $r_k = 1$, for all $k \in \{1, \ldots, K\}$.  Our formulation is more general than \cite{fearnhead2019detecting} as we do not impose continuity or any kind of smoothness at the change points.  Our estimator adapts near-optimally to the level of smoothness at the change points. The theoretical results studied in \cite{fearnhead2019detecting} are under the conditions $K, \sigma \asymp 1$.  Under these conditions, translated to our notation, their results read, provided that $(\kappa/n)^2 \Delta^3 \gtrsim \log(n)$, the localisation error is $\log^{1/3}(n) (n/\kappa)^{2/3}$.  Both are consistent with the results we have obtained in this paper.  

We would like to emphasis that when the underlying functions are indeed continuous at the change points, our estimators may be discontinuous, but our estimators will be very close to continuous functions; in the sense that $\mathrm{MSE}(\widehat{\theta}, \theta^*) \to 0$, $n \to \infty$, where $\widehat{\theta} = (\widehat{f}(i/n), i = 1, \ldots, n)^{\top}$ and $\theta^* = (f^*(i/n), i = 1, \ldots, n)^{\top}$; see \cite{shen2020phase}.

\subsection{Comparisons with  \cite{raimondo1998minimax}}\label{sec-other-comp}

\cite{raimondo1998minimax} studied the minimax rates of change point localisation in a nonparametric setting.  The main focus there is how the localisation errors' minimax rates change with $\alpha$, the degree of discontinuity in a H\"{o}lder sense.  Due to the nonparametric essence, the class of functions considered in \cite{raimondo1998minimax} is more general than the piecewise polynomial class we discuss here.  However, the measures of regularity $r_k$'s we have defined in \Cref{def-jump-size} are similar as the parameter $\alpha$ in \cite{raimondo1998minimax}, if we only consider polynomials.  Having drawn this connection, translated into our notation, \cite{raimondo1998minimax} in fact shows that the localisation error's minimax lower bound is of order 
	\[
		\left\{n^{2r}\log^{\eta}(n)\right\}^{1/(2r+1)}, \quad \forall \eta > 1.
	\]
	This is a lower bound for a larger class of functions than ours, but the dependence on $n$ is the same as ours up to a poly-logarithmic factor. In general, the larger the class is, the smaller the minimax lower bound is.  Since \cite{raimondo1998minimax} assumes all the other parameter to be of order $O(1)$, our minimax lower bounds add value as they are in terms of all the relevant problem parameters and not just the sample size $n$.

\subsection{Why not just differencing the sequences}

In this paper, we are dealing with piecewise polynomials with general order $r$. We noticed that in practice, some practitioners tend to difference the sequences $r$ times, wishing to obtain piecewise constant signals, and then conduct change point detection methods on the resulting differenced sequence.  This is in fact not an effective method if the goal is to detect change points. 

We use piecewise linear models as concrete examples, assuming we have
	\[
		f(x) = \begin{cases}
			a_0 + a_1 (x-\eta/n), & x \in [0, \eta/n), \\
 			a_0 + b_1 (x-\eta/n), & x \in [\eta/n, 1),
 		\end{cases}
	\]
	where $a_0, a_1, b_1 \in \mathbb{R}$ and $a_1 \neq b_1$.  As we have shown, the global and constrained minimax lower bounds on this problem are 
	\begin{equation}\label{eq-wwwwwww}
		\left(\frac{\sigma^2 }{(a_1 - b_1)^2}\right)^{1/3} n^{2/3} \quad \mbox{and} \quad \left(\frac{\sigma^2 }{(a_1 - b_1)^2}\right)^{1/2} n^{1/2}, 
	\end{equation}
	respectively.
	
If we now take differences, then we work under a new model 
	\[
		g(x) =  \begin{cases}
			a_1/n, & x \in [0, \eta/n), \\
 			b_1/n, & x \in [\eta/n, 1).
 		\end{cases}
	\]	
	This is now a piecewise constant case, the localisation error lower bound is now of order
	\begin{equation}\label{eq-wwwwvfvv}
		\frac{\sigma^2}{(a_1/n-b_1/n)^2} = \frac{n^2\sigma^2}{(a_1-b_1)^2},
	\end{equation}
	provided that the signal strength is still strong enough.  (The differenced sequence is no longer independent, but weakly dependent.  Therefore the variance parameter is inflated by a constant.)
	
Comparing the rates in \eqref{eq-wwwwvfvv} and \eqref{eq-wwwwvfvv}, we show that it is not always a good idea to difference the polynomial sequences.

\section{Numerical experiments} \label{sec-numeric}

In this section, we conduct extensive numerical experiments, based on piecewise quadratic functions.

\medskip
\noindent \textbf{Evaluation measurements.}  Letting $\{\widehat{\eta}_k\}_{k = 1}^{\widehat{K}}$ and $\{\eta_k\}_{k = 1}^K$ be estimated and true change points, respectively, we evaluate the performances of $\{\widehat{\eta}_k\}_{k = 1}^{\widehat{K}}$ using $|\widehat{K} - K|$ and the scaled Hausdorff distance, i.e.
	\begin{align*}
    	d_{\mathrm{H}} = \frac{1}{n}\max\Big\{\max_{j = 0, \dots \widehat{K}+1}\min_{k = 0, \dots, K+1}|\widehat{\eta}_{j} - \eta_k|, \, \max_{k = 0, \dots , K+1} \min_{j = 0, \dots, \widehat{K}+1} |\widehat{\eta}_{j} - \eta_k| \Big\},
	\end{align*}
	where $\widehat{\eta}_0 = \eta_0 = 0$ and $\widehat{\eta}_{\widehat{K}+1} = \eta_{K+1} = n$.

\medskip
\noindent \textbf{Tuning parameter selection.}   The only tuning parameter $\lambda$ is selected via the cross-validation method \cite{rinaldo2021localizing}.  To be specific, we first divide the sample into training and validation sets according to odd and even indices.  For each possible values of $\lambda$ considered, the initial estimator $\widehat{\Pi}$ is obtained based on the training set. On the validation set, for each $I \in \widehat{\Pi}$, we obtain $\hat{y}_{I} = P_{I}y_{I}$ and compute the validation loss $\sum_{t \mod 2 \equiv 0}(\hat{y}_t - y_t)^2$. Finally, we select the $\lambda$ which minimises the validation loss.

\medskip
\noindent \textbf{General settings.}  With the notation in \Cref{def-jump-size}, $f(x)$ on the interval $[\eta_{k}/n, \eta_{k+1}/n)$, for $k \in \{1, \dots, K-1\}$, are represented by different polynomials $\{(x - \eta_k/n)^l\}_{l = 0}^r$ and $\{(x - \eta_{k+1}/n)^l\}_{l = 0}^r$, with coefficients $\{b_{k,l}\}_{l = 0}^r$ and $\{a_{k+1,l}\}_{l=0}^r$ respectively.  To be specific, for $r = 2$, we have
	\[
		\begin{pmatrix}
		a_{k+1,0} \\
		a_{k+1,1} \\
		a_{k+1,2}
		\end{pmatrix} = \begin{pmatrix}
			1 & \frac{\eta_{k+1} - \eta_{k}}{n} & \big(\frac{\eta_{k+1} - \eta_{k}}{n}\big)^2 \\
			0 & 1 & 2\frac{\eta_{k+1} - \eta_{k}}{n} \\ 
			0 & 0 & 1
		\end{pmatrix} \begin{pmatrix}
			b_{k,0} \\
			b_{k,1} \\ 
			b_{k,2}
		\end{pmatrix},
	\]
	The piecewise polynomials $f(x)$ can therefore be parameterised by the degree $r$, the change points $\{\eta_k\}_{k = 1}^K$, the sample size $n$, the coefficients $\{a_{1,l}\}_{l = 0}^r$ for the first segment, the jump sizes $\{\kappa_{k,l}\}_{l = 0}^r$ for $k = 1, \dots, K$, and $\sigma^2$ which quantifies the tail behavior of error terms.

For each setting below, we simulate $100$ repetitions and fix $K = 2$. Fixing the effects of $K$ and $\sigma^2$, the localisation errors shown in \Cref{thm} can be regarded as an interplay among $\Delta$, $r_k$, $n$ and $\rho_{k, r_k}$; see \eqref{eq-local-rate}.

\subsection{Scenario 1: The effects of $r_k$ and $n$}

In this scenario, we investigate the roles of $r_k$ and $n$, with equally-spaced change points.  We fix the polynomial coefficients for the first segment as $\{a_{1,l}\}_{l = 0}^2 = \{-2, 2, 9\}$, and the jumps at the first change point as $\{\kappa_{1,l}\}_{l = 0}^2 = \{3, 9, -27\}$.  We thus have $r_1 = 0$ and $\rho_{1, r_1}/n = 3$ fixed.  We further let $n \in \{150, 300, 450\}$, and for the second change point, let $\{\kappa_{2,l}\}_{l = 0}^2$ vary according to (a): $\{-3, 9, -27\}$, (b): $\{-3, 9, 0\}$, (c): $\{-3, 0, -27\}$, (d): $\{-3, 0, 0\}$, (e): $\{0, 9, -27\}$, (f): $\{0, 9, 0\}$ and (g): $\{0, 0, -27\}$.  By varying $n$ and $\{\kappa_{2,l}\}_{l = 0}^2$, the ratio $\rho_{2, r_2}/n = 3$ is fixed, but $r_2 \in \{0, 1, 2\}$ varies. Under the above settings, we have the localisation errors of $\eta_2$ dominates that of $\eta_1$, and \eqref{eq-local-rate} for $\eta_2$ reduces to
	\[
	    \frac{|\widehat{\eta}_2 - \eta_2|}{n} \lesssim \frac{1}{3}\bigg\{ \frac{K\sigma^2\log(n)}{\rho_{2, r_2}} \bigg\}^{1/(2r_2 + 1)} = \frac{1}{3}\bigg\{ \frac{2\log(n)}{3n} \bigg\}^{1/(2r_2 + 1)},
	\]
	which suggests the following.  First, fixing $n$, the localisation error increases as $r_2$ increases; second, fixing $r_2$, the localisation error decreases as $n$ increases.  This is supported by the results collected in \Cref{tab_sce1_1}.  For fixed $n$ and $\Delta$, our method performs similarly for Cases (a)-(d) with the same $r_2 = 0$, and the performances deteriorate as $r_2$ increases.  In each case, the performances improve as $n$ increases.  We would like to mention that, when $r_2 = 2$, with a much larger signal strength, we can show a similarly good performance as that in the case $r_2 = 0$.

\begin{table}[!ht]
\centering
\caption{Experiment results of Scenario 1.  Each cell is in the form of mean (std.~error) over $100$ repetitions.  }
\label{tab_sce1_1}
\begin{tabular}{c|ccccccc}
\hline
$\{\kappa_{2,l}\}_{l = 0}^2$ & (a)   & (b)   & (c)   & (d)   & (e)   & (f)   & (g)   \\
$r_2$           & 0     & 0     & 0     & 0     & 1     & 1     & 2     \\ \hline
               & \multicolumn{7}{c}{$n = 150$}          \\
$|\widehat{K} - K|$        & 0.42 & 0.40 & 0.44 & 0.42 & 0.96 & 0.99 & 1.02 \\
	& (0.083) & (0.083) & (0.086) & (0.085) & (0.051) & (0.044) & (0.051) \\
$d_{\mathrm{H}}$ of $\widetilde{\eta}$        & 0.057 & 0.049 & 0.055 & 0.049 & 0.274 & 0.289 & 0.292 \\
	& (0.008) & (0.006) & (0.007) & (0.006) & (0.010) & (0.009) & (0.008) \\
$d_{\mathrm{H}}$ of $\widehat{\eta}$         & 0.049 & 0.042 & 0.048 & 0.041 & 0.270 & 0.289 & 0.290 \\
	& (0.008) & (0.006) & (0.007) & (0.006) & (0.010) & (0.008) & (0.008) \\ \hline
               & \multicolumn{7}{c}{$n = 300$}          \\
$|\widehat{K} - K|$        & 0.20 & 0.20 & 0.20 & 0.20 & 0.72 & 0.95 & 0.82 \\
	& (0.047) & (0.047) & (0.047) & (0.047) & (0.055) & (0.069) & (0.046) \\
$d_{\mathrm{H}}$ of $\widetilde{\eta}$        & 0.025 & 0.025 & 0.025 & 0.025 & 0.252 & 0.296 & 0.281 \\
	& (0.004) & (0.004) & (0.004) & (0.004) & (0.012) & (0.008) & (0.009) \\
$d_{\mathrm{H}}$ of $\widehat{\eta}$         & 0.022 & 0.022 & 0.022 & 0.022 & 0.248 & 0.290 & 0.274 \\
	& (0.004) & (0.004) & (0.004) & (0.004) & (0.012) & (0.009) & (0.010) \\
 \hline
               & \multicolumn{7}{c}{$n = 450$}         \\
$|\widehat{K} - K|$        & 0.13 & 0.13 & 0.13 & 0.13 & 0.72 & 0.88 & 0.73 \\
	& (0.034) & (0.034) & (0.034) & (0.034) & (0.070) & (0.033) & (0.053) \\
$d_{\mathrm{H}}$ of $\widetilde{\eta}$        & 0.017 & 0.017 & 0.017 & 0.017 & 0.230 & 0.313 & 0.257 \\
	& (0.004) & (0.004) & (0.004) & (0.004) & (0.012) & (0.006) & (0.011) \\
$d_{\mathrm{H}}$ of $\widehat{\eta}$         & 0.014 & 0.014 & 0.014 & 0.014 & 0.231 & 0.310 & 0.256 \\
	& (0.003) & (0.003) & (0.003) & (0.003) & (0.011) & (0.006) & (0.010) \\ \hline
\end{tabular}
\end{table}

\subsection{Scenario 2: The effect of $\Delta$}

In this scenario, we vary the minimal spacing $\Delta$ and consider un-balanced change points.  We let $n = 450$, $r = 2$, the polynomial coefficients of the first segment be $\{a_{1,l}\}_{l = 0}^2 = \{-2, 2, 9\}$, the jump sizes at the first change point be $\{\kappa_{1,l}\}_{l = 0}^2 = \{3, 9, -27\}$ and the jump sizes at the second change point be $\{\kappa_{2,l}\}_{l = 0}^2 = \{-3, 9, -27\}$.  

We consider the following six cases of the true change points $\{\eta_k\}_{k=1}^2$ as: (a): $\{50, 100\}$, (b): $\{50, 150\}$, (c): $\{50, 400\}$, (d): $\{100, 350\}$, (e): $\{150, 300\}$ and (f): $\{200, 250\}$.  The lengths of three intervals separated by true change points are (a): $(50, 50, 350)$, (b): $(50, 100, 300)$, (c): $(50, 350, 50)$, (d): $(100, 250, 100)$, (e): $(150, 150, 150)$ and (f): $(200, 50, 200)$.   By fixing $n$ and the jump sizes, we ensure $r_1 = r_2 = 0$ is unchanged for all cases.  

The results collected in \Cref{tab_sce2} show that, keeping other factors unchanged, the more balanced the locations of change points are, the better the performance of our estimator.

\begin{table}[ht]
\centering
\label{tab_sce2}
\caption{Experiment results of Scenario 2.  Each cell is in the form of mean (std.~error) over $100$ repetitions. }
\begin{tabular}{c|cccccc}
\hline
$\{\eta_k\}_{k = 0}^2$ & (a)   & (b)   & (c)   & (d)   & (e)   & (f)\\ \hline
$|\widehat{K} - K|$        & \bf{0.12} & 0.26 & 0.20 & 0.17 & 0.13 & 0.26 \\
	& (0.038) & (0.116) & (0.055) & (0.043) & (0.034) & (0.066) \\
$d_\mathrm{H}$ of $\widetilde{\eta}$        & 0.025 & 0.022 & 0.030 & 0.019 & \bf{0.017} & 0.030 \\
	& (0.006) & (0.005) & (0.008) & (0.004) & (0.004) &  (0.005) \\
$d_\mathrm{H}$ of $\widehat{\eta}$         & 0.024 & 0.020 & 0.034 & 0.018 & \bf{0.014} & 0.027 \\
	& (0.007) & (0.005) & (0.008) & (0.005) & (0.004) & (0.005) \\
 \hline
\end{tabular}
\end{table}

\appendix

\section{Summary}
We include all the proofs in the Appendices.  Some preparatory results are provided in \Cref{sec-proofs}.  \Cref{sec-proofs-main} contains the proof of \Cref{thm}.  The lower bounds results Lemmas~\ref{lem-lb-2} and \ref{lem-lb-1} are proved in \Cref{sec-proofs-lb}.

\section{Preparatory Results}\label{sec-proofs}

The following notation will be used throughout the proofs. For any $I = \{s, \ldots, e\} \subset \{1, \ldots, n\}$, recall the projection matrix $P_I$ defined in \eqref{eq-P-I-definition} using matrix $U_{I, r}$ defined in \eqref{eq-U-i-r}.  We recall the notation
	\[
		H(v, I) = \|v_I\|^2 - \|P_I v_I\|^2 = \|v_I - P_I v_I\|^2,
	\]
	for any vector $v \in \mathbb{R}^n$, where $v_I = (v_i, i \in I)^{\top} \in \mathbb{R}^{|I|}$.

For any contiguous intervals $I, J \subset \{1, \ldots, n\}$ and for any vector $v \in \mathbb{R}^n$, define
	\[
		Q(v; I, J) = H(v, I \cup J) - H(v, I) - H(v, J) = \|P_I v_I\|^2 + \|P_J v_J\|^2 - \|P_{I \cup J} v_{I \cup J}\|^2.
	\]
	
\begin{lemma}\label{lem:1}
Let $I$ be any nonempty interval subset of $\{1, \ldots, n\}$.  For any $k \in \{1, \ldots, |I|\}$ and any partition of $I$, $I = \cup_{l = 1}^k I_l$, satisfying $I_s \cap I_u = \emptyset$, for any $s, u \in \{1, \ldots, k\}$, $s \neq u$.  It holds for any vector $v \in R^n$ that
	\[
		H(v, I) \geq \sum_{l = 1}^k H(v, I_l).
	\]
\end{lemma}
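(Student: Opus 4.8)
The plan is to rewrite the residual functional $H(v,\cdot)$ as the value of a least-squares problem over the class of low-degree polynomials, and then observe that restricting a single global fit to a subcollection of indices can only hurt the fit on each piece. Concretely, I would first record that for every nonempty $J \subset \{1,\dots,n\}$,
\[
	H(v, J) = \min_{\beta \in \R^{r+1}} \bigl\|v_J - U_{J,r}\beta\bigr\|^2 = \min_{g \in \G_r} \sum_{i \in J}\bigl(v_i - g(i/n)\bigr)^2,
\]
where $\G_r$ denotes the set of real polynomials of degree at most $r$; this is immediate from the fact that $P_J$ is the orthogonal projection onto the column space of $U_{J,r}$, whose rows are exactly the polynomial-evaluation vectors at the nodes $\{i/n : i \in J\}$.

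Next, let $\widehat g \in \G_r$ be a minimiser for $J = I$, so that $H(v,I) = \sum_{i \in I}(v_i - \widehat g(i/n))^2$. Since $I = \bigcup_{l=1}^k I_l$ is a disjoint partition, this sum decomposes as $\sum_{l=1}^k \sum_{i \in I_l}(v_i - \widehat g(i/n))^2$, and bounding each inner sum below by its minimum over $g \in \G_r$ yields
\[
	H(v,I) = \sum_{l=1}^k \sum_{i \in I_l}\bigl(v_i - \widehat g(i/n)\bigr)^2 \geq \sum_{l=1}^k \min_{g \in \G_r} \sum_{i \in I_l}\bigl(v_i - g(i/n)\bigr)^2 = \sum_{l=1}^k H(v, I_l),
\]
which is exactly the claim. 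The inequality is termwise and uses only that $\widehat g$ is one admissible competitor in each piecewise minimisation.

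I do not anticipate a genuine obstacle; the only two points warranting a sentence of care are, first, that the identification of $P_J$ with the orthogonal projection onto $\mathrm{col}(U_{J,r})$ — hence the least-squares reformulation above — must be read to cover the degenerate case $|J| \le r$, in which $U_{J,r}$ lacks full column rank but $v_J$ automatically lies in $\mathrm{col}(U_{J,r})$ and $H(v,J) = 0$; and second, that the partition pieces $I_l$ need not be contiguous intervals, so I would argue directly as above rather than routing through the quantity $Q(v;\cdot,\cdot)$, whose definition presupposes contiguity. If desired, the statement also follows by induction on $k$ from the two-piece special case, but the direct argument given here is shorter and sidesteps the contiguity issue entirely.
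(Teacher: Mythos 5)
Your proof is correct and is essentially the same argument as the paper's: the paper writes $H(v,I)=\|v_I-P_Iv_I\|^2=\sum_{l=1}^k\|v_{I_l}-(P_Iv_I)_{I_l}\|^2\geq\sum_{l=1}^k\|v_{I_l}-P_{I_l}v_{I_l}\|^2$, which is precisely your observation phrased via projection matrices — $(P_Iv_I)_{I_l}$ is the restriction of the global polynomial fit $\widehat g$ to $I_l$, hence a feasible competitor in the least-squares problem on $I_l$. Your remark about the rank-deficient case $|J|\le r$ and about non-contiguous partition pieces is a sensible bit of extra care, though the paper's projection-matrix phrasing already handles both without comment.
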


\begin{proof}
The claims holds due to that 
	\[
		H(v, I) = \|v_I - P_I v_I\|^2 = \sum_{l = 1}^k \|v_{I_l} - (P_I v_I)_{I_l}\|^2 \geq \sum_{l = 1}^k \|v_{I_l} - P_{I_l} v_{I_l}\|^2.
	\]
\end{proof}

\begin{lemma}\label{lem:2}
Let $y \in \mathbb{R}^n$ satisfy $y = \theta + \varepsilon$ and $\mathbb{E}(y) = \theta$.  Let $I, J$ be two contiguous interval subsets of $\{1, \ldots, n\}$.   It holds that
	\[
		Q(y; I, J) \geq \big|\sqrt{Q(\theta; I, J)} - \sqrt{Q(\varepsilon; I, J)}\big|^2.
	\]
\end{lemma}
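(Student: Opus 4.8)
We want to show $Q(y; I, J) \geq |\sqrt{Q(\theta; I, J)} - \sqrt{Q(\varepsilon; I, J)}|^2$, where $y = \theta + \varepsilon$. The natural route is to recognize $Q(\cdot; I, J)$ as the square of a seminorm, so that a reverse triangle inequality applies. Recall from the definition that
\[
Q(v; I, J) = H(v, I \cup J) - H(v, I) - H(v, J).
\]
The key structural observation I would establish first is that $\sqrt{Q(v;I,J)}$ is a seminorm in $v$ — concretely, that $Q(v;I,J) = \|A v_{I\cup J}\|^2$ for some fixed matrix $A$ depending only on $I$, $J$ and $r$. Indeed, writing $v_{I\cup J}$ and using the three projection matrices $P_I$, $P_J$, $P_{I\cup J}$, one has
\[
Q(v;I,J) = \|P_I v_I\|^2 + \|P_J v_J\|^2 - \|P_{I\cup J} v_{I\cup J}\|^2.
\]
Since the column space of $U_{I\cup J, r}$ (restricted to the $I$-rows, resp. $J$-rows) contains the column spaces of $U_{I,r}$ and $U_{J,r}$ respectively — polynomials of degree $\le r$ on $I\cup J$ restrict to polynomials of degree $\le r$ on each piece — we have $P_I (P_{I\cup J} v)_I = (P_{I\cup J}v)_I$ and likewise on $J$. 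Expanding $H(v, I\cup J) = \|v_{I\cup J}\|^2 - \|P_{I\cup J}v_{I\cup J}\|^2$ and similarly for the pieces, and using that $\|v_{I\cup J}\|^2 = \|v_I\|^2 + \|v_J\|^2$, a short computation shows $Q(v;I,J)$ equals the squared norm of the orthogonal projection of $v_{I\cup J}$ onto $\mathrm{col}(U_{I\cup J,r})$ minus the squared norm of its projection onto the (smaller) subspace of sequences that are degree-$\le r$ polynomials separately on $I$ and on $J$. That difference is $\|\Pi v_{I\cup J}\|^2$ where $\Pi$ is the orthogonal projection onto the orthogonal complement of the smaller space inside the larger — hence $Q(v;I,J) = \|\Pi v_{I\cup J}\|^2 \ge 0$, and $v \mapsto \sqrt{Q(v;I,J)} = \|\Pi v_{I\cup J}\|$ is a seminorm.

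Once this is in hand, the lemma follows immediately: with $y_{I\cup J} = \theta_{I\cup J} + \varepsilon_{I\cup J}$ and $\Pi$ linear,
\[
\sqrt{Q(y;I,J)} = \|\Pi(\theta_{I\cup J} + \varepsilon_{I\cup J})\| \ge \big|\, \|\Pi\theta_{I\cup J}\| - \|\Pi\varepsilon_{I\cup J}\|\,\big| = \big|\sqrt{Q(\theta;I,J)} - \sqrt{Q(\varepsilon;I,J)}\big|,
\]
by the reverse triangle inequality, and squaring both sides gives the claim.

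The main obstacle is the first step: verifying cleanly that $Q(v;I,J)$ is a genuine nonnegative quadratic form of the stated type, i.e., identifying the relevant nested subspaces and checking $P_I (P_{I\cup J}v)_I = (P_{I\cup J}v)_I$ (and the $J$-analogue) so that the cross terms collapse correctly. This is really the algebraic heart of the matter; the probabilistic content is nil. An alternative, perhaps slicker, way to phrase the same argument is to note directly that $Q(v;I,J) = H(v,I\cup J) - \sum_{L\in\{I,J\}} H(v,L)$ with $H(v,L)$ the squared distance from $v_L$ to a polynomial subspace, and that this equals the squared distance from $v_{I\cup J}$ to $\mathrm{col}(U_{I\cup J,r})$ subtracted from the squared distance to the product subspace — a difference of two nested "distance-to-subspace" functionals, which is itself $\|\cdot\|^2$ of a projection by the Pythagorean theorem. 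Either framing reduces the lemma to the triangle inequality for the Euclidean norm.
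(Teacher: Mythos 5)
Your proposal is correct and rests on the same essential mechanism as the paper's proof: recognise that $Q(\cdot\,;I,J)$ is a positive semidefinite quadratic form so that $\sqrt{Q(\cdot\,;I,J)}$ is a seminorm, then apply the reverse triangle inequality to $y_{I\cup J}=\theta_{I\cup J}+\varepsilon_{I\cup J}$ and square. The one place you diverge is in how positive semidefiniteness is established: the paper is more economical, simply noting that $Q$ is a quadratic form in $y$ and that $Q\geq 0$ follows at once from \Cref{lem:1}, after which one writes $Q(v;I,J)=\|A^{1/2}v\|^2$ for a PSD matrix $A$ and never needs to identify $A$. You instead explicitly exhibit $Q(v;I,J)=\|\Pi\,v_{I\cup J}\|^2$, where $\Pi$ projects onto the orthogonal complement of $\mathrm{col}(U_{I\cup J,r})$ inside the larger product-polynomial subspace; this is correct (it is exactly the nesting $V\subset W$ and the Pythagorean identity $\|P_W v\|^2-\|P_V v\|^2=\|P_{W\cap V^\perp}v\|^2$), gives a bit more structural insight, but requires the extra verification that projection onto the product space acts blockwise. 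Either route delivers the seminorm property, and the remainder of the argument is identical.
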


\begin{proof}
First observe that $Q(y; I,J)$ is a quadratic form in $y$.  Moreover, it is a positive semidefinite quadratic form as $Q(y; I, J) \geq 0$ for all $y \in \mathbb{R}^n$ by \Cref{lem:1}.  Therefore, we can write $Q(y; I, J) = y^{\top} A y$, for a positive semidefinite matrix $A \in \mathbb{R}^{n \times n}$.  Denoting $A^{1/2}$ as the square root matrix of $A$, satisfying $A^{1/2} A^{1/2} = A$, we can write $Q(y; I, J) = \|A^{1/2} y\|^2$.   It then holds that
	\begin{align*}
		\sqrt{Q(y;I,J)} =& \|A^{1/2} (\theta + \varepsilon)\|\\
		\geq& \max\left\{\|A^{1/2} \theta\| - \|A^{1/2} \varepsilon\|, \, \|A^{1/2} \varepsilon\| - \|A^{1/2} \theta\|\right\},
	\end{align*}
	which leads to the final claim.
\end{proof}

\begin{lemma}[Lemma~E.1 in \cite{shen2020phase}]\label{lem-shen2020}
There exists an absolute constant $c_{\mathrm{poly}}$ depending only on $r$ such that for any integers 
	\begin{equation}\label{eq-lem-e1-cond}
	n \geq 1, \quad m \geq r+1, \quad m \leq n
	\end{equation} 
	and any real sequence $\{a_{\ell}\}_{\ell = 0}^{r}$, 
	\[
		\sum_{i = 1}^{m} \left[a_0 + a_1 \left(\frac{i}{n}\right) + \cdots + a_{r} \left(\frac{i}{n}\right)^r\right]^2 \geq c_{\mathrm{poly}} \max_{d = 1, \ldots, r} \frac{a_d^2 m^{2d+1}}{n^{2d}}.
	\]
\end{lemma}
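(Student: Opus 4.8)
The plan is to reduce the statement, by rescaling the variable, to a single inequality about equally spaced samples of polynomials on $[0,1]$, and then to prove that inequality by a compactness argument combined with a comparison to the continuous $L^2$ norm.

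First I would rescale. Writing $p(x) = \sum_{\ell=0}^{r} a_{\ell} x^{\ell}$ and substituting $x = (m/n)(i/m)$, with $b_{\ell} := a_{\ell}(m/n)^{\ell}$, one gets $p(i/n) = q_b(i/m)$ where $q_b(t) := \sum_{\ell=0}^{r} b_{\ell} t^{\ell}$, and moreover $\frac{a_d^2 m^{2d+1}}{n^{2d}} = b_d^2\, m$. Hence it suffices to produce a constant $c_{\mathrm{poly}} > 0$ depending only on $r$ with
\[
\frac{1}{m}\sum_{i=1}^{m} q_b(i/m)^2 \;\geq\; c_{\mathrm{poly}} \max_{d=1,\dots,r} b_d^2 \qquad \text{for all } m \geq r+1,\ b \in \mathbb{R}^{r+1}.
\]
By homogeneity (the left side is quadratic in $b$) it is enough to prove $\frac{1}{m}\sum_{i=1}^{m} q_b(i/m)^2 \geq c_{\mathrm{poly}}$ for $b$ on the unit sphere $S^r \subset \mathbb{R}^{r+1}$, since then $\max_{d} b_d^2 \leq \|b\|^2$ disposes of the general case (incidentally this route even gives the stronger bound with the maximum taken over $d = 0,\dots,r$).

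Set $f(m,b) := \frac{1}{m}\sum_{i=1}^{m} q_b(i/m)^2$. For each fixed $m \geq r+1$ this is continuous in $b$ and strictly positive on $S^r$: if it vanished at some $b$, then $q_b$ would vanish at the $m \geq r+1$ distinct points $1/m,\dots,m/m$, forcing $q_b \equiv 0$, contradicting $\|b\| = 1$; so $c_m := \min_{b \in S^r} f(m,b) > 0$ by compactness. To handle large $m$, I would compare $f(m,b)$ with $g(b) := \int_0^1 q_b(t)^2\,dt$: for $\|b\| \leq 1$ the functions $q_b^2$ are polynomials of degree $\leq 2r$ whose derivatives on $[0,1]$ are bounded by a constant $L_r$ depending only on $r$, so the right-endpoint Riemann-sum error gives $\sup_{b \in S^r} |f(m,b) - g(b)| \leq L_r/(2m) \to 0$. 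Since $g$ is continuous and, by the same root-counting argument, strictly positive on $S^r$, we have $c_{\infty} := \min_{b \in S^r} g(b) > 0$, and uniform convergence yields an $M$ with $f(m,b) \geq c_{\infty}/2$ for all $m \geq M$ and $b \in S^r$. Taking $c_{\mathrm{poly}} := \min\{c_{\infty}/2,\, c_{r+1},\dots,c_{M-1}\} > 0$ (a finite minimum of positive numbers) proves the reduced inequality, and undoing the rescaling gives the lemma.

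The main obstacle is exactly the uniformity over all $m \geq r+1$: a naive ``$f(m,\cdot)$ is a positive-definite quadratic form, hence bounded below by its least eigenvalue'' argument does not suffice, because that eigenvalue depends on $m$ and could a priori degenerate as $m \to \infty$. The two structural facts that save the argument, and that I would stress, are (i) a nonzero polynomial of degree at most $r$ cannot vanish at $r+1$ distinct grid points, which gives strict positivity for each individual $m$; and (ii) the uniform (over the coefficient sphere) convergence of the normalised discrete sums to $\int_0^1 q_b^2$, which collapses the infinitely many large $m$ to the single limiting functional $g$, leaving only finitely many small $m$ to be treated one at a time. An alternative, fully explicit route would bound $\frac{1}{m}\sum_i q_b(i/m)^2$ from below by the sum over $r+1$ suitably spread-out grid points, i.e.\ by $\lambda_{\min}$ of a fixed Vandermonde-type matrix, but arranging genuinely spread-out indices for every $m \geq r+1$ is fiddly, so I would favour the compactness argument.
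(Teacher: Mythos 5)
The paper does not actually prove this lemma; it simply restates it as Lemma~E.1 of the cited reference and writes ``We omit its proof here.'' So there is no paper proof to compare against. Your self-contained proof is correct. The rescaling $b_{\ell} = a_{\ell}(m/n)^{\ell}$ removes $n$ from the problem and reduces the claim to the uniform lower bound $m^{-1}\sum_{i=1}^{m} q_b(i/m)^2 \geq c_{\mathrm{poly}}$ over $b \in S^r$ and $m \geq r+1$, and the two mechanisms you isolate are exactly the right ones: root-counting (a nonzero degree-$\leq r$ polynomial cannot vanish at $m \geq r+1$ distinct grid points) gives $c_m := \min_{S^r} f(m,\cdot) > 0$ for each fixed $m$, while the uniform $O(1/m)$ Riemann-sum error to $g(b) = \int_0^1 q_b^2$ collapses all sufficiently large $m$ onto the single strictly positive limiting form $g$, leaving only finitely many $m$ to handle by compactness. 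Since $L_r$, $c_\infty$, and hence $M$ depend only on $r$, the final constant $c_{\mathrm{poly}} = \min\{c_\infty/2,\ c_{r+1},\dots,c_{M-1}\}$ does too. Your remark that the argument actually yields the stronger conclusion with the maximum over $d = 0,\dots,r$, and that the hypothesis $m \leq n$ is not needed after rescaling, is also accurate. Your diagnosis of why a naive per-$m$ eigenvalue bound fails (it does not rule out degeneration as $m \to \infty$) is precisely the point, and the compactness route you chose is cleaner than trying to construct explicit spread-out Vandermonde subgrids for every $m$.
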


\Cref{lem-shen2020} is a direct consequence of Lemma~E.1 in \cite{shen2020phase}.  We omit its proof here.

\begin{proposition}\label{prop:1}
Let $I = \{s, \ldots, \tau - 1\}$, $J = \{\tau, \ldots, e\}$ be two contiguous interval subsets of $\{1, \ldots, n\}$ such that $\min\{|I|, \, |J|\} \geq r + 1$.  Let $\theta = (\theta_i, i = 1, \ldots, n)^{\top} \in {R}^n$ be a piecewise discretized polynomial, i.e.~$\theta_i = f(i/n)$, where $f(\cdot)$ is a polynomial of order at most $r$ on $[s/n, \tau/n)$ and a polynomial of order at most $r$ on $[\tau/n, e/n)$.  

Let $\theta_{I \cup J}$, $\theta$ restricted on $I \cup J$, be reparametrised as 
	\[
		\theta_i = \begin{cases}
 			\sum_{l = 0}^r a_l (i/n - \tau/n)^l, & i \in I, \\
 			\sum_{l = 0}^r b_l (i/n - \tau/n)^l, & i \in J.
 		\end{cases}
	\]
	Then there exists an absolute constant $c_{\mathrm{poly}}$ depending only on $r$ such that for any $d \in \{l = 0, \ldots, r: \, a_l \neq b_l\}$,
	\[
		Q(\theta; I, J) \geq c_{\mathrm{poly}} (a_d - b_d)^2 \frac{\min\{|I|^{2d + 1}, \, |J|^{2d + 1}\}}{n^{2d}}.
	\]
\end{proposition}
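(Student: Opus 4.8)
The plan is to reduce the claimed lower bound on $Q(\theta; I, J)$ to the polynomial anti-concentration estimate in \Cref{lem-shen2020}. Recall that $Q(\theta; I, J) = H(\theta, I) + H(\theta, J) - H(\theta, I\cup J)$, and that $H(\theta, I) = H(\theta, J) = 0$ here because $\theta$ is exactly a degree-$\le r$ polynomial on each of $I$ and $J$ separately, so the residuals after projecting onto the polynomial space vanish. Hence $Q(\theta; I, J) = -H(\theta, I\cup J) \cdot (-1) = \|\theta_{I\cup J}\|^2 - H(\theta, I \cup J) = \|P_{I\cup J}\theta_{I\cup J}\|^2$; more usefully, $Q(\theta; I, J) = H(\theta, I\cup J) $ minus $0$ with a sign flip, so in fact $Q(\theta;I,J) = H(\theta, I\cup J)$ is wrong — the correct reading is $Q(\theta;I,J) = H(\theta,I\cup J)$'s negation absorbed: since $H(\theta,I)=H(\theta,J)=0$, we get $Q(\theta;I,J) = -H(\theta,I\cup J) + 0 + 0$, which is $\le 0$; but $Q\ge 0$ always, so actually $Q(\theta;I,J) = H(\theta,I\cup J)$. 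Let me restate cleanly: $Q(\theta;I,J) = H(\theta,I) + H(\theta,J) - H(\theta,I\cup J) = -H(\theta,I\cup J)$ would be negative, contradicting \Cref{lem:1}; the resolution is that the definition gives $Q(v;I,J) = H(v,I\cup J) - H(v,I) - H(v,J)$, so with $H(\theta,I)=H(\theta,J)=0$ we obtain $Q(\theta;I,J) = H(\theta,I\cup J)$, the residual sum of squares of the piecewise polynomial $\theta$ when fit by a single degree-$\le r$ polynomial on $I\cup J$. So the task becomes: lower bound the best degree-$\le r$ polynomial approximation error to $\theta$ on $I\cup J$.

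The key step is then a normalization argument. Fix $d \in \{l: a_l \ne b_l\}$. By definition of $H$, for any coefficients $\{c_l\}_{l=0}^r$,
\[
Q(\theta; I, J) = H(\theta, I\cup J) = \min_{\{c_l\}} \sum_{i \in I\cup J} \Big(\theta_i - \sum_{l=0}^r c_l (i/n - \tau/n)^l\Big)^2,
\]
using a shifted polynomial basis centered at $\tau/n$ (a linear reparametrization that does not change the span). Split the sum over $I$ and $J$: on $I$ the integrand is $\big(\sum_l (a_l - c_l)(i/n-\tau/n)^l\big)^2$ and on $J$ it is $\big(\sum_l (b_l - c_l)(i/n-\tau/n)^l\big)^2$. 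Write $u_l = a_l - c_l$ and $w_l = b_l - c_l$, so that $u_l - w_l = a_l - b_l$, in particular $u_d - w_d = a_d - b_d \ne 0$. Then for every choice of $\{c_l\}$, either $|u_d| \ge |a_d-b_d|/2$ or $|w_d| \ge |a_d-b_d|/2$. In the first case, apply \Cref{lem-shen2020} (after re-indexing $I$ so its points run over $\{1,\dots,|I|\}$ in the appropriate direction, which only permutes terms and rescales the polynomial argument by a bounded factor) to bound $\sum_{i\in I}(\cdots)^2 \ge c_{\mathrm{poly}} u_d^2 |I|^{2d+1}/n^{2d} \ge (c_{\mathrm{poly}}/4)(a_d-b_d)^2 |I|^{2d+1}/n^{2d}$; in the second case the symmetric bound holds on $J$. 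Either way, the total is at least $(c_{\mathrm{poly}}/4)(a_d-b_d)^2 \min\{|I|^{2d+1},|J|^{2d+1}\}/n^{2d}$, which gives the claim after absorbing the constant $1/4$ into $c_{\mathrm{poly}}$.

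The main obstacle — and the place requiring care — is applying \Cref{lem-shen2020} correctly to the intervals $I$ and $J$, which are of the form $\{s,\dots,\tau-1\}$ and $\{\tau,\dots,e\}$ rather than $\{1,\dots,m\}$, and whose polynomial argument is the shifted variable $i/n - \tau/n$ rather than $i/n$. One needs to check that translating and possibly reflecting the index set leaves the quantity $\max_d a_d^2 m^{2d+1}/n^{2d}$ controlled: reflection $i \mapsto (\text{endpoint}) - i$ maps a degree-$\le r$ polynomial to another degree-$\le r$ polynomial with the same top coefficient up to sign, and a shift by an integer does not change the leading coefficient either; the denominator $n^{2d}$ is the global scale and is untouched since we always divide the index by the same $n$. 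The hypothesis $\min\{|I|,|J|\} \ge r+1$ is exactly what is needed so that \eqref{eq-lem-e1-cond} is satisfied with $m = |I|$ and $m = |J|$ respectively. Once these bookkeeping points are verified, the proof is the short normalization-and-case-split argument above; no delicate estimate beyond \Cref{lem-shen2020} itself is involved.
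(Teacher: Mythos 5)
Your argument is correct and follows essentially the same route as the paper's proof: reduce $Q(\theta;I,J)$ to $H(\theta,I\cup J)$ using $H(\theta,I)=H(\theta,J)=0$, note that subtracting a single degree-at-most-$r$ polynomial from both pieces preserves the $d$th coefficient gap, and invoke \Cref{lem-shen2020} on whichever side of $\tau$ carries at least half of $|a_d-b_d|$. The paper packages the same idea as a minimisation over the class $\mathcal{A}_d$ together with the observation that the residual $v_{I\cup J}-P_{I\cup J}v_{I\cup J}$ remains in $\mathcal{A}_d$, which is precisely your invariance $u_d-w_d=a_d-b_d$.
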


\begin{proof}
For any fixed $d \in \{0, \ldots, r\}$ and any $\kappa > 0$, let 
	\begin{align}
		\mathcal{A}_d = \Bigg\{&v \in \mathbb{R}^{|I \cup J|}: \, \mbox{there exist } \{c_{1, l}, c_{2, l}, l = 0, \ldots, r\} \subset \mathbb{R} \nonumber \\
		& \mbox{s.t. } v = \begin{cases}
 			\sum_{l = 0}^r c_{1, l} (i/n - \tau/n)^l, & i \in I, \\
 			\sum_{l = 0}^r c_{2, l} (i/n - \tau/n)^l, & i \in J,
 		\end{cases} \mbox{ and } |c_{1, d} - c_{2, d}| \geq \kappa.\Bigg\} \label{eq-v-a-d-def}
	\end{align}
	In words, $\mathcal{A}_d$ is the set of vectors which are discretised polynomials of order at most $r$ on the interval $I/n$ and different polynomials of order at most $r$ on the interval $J/n$, with the $d$th order coefficients at least $\kappa$ apart. 
	
For $v \in \mathcal{A}_d$, since $v$ is a discretised polynomial on $I/n$ and $J/n$, separately, we have that $Q(v; I, J) = \|v_{I \cup J} - P_{I \cup J} v_{I \cup J}\|^2$.  In addition, we claim that
	\[
		\min_{v \in \mathcal{A}_d} \|v_{I \cup J} - P_{I \cup J} v_{I \cup J}\|^2 = \min_{v \in \mathcal{A}_d} \|v_{I \cup J}\|^2.
	\]
	This is due to the following.  Since orthogonal projections cannot increase the $\ell_2$ norm, we have the $\mathrm{LHS} \leq \mathrm{RHS}$.  As for the other direction, observe that the vector $v_{I \cup J} - P_{I \cup J} v_{I \cup J}$ also belongs to the set $\mathcal{A}_d$. 

It now suffices to lower bound $\min_{v \in \mathcal{A}_d} \|v_{I \cup J}\|^2$.  For any $v \in \mathcal{A}_d$, it holds that
	\begin{align*}
		\|v_{I \cup J}\|^2 =& \|v_I\|^2 + \|v_J\|^2 \geq \frac{c_{\mathrm{poly}}}{n^{2d}} \left(c_{1, d}^2 |I|^{2d + 1}  + c_{2, d}^2 |J|^{2r + 1}\right)\\
		 \geq& \frac{c_{\mathrm{poly}}}{n^{2d}} \kappa^2 \min\{|I|^{2d + 1}, \, |J|^{2d + 1}\},
	\end{align*}
	where $c_{1, d}$ and $c_{2, d}$ are the $d$th order coefficients of $v$ as defined in \eqref{eq-v-a-d-def}, the first inequality is due to \Cref{lem-shen2020}, and the second inequality follows from the fact that $|c_{1, d} - c_{2, d}| \geq \kappa$.
\end{proof}

\begin{lemma}[High Probability Event]\label{lem:noisebd}
Under \Cref{assume-model}, there exists an absolute constant $c_{\mathrm{prob}} > 0$ depending on $r$, and an absolute constant $c_{\mathrm{noise}} > 0$ depending only on $c_{\mathrm{prob}}$, such that
	\[
		\mathbb{P}\left\{\max_{\substack{I = [s, e] \\ 1 \leq s < e \leq n}} \|P_I \varepsilon_I\|^2 \geq c_{\mathrm{noise}} \sigma^2 \log(n)\right\} \leq n^{-c_{\mathrm{prob}}}.
	\]
\end{lemma}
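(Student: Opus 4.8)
The plan is to control $\|P_I\varepsilon_I\|^2$ for a single fixed interval $I=[s,e]$ by a sub-exponential tail bound, exploiting that $P_I$ is an orthogonal projection onto a subspace of dimension $m_I:=\mathrm{rank}(U_{I,r})=\min\{|I|,\,r+1\}\le r+1$, and then to take a union bound over the fewer than $n^2$ such intervals.

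First I would record the structural facts: as an orthogonal projection of rank $m_I\le r+1$, the matrix $P_I$ satisfies $\|P_I\|_{\mathrm{op}}\le 1$ and $\|P_I\|_F^2=m_I\le r+1$, and $\mathbb{E}[\varepsilon_I^\top P_I\varepsilon_I]=\sum_{i\in I}(P_I)_{ii}\,\mathbb{E}(\varepsilon_i^2)\le c_0(r+1)\sigma^2$ for a universal constant $c_0$ (since $\|\varepsilon_i\|_{\psi_2}\lesssim\sigma$ gives $\mathbb{E}\varepsilon_i^2\lesssim\sigma^2$). Then, applying the Hanson--Wright inequality to the quadratic form $\|P_I\varepsilon_I\|^2=\varepsilon_I^\top P_I\varepsilon_I$ in the independent, mean-zero, sub-Gaussian vector $\varepsilon_I$, for every $t>0$,
\[
\mathbb{P}\bigl\{\varepsilon_I^\top P_I\varepsilon_I\ge c_0(r+1)\sigma^2+t\bigr\}\le 2\exp\left(-c_{\mathrm{HW}}\min\left\{\frac{t^2}{(r+1)\sigma^4},\,\frac{t}{\sigma^2}\right\}\right),
\]
with $c_{\mathrm{HW}}$ universal. (One can equivalently avoid Hanson--Wright by writing $P_I=\sum_{j=1}^{m_I}v_jv_j^\top$ for an orthonormal family, noting each $\langle v_j,\varepsilon_I\rangle$ is sub-Gaussian with parameter $\lesssim\sigma$, so each $\langle v_j,\varepsilon_I\rangle^2$ is sub-exponential, and summing $m_I\le r+1$ of them; the Hanson--Wright route is preferable here because its linear regime involves only $\|P_I\|_{\mathrm{op}}\le 1$, hence no explicit $r$.)

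Next I would take $t=\tfrac12 c_{\mathrm{noise}}\sigma^2\log(n)$. For $n$ above a threshold depending on $r$ one has both $c_0(r+1)\le\tfrac12 c_{\mathrm{noise}}\log(n)$ and $t/\sigma^2\ge r+1=\|P_I\|_F^2/\|P_I\|_{\mathrm{op}}$, so the bracketed term in the display is at most $c_{\mathrm{noise}}\sigma^2\log(n)$ and the minimum in the exponent is realised by the linear term, yielding $\mathbb{P}\{\|P_I\varepsilon_I\|^2\ge c_{\mathrm{noise}}\sigma^2\log(n)\}\le 2n^{-c_{\mathrm{HW}}c_{\mathrm{noise}}/2}$. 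A union bound over the fewer than $n^2$ intervals $I=[s,e]$ with $1\le s<e\le n$ then gives
\[
\mathbb{P}\left\{\max_{I=[s,e]}\|P_I\varepsilon_I\|^2\ge c_{\mathrm{noise}}\sigma^2\log(n)\right\}\le 2\,n^{\,2-c_{\mathrm{HW}}c_{\mathrm{noise}}/2}.
\]
It then remains only to fix $c_{\mathrm{prob}}>0$ (allowed to depend on $r$, which also absorbs the finitely many small values of $n$) and to choose $c_{\mathrm{noise}}$ as a function of $c_{\mathrm{prob}}$ and $c_{\mathrm{HW}}$ large enough that $c_{\mathrm{HW}}c_{\mathrm{noise}}/2\ge c_{\mathrm{prob}}+3$, so that the right-hand side is at most $n^{-c_{\mathrm{prob}}}$ for all $n\ge 2$.

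\textbf{Main obstacle.} There is no substantive difficulty: the argument is a concentration bound followed by a union bound. The only care needed is bookkeeping — ensuring the $r$-dependence enters solely through the rank bound $m_I\le r+1$ (so the linear, operator-norm side of Hanson--Wright is $r$-free and $c_{\mathrm{noise}}$ can be made to depend only on $c_{\mathrm{prob}}$), and handling small $n$, where the ``linear regime'' condition may fail; this last point is absorbed by allowing $c_{\mathrm{prob}}$ to depend on $r$.
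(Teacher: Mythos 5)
Your proposal is correct and follows essentially the same route as the paper's proof: Hanson--Wright applied to the quadratic form $\varepsilon_I^{\top}P_I\varepsilon_I$, using that $P_I$ is a rank-at-most-$(r+1)$ orthogonal projection so $\|P_I\|_{\mathrm{op}}\le 1$ and $\|P_I\|_F^2\le r+1$, bounding the mean by a constant times $(r+1)\sigma^2$, and taking a union bound over the $O(n^2)$ intervals. Your bookkeeping is in fact a touch more careful than the paper's (the paper writes $\|P_I\|_F=r+1$ rather than $\sqrt{r+1}$, and does not note the rank can drop below $r+1$ for very short intervals), but these are constant-tracking details that do not change the argument.
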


\begin{proof}
For any interval $I \subset \{1, \ldots, n\}$, there exists an absolute positive constant $c > 0$ depending only on $r$ such that for any $t > 0$,
	\[
		\mathbb{P}\left\{\varepsilon_I^{\top} P_I \varepsilon_I - \mathbb{E}\left(\varepsilon_I^{\top} P_I \varepsilon_I\right) \geq t\right\} \leq 2 \exp\left[-c \min\left\{\frac{t^2}{\sigma^4 \|P_I\|_{\mathrm{F}}^2}, \, \frac{t}{\sigma^2 \|P_I\|_{\mathrm{op}}}\right\}\right],
	\]
	which is due to the Hanson--Wright inequality \citep[e.g.~Theorem~1.1 in][]{rudelson2013hanson}.  Since $P_I$ is a rank $r + 1$ orthogonal projection matrix, we have $\|P_I\|_{\mathrm{F}} = r + 1$ and $\|P_I\|_{\mathrm{op}} = 1$.  Then
	\[
		\mathbb{P}\left\{\varepsilon_I^{\top} P_I \varepsilon_I - \mathbb{E}\left(\varepsilon_I^{\top} P_I \varepsilon_I\right) \geq t\right\} \leq 2 \exp\left[-c \min\left\{\frac{t^2}{\sigma^4 (r+1)^2}, \, \frac{t}{\sigma^2}\right\}\right].
	\]
	
In addition, we have that 
	\[
		\mathbb{E}\left(\varepsilon_I^{\top}P_I\varepsilon_I\right) \leq \mathrm{tr}(P_I) \max_{i = 1, \ldots, n} \mathbb{E}(\varepsilon_i^2) \leq (r + 1)\sigma^2.
	\] 
	For an absolute constant $C > c/2$, letting $t = C\sigma^2 \log(n)$ and applying a union bound argument over all possible $I$, we obtain that
	\[
		\mathbb{P}\left\{\max_{\substack{I = [s, e] \\ 1 \leq s < e \leq n}} \|P_I \varepsilon_I\|^2 \geq C\sigma^2 \log(n) + (r+1)\sigma^2 \right\} \leq 2 n^{2-cC}.
	\]
	Finally, we choose $c_{\mathrm{prob}}$ and $c_{\mathrm{noise}}$ such that
	\[
		n^{-c_{\mathrm{prob}}} > 2n^{2-cC} \quad \mbox{and} \quad c_{\mathrm{noise}} \log(n) > C\log(n) + (r+1),
	\]
	then we complete the proof.
\end{proof}

\section{Proof of \Cref{thm}}\label{sec-proofs-main}

In this section, we provide the proof of \Cref{thm}.  We will prove the result by first proving that under an appropriate deterministic choice of the tuning parameter $\lambda$ and some deterministic conditions on other parameters, obtaining the desired localisation error is possible.  We will then conclude the proof using \Cref{lem:noisebd}, under which all these required conditions hold.  

For any $\tau > 0$, define
	\begin{equation}\label{eq-def-G-event}
		\mathcal{M}(\tau) = \left\{\max_{\substack{I = [s, e] \\ 1 \leq s < e \leq n}} \|P_I \varepsilon_I\|^2 \leq \tau \right\}.
	\end{equation}

\begin{proof}[Proof of Theorem~\ref{thm}]
It follows from \Cref{lem:noisebd} that
	\[
		\mathbb{P}\left[\mathcal{M}\{c_{\mathrm{noise}} \sigma^2 \log(n)\}\right] \geq 1 - n^{-c_{\mathrm{prob}}},
	\]
	where $\mathcal{M}(\cdot)$ is defined in \eqref{eq-def-G-event}.  
	
On the event $\mathcal{M}\{c_{\mathrm{noise}} \sigma^2 \log(n)\}$, it follows from \Cref{prop:main1} that 
	\[
		\widehat{K} = K \;\; \mbox{and} \;\; |\widetilde{\eta}_k - \eta_k| \leq c_{\mathrm{error}} \left\{\frac{K n^{2r_k} \sigma^2 \log(n)}{\kappa_k^2}\right\}^{1/(2r_k+1)}, \;\; \forall k \in \{1, \ldots, K\}.
	\]
	In addition, due to \eqref{eq-snr}, it holds that 
	\[
		\max_{k = 1, \ldots, K}|\widetilde{\eta}_k - \eta_k| < \Delta/5.
	\]
	Then it follows from \Cref{lem:refine} that
	\[
		|\widehat{\eta}_k - \eta_k| \leq c_{\mathrm{error}} \left\{\frac{n^{2r_k} \sigma^2 \log(n)}{\kappa_k^2}\right\}^{1/(2r_k+1)}, \quad \forall k \in \{1, \ldots, K\}.
	\]
	We complete the proof.
\end{proof}

\subsection{The initial estimators $\{\widetilde{\eta}_k\}_{k = 1}^{\widehat{K}}$}

The following proposition is our main intermediate result used to prove \Cref{thm}.

\begin{proposition}\label{prop:main1}
Let data $\{y_i\}_{i = 1}^n$ satisfy \Cref{assume-model}.  Let $\{\widetilde{\eta}_k\}_{k = 1}^{\widehat{K}}$ be the initial estimators of \Cref{algorithm:PDP}, with inputs $\{y_i\}_{i = 1}^n$ and tuning parameter $\lambda$.  

On the event $\mathcal{M}(\tau)$ defined in \eqref{eq-def-G-event}, for any $\tau > 0$, let
	\begin{equation}\label{eq-lambda-prop-main-1}
		\lambda > (4 K + 5)\tau.
	\end{equation}
	Assume that
	\begin{equation}\label{eq-rho-prop-main-1}
		\rho > 5^{2r + 1} \max\{6 \lambda, \, r + 1\}.
	\end{equation}
	We have that for any $k \in \{1, \ldots, K\}$, there exists an absolute constant $0< c < 5^{2r/(2r+1)}/2$, such that
	\[
		|\widetilde{\eta}_k - \eta_k| \leq c\left(\frac{n^{2r_k}\max\{6 \lambda, \, r + 1\}}{\kappa_k^2}\right)^{1/(2r_k + 1)}.
	\]
\end{proposition}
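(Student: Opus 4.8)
The plan is to establish the two conclusions — first $\widehat{K} = K$, then the localisation bound for each $\widetilde\eta_k$ — by exploiting the optimality of $\widehat\Pi$ for $G(\cdot,\lambda)$ together with the deterministic noise control furnished by $\mathcal{M}(\tau)$. The central object is the decomposition $G(\Pi,\lambda) = \sum_{I\in\Pi} H(y,I) + \lambda|\Pi|$ and the identity $H(y,I\cup J) = H(y,I) + H(y,J) + Q(y;I,J)$ for contiguous $I,J$, which lets me compare the cost of merging or splitting intervals.

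First I would argue that no estimated interval $I\in\widehat\Pi$ can straddle more than a controlled amount of a true change point; more precisely, I would show that between any two consecutive estimated change points $\widetilde\eta_{k-1}, \widetilde\eta_k$ there is at most one true change point, and that every true change point has an estimated one nearby. The tool here is: if an estimated interval $I$ contained a true change point $\eta_k$ comfortably in its interior (distance $\gtrsim$ the claimed error on both sides), then splitting $I$ at $\eta_k$ decreases $\sum H(y,\cdot)$ by at least $Q(\theta;I_{\mathrm{left}},I_{\mathrm{right}})$ minus a noise term of size $O(\tau)$; by Proposition~\ref{prop:1}, $Q(\theta;\cdot,\cdot) \gtrsim c_{\mathrm{poly}} \kappa_{k,d}^2 \min\{|I_{\mathrm{left}}|,|I_{\mathrm{right}}|\}^{2d+1} n^{-2d}$, and for $d = l_k$ and the interval length at the scale $\Delta_k$ this is $\gtrsim \rho_k \gtrsim \rho$. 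The hypothesis \eqref{eq-rho-prop-main-1} that $\rho > 5^{2r+1}\max\{6\lambda,r+1\}$ then makes this gain dominate the $\lambda$ cost of the extra interval and the $O(\tau)$ noise slack (recall $\lambda > (4K+5)\tau$). Running this argument at every true change point forces $\widehat K \geq K$; a complementary over-segmentation argument — if $\widehat K > K$ there is an estimated interval containing no true change point, on which $H(\theta,\cdot) = 0$ and merging it with a neighbour costs at most $Q(y;\cdot,\cdot) \leq$ a noise term $\leq$ something $< \lambda$ — forces $\widehat K \leq K$, hence $\widehat K = K$.

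Having pinned down $\widehat K = K$ and a one-to-one near-correspondence between $\{\widetilde\eta_k\}$ and $\{\eta_k\}$, I would then sharpen the localisation error. Fix $k$ and suppose for contradiction that $|\widetilde\eta_k - \eta_k| > c\,(n^{2r_k}\max\{6\lambda,r+1\}/\kappa_k^2)^{1/(2r_k+1)} =: \delta$. The estimated partition puts a change point at $\widetilde\eta_k$ but not at $\eta_k$; consider modifying $\widehat\Pi$ by moving (or inserting-and-deleting) a breakpoint to align with $\eta_k$. The change in $\sum H(y,\cdot)$ is governed, via Lemma~\ref{lem:2}, by $Q(\theta;\cdot,\cdot)$ on intervals of length $\gtrsim \delta$ on either side of $\eta_k$ minus $2\sqrt{Q(\varepsilon;\cdot,\cdot)\cdot Q(\theta;\cdot,\cdot)}$ and $Q(\varepsilon;\cdot,\cdot)$ noise terms, each bounded by $O(\tau)$ on $\mathcal{M}(\tau)$. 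By Proposition~\ref{prop:1} applied with $d = r_k$, $Q(\theta;\cdot,\cdot) \gtrsim c_{\mathrm{poly}}\,\kappa_k^2\,\delta^{2r_k+1} n^{-2r_k}$, and the choice of $\delta$ is exactly calibrated so that this quantity exceeds the aggregate noise slack $(4K+5)\tau < \lambda$ with room to spare — the factor $5^{2r+1}$ and the constant $c < 5^{2r/(2r+1)}/2$ in the statement are the bookkeeping constants that make this inequality close. This contradicts the optimality of $\widehat\Pi$, giving the bound. The subtlety in choosing $d = r_k$ rather than $l_k$ is that $r_k$ is, by \eqref{eq-active-rk-kk}, the order in $\mathcal{S}_k$ minimising the resulting error exponent, so one must verify $r_k\in\mathcal{S}_k$ (immediate from the definition) and that the signal at order $r_k$, namely $\rho_{k,r_k}\geq c_{\mathrm{signal}}\lambda$, is strong enough to drive the same split/merge comparisons — this is where the $K$-factor in $\lambda$ and in the final error bound enters, since the noise slack accumulated across up to $O(K)$ intervals is $O(K\tau)$.

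The main obstacle I anticipate is the careful handling of intervals that contain \emph{more than one} true change point in the initial partition, and the associated inductive/global accounting: one cannot treat change points fully in isolation because an estimated interval may span several of them, and the $\sum_{I\in\widehat\Pi} H(y,I)$ comparison is genuinely global. The standard device is to compare $\widehat\Pi$ against the refinement $\widehat\Pi \vee \{\eta_1,\ldots,\eta_K\}$ (the common refinement with the true partition), bound the total improvement in $H$ from below using Lemma~\ref{lem:1} and Proposition~\ref{prop:1} summed over all true change points, and bound the cost increase $\lambda\cdot(\text{number of added breakpoints}) \leq \lambda K$ from above; the interplay between this $\lambda K$ term, the $(4K+5)\tau$ noise budget, and the per-change-point signal $\rho$ is exactly why the hypotheses take the form \eqref{eq-lambda-prop-main-1}–\eqref{eq-rho-prop-main-1}, and getting all the constants to line up is the delicate part.
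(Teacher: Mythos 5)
Your proposal uses the right toolbox --- $\ell_0$ split/merge comparisons controlled via \Cref{lem:1}, \Cref{lem:2} and \Cref{prop:1} --- and the split argument for $\widehat K \geq K$ and for the localisation bound (insert a breakpoint at $\eta_k$, the signal gain $Q(\theta;\cdot,\cdot)$ dominates the $\lambda$-cost once both sides are longer than the claimed error) is essentially what the paper does in parts (a), (c) and (d) of its proof. However, there is a genuine gap in your over-segmentation direction.

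You claim that if $\widehat K > K$ then some $I\in\widehat\Pi$ contains no true change point, and that merging $I$ with a neighbour $J$ costs at most $Q(y;I,J)\leq O(\tau)<\lambda$. This fails in general: $J$ may contain a true change point close to the shared boundary, so $\theta$ need not be a single polynomial on $I\cup J$, and $Q(\theta;I,J)$ can be large --- the merge can be expensive, not cheap. The merge comparison only rules out the case that $I\cup J$ itself contains no true change point; that yields the much weaker bound $|\widehat\Pi|\leq 3|\Pi|$ (every three consecutive estimated change points straddle a true one), not $\widehat K\leq K$. What actually closes the gap in the paper is a separate global comparison (its part (e)): comparing $S(y,\widehat\Pi)$ with $S(y,\Pi)$ through the common refinement $\widehat\Pi\cap\Pi$, one obtains
\[
  (\widehat K - K)\lambda \;\leq\; \|\varepsilon\|^2 - S(\varepsilon,\widehat\Pi\cap\Pi) \;\leq\; (\widehat K + K + 2)\tau \;\leq\; (4K+5)\tau,
\]
which contradicts \eqref{eq-lambda-prop-main-1} unless $\widehat K\leq K$. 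Note that this inequality chain uses only the noise budget on $\mathcal{M}(\tau)$ and the identity $H(y,I)=H(\varepsilon,I)$ on pieces of the refinement where $\theta$ is a single polynomial --- it does \emph{not} require summing signal gains $Q(\theta;\cdot,\cdot)$ across change points, which is what your last paragraph suggests; and it requires the crude bound $|\widehat\Pi|\leq 3|\Pi|$ as an input so that $(\widehat K + K + 2)\tau \leq (4K+5)\tau$ can be compared against $\lambda$. Your closing paragraph correctly intuits that a global refinement accounting is needed, but frames it as a remedy for intervals containing several true change points rather than as the mechanism for $\widehat K\leq K$, and misattributes to it a signal-gain role it does not play in the paper's proof.

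A secondary imprecision: the paper also needs a preliminary combinatorial control (its part (a)) that no $I\in\widehat\Pi$ contains three or more true change points, proved by a $4$-way split around an interior change point costing $3\lambda$ against a gain $\geq \rho$. You gesture at this but do not isolate it; it matters because parts (c) and (d) (localisation for intervals with exactly two, respectively one, true change point) are the exhaustive case analysis once part (a) is in place, and the one-to-one correspondence between true and estimated change points (hence $\widehat K\geq K$) depends on $\Delta$ exceeding twice the localisation error, which in turn uses \eqref{eq-rho-prop-main-1}.
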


\begin{remark}
Note that Proposition~\ref{prop:main1} is a completely deterministic result. In particular, no probabilistic assumption is needed on the noise variables.  The proposition is written with explicit constants but these constants are not optimal in any sense.  We have written out explicit constants just to emphasise the deterministic nature of the result and in better understanding of the relative choices of the different problem parameters. 
\end{remark}

\begin{proof}[Proof of \Cref{prop:main1}]
We will show that 
\begin{itemize}
	\item [(a)]	For any $I = [s, e) \in \widehat{\Pi}$, there are no more than two true change points. 
	\item [(b)]	For any two consecutive intervals $I, J \in \widehat{\Pi}$, the interval $I \cup J$ contains at least one true change point. 
	\item [(c)] For any $I = [s, e) \in \widehat{\Pi}$, if there are exactly two true change points contained in $I$, i.e.~$\eta_{k-1} < s \leq \eta_k < \eta_{k+1} < e \leq \eta_{k+2}$, then 
		\[
			\eta_k - s \leq c\left(\frac{n^{2r_k}\max\{6 \lambda, \, r + 1\}}{\kappa_k^2}\right)^{1/(2r_k + 1)}
		\]
		and
		\[
			e - \eta_{k+1} \leq c\left(\frac{n^{2r_{k+1}}\max\{6 \lambda, \, r + 1\}}{\kappa_{k+1}^2}\right)^{1/(2r_{k+1} + 1)}.
		\]
	\item [(d)] For any $I = [s, e) \in \widehat{\Pi}$, if there is exactly one true change point contained in $I$, i.e.~$\eta_{k-1} < s \leq \eta_k < e \leq \eta_{k+1}$, then
		\[
			\min\{e - \eta_k, \, \eta_k - s\} \leq c\left(\frac{n^{2r_k}\max\{6 \lambda, \, r + 1\}}{\kappa_k^2}\right)^{1/(2r_k + 1)}.
		\]
	\item [(e)] If $|\Pi| \leq |\widehat{\Pi}| \leq 3|\Pi|$, then it holds that $|\widehat{\Pi}| = |\Pi|$. 
\end{itemize}

Parts (a)-(d) are shown in Lemmas~\ref{lem-no-more-than-two}, \ref{lem-partd}, \ref{lem-exactly-two}, \ref{lem-case-c} and \ref{sec-proof-e}, respectively.  Letting $\widetilde{\eta}_0 = 1$ and $\widetilde{\eta}_{\widehat{K} + 1} = n+1$, it follows from part (b) that for every 3 consecutive change point estimators in $\{\widetilde{\eta}_k\}_{k = 0}^{\widehat{K} + 1}$, there is at least one true change point $\{\eta_k\}_{k = 0}^{K}$.  This implies that $|\widehat{\Pi}| \leq 3 |\Pi|$. 

In addition, by part (a), an interval $I = [s, e) \in \widehat{\Pi}$ can contain two, one or zero true change point.  If $I$ contains exactly two true change points, then by part (c), the smaller true change point is close to the left endpoint $s$, and the larger true change point is close to the right endpoint $e$.  The closeness is defined by part (c).  If $I$ contains exactly one true change point, then by part (d), the true change point is close to one of the endpoints.  This shows that every true change point can be mapped to an estimated change point, and the distance between the true and the estimated is upper bounded by what is shown in (c) and (d). 

Recall the definition of $\rho$ in \Cref{assume-model} and the condition \eqref{eq-rho-prop-main-1}.  We have that
	\begin{align*}
		\Delta & > \max_{k = 1, \ldots, K} \left(\frac{5^{2r+1} \max\{6\lambda, r+1\} n^{2r_k}}{\kappa_k^2}\right)^{1/(2r_k+1)} \\
		& > 2c \max_{k = 1, \ldots, K} \left(\frac{ \max\{6\lambda, r+1\} n^{2r_k}}{\kappa_k^2}\right)^{1/(2r_k+1)}.
	\end{align*}
	This assures that the mapping of true change points to estimated change points is one to one and implies that $|\widehat{\Pi}| \geq |\Pi|$.  Finally, part (e) is deployed to complete the proof. 
\end{proof}

\begin{lemma}[Part (a) in the proof of \Cref{prop:main1}]\label{lem-no-more-than-two}
Under all the assumptions in \Cref{prop:main1}, for any $I \in \widehat{\Pi}$, it holds that $I$ does not contain more than two true change points.
\end{lemma}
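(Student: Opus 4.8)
The plan is to argue by contradiction: suppose some interval $I = [s,e) \in \widehat{\Pi}$ contains three or more true change points, say $\eta_k < \eta_{k+1} < \eta_{k+2}$ all lie in $I$ (possibly more, but three suffices). I would then exhibit a strictly better competitor partition, contradicting the optimality of $\widehat{\Pi}$ in \eqref{eq:defplse}. The natural competitor is obtained by refining $\widehat{\Pi}$: replace the single block $I$ by the sub-blocks cut at $\eta_k, \eta_{k+1}, \eta_{k+2}$ (or just at a well-chosen subset of them), leaving all other blocks of $\widehat{\Pi}$ untouched. Call this refined partition $\Pi'$; it satisfies $|\Pi'| = |\widehat{\Pi}| + m$ for the number $m \in \{1,2,3\}$ of new cuts introduced, so the penalty increases by exactly $\lambda m$. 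It then suffices to show that the residual-sum-of-squares term drops by strictly more than $\lambda m$, i.e. $H(y, I) - \sum_{l} H(y, I_l) > \lambda m$, where $\{I_l\}$ are the sub-blocks.

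First I would set up the decomposition of the RSS gain using the $Q$-functional from \Cref{sec-proofs}: iterating $H(y, A \cup B) = H(y,A) + H(y,B) + Q(y; A, B)$, the total gain from splitting $I$ into pieces at the true change points equals a sum of $Q(y; \cdot, \cdot)$ terms, each of the form $Q(y; [\text{left part}], [\text{right part}])$ straddling one of the $\eta$'s. Next, for each such straddling split I would lower-bound $Q(y; I_1, I_2)$ via \Cref{lem:2}: $Q(y; I_1, I_2) \geq \big(\sqrt{Q(\theta; I_1, I_2)} - \sqrt{Q(\varepsilon; I_1, I_2)}\big)^2$ where $\theta = \E(y)$. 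The signal term $Q(\theta; I_1, I_2)$ is bounded below by \Cref{prop:1}: choosing $d = r_k$ (the active order from \eqref{eq-active-rk-kk}), it is at least $c_{\mathrm{poly}} \kappa_k^2 \min\{|I_1|^{2r_k+1}, |I_2|^{2r_k+1}\} / n^{2r_k}$. The key geometric observation is that if $I$ contains three true change points, then around the middle one $\eta_{k+1}$ we have $\min\{|I_1|,|I_2|\} \geq \Delta$ on both sides (both the left and right pieces of a split at $\eta_{k+1}$ span at least a full effective sample size $\Delta$), so this signal term is $\gtrsim \kappa_{k+1}^2 \Delta^{2r_{k+1}+1} n^{-2r_{k+1}} = \rho_{k+1, r_{k+1}} \geq \rho$ (or at least a large constant multiple thereof after tracking $\Delta \geq$ the localisation radius). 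Meanwhile the noise term $Q(\varepsilon; I_1, I_2) \leq \|P_{I_1}\varepsilon_{I_1}\|^2 + \|P_{I_2}\varepsilon_{I_2}\|^2 \leq 2\tau$ on the event $\mathcal{M}(\tau)$. Combining, each straddling $Q$-gain is at least $\big(\sqrt{c_{\mathrm{poly}}\rho} - \sqrt{2\tau}\big)^2$, which under \eqref{eq-rho-prop-main-1} and \eqref{eq-lambda-prop-main-1} (so $\rho \gg \lambda \gg \tau$) is much larger than, say, $2\lambda$. Since even one such split at $\eta_{k+1}$ already yields gain exceeding $\lambda \cdot 1$ (hence exceeding the penalty increase of a single cut), we may take the competitor $\Pi'$ to introduce just that one cut, obtaining $G(\Pi', \lambda) < G(\widehat{\Pi}, \lambda)$ — the contradiction.

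The main obstacle I anticipate is the bookkeeping needed to guarantee that $\min\{|I_1|, |I_2|\} \geq \Delta$ (or at least $\geq r+1$, as required for \Cref{prop:1} and \Cref{lem-shen2020} to apply) when we cut at the middle change point: one must be careful that $I$ genuinely straddles $\eta_{k+1}$ with room on both sides, which holds precisely because $\eta_k \geq s$ and $\eta_{k+2} < e$ force $\eta_{k+1} - s \geq \eta_{k+1} - \eta_k \geq \Delta$ and $e - \eta_{k+1} > \eta_{k+2} - \eta_{k+1} \geq \Delta$. A secondary subtlety is the choice of the "active" order $d = r_{k+1}$ in applying \Cref{prop:1}: one needs $r_{k+1} \in \mathcal{S}_{k+1}$ so that $\rho_{k+1, r_{k+1}} \geq c_{\mathrm{signal}}\lambda$, which is exactly how $\mathcal{S}_k$ and $r_k$ were defined in \eqref{eq-Sk-set}–\eqref{eq-active-rk-kk}, combined with the global signal condition \eqref{eq-rho-prop-main-1}. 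Once these are in place, the inequality $\big(\sqrt{c_{\mathrm{poly}}\rho} - \sqrt{2\tau}\big)^2 > \lambda$ follows from the assumed separation of scales, and the contradiction with the optimality of $\widehat{\Pi}$ closes the argument.
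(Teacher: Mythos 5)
Your overall strategy matches the paper's: contradict optimality of $\widehat{\Pi}$ by exhibiting a refined competitor partition, bound the resulting RSS gain from below using the $Q$-functional via \Cref{lem:1}, \Cref{lem:2} and \Cref{prop:1}, and show this gain beats the added penalty under \eqref{eq-lambda-prop-main-1}--\eqref{eq-rho-prop-main-1}. The paper's proof of \Cref{lem-no-more-than-two} likewise subdivides $I$ into four pieces (introducing three cuts, so the penalty rises by $3\lambda$) and derives the contradiction from the optimality of $\widehat{\Pi}$ in \eqref{eq:defplse}.

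There is, however, a gap at the step where you invoke \Cref{prop:1} for $Q(\theta; I_1, I_2)$ with $I_1 = [s, \eta_{k+1})$ and $I_2 = [\eta_{k+1}, e)$. \Cref{prop:1} requires $\theta = \mathbb{E}(y)$ restricted to \emph{each} flanking interval to be a single degree-at-most-$r$ discretised polynomial. That fails on both sides here: $I_1$ contains $\eta_k$ in its interior and $I_2$ contains $\eta_{k+2}$, so $\theta$ is only piecewise polynomial on each. Indeed $H(\theta, I_1)$ and $H(\theta, I_2)$ are both strictly positive (they absorb the two outer jumps), and $Q(\theta;I_1,I_2) = H(\theta,I) - H(\theta,I_1) - H(\theta,I_2)$ has no reason to be lower-bounded by the $\eta_{k+1}$-jump alone, so the claim that $Q(\theta; I_1, I_2) \gtrsim \rho$, and with it the ``one cut at $\eta_{k+1}$ suffices'' shortcut, is unjustified. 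The remedy is to make the sub-blocks lie inside single polynomial segments and then peel off one $Q$-term via \Cref{lem:1}. The paper (in your indexing) cuts $I$ at $\eta_{k+1}-\Delta$, $\eta_{k+1}$ and $\eta_{k+1}+\Delta$ (using $\min\{\eta_{k+1}-s,\,e-\eta_{k+1}\}>\Delta$), applies \Cref{lem:1} to get $H(y,I) - \sum_l H(y,I_l) \ge Q(y;I_2,I_3)$ with $I_2 = [\eta_{k+1}-\Delta,\eta_{k+1})$, $I_3 = [\eta_{k+1},\eta_{k+1}+\Delta)$ each of length exactly $\Delta$ and inside a single segment (since $\Delta$ is the minimum spacing), and only then invokes \Cref{prop:1}. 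Your alternative of cutting at all three true change points and peeling off $Q(y;[\eta_k,\eta_{k+1}),[\eta_{k+1},\eta_{k+2}))$ works equally well for the same $3\lambda$ penalty; what cannot work as written is the single cut.
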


\begin{proof}
We prove by contradiction, assuming that there exists at least three true change points in $I = [s, e) \in \widehat{\Pi}$, namely $s \leq \eta_{k - 1} < \eta_k < \eta_{k + 1} < e$.  This implies that
	\[
		\min\{\eta_k - s, \, e - \eta_k\} > \Delta.
	\]
	Denote $I_1 = [s, \eta_k - \Delta)$, $I_2 = [\eta_k - \Delta, \eta_k)$, $I_3 = [\eta_k, \eta_k + \Delta)$ and $I_4 = [\eta_k + \Delta, e)$.  Let $\widetilde{\Pi}$ be the interval partition such that 
	\[
		\widetilde{\Pi} = \widehat{\Pi} \cup \{I_1, I_2, I_3, I_4\} \setminus \{I\}.
	\]	

It holds that	
	\begin{align*}
		0 & \geq G(\widehat{\Pi}, \lambda) - 	G(\widetilde{\Pi}, \lambda) \\
		& = -3\lambda + H(y, I) - H(y, I_1) - H(y, I_2) - H(y, I_3) - H(y, I_4) \\
		& \geq -3 \lambda + H(y, I_2 \cup I_3) - H(y, I_2) - H(y, I_3) = -3 \lambda + Q(y; I_2,I_3) \\
		& \geq -3\lambda + \left\{\sqrt{Q(\theta; I_2, I_3)} - \sqrt{Q(\varepsilon; I_2,I_3)}\right\}^2 \\
		& \geq -3\lambda + \frac{Q(\theta; I_2, I_3)}{4} \mathbbm{1}\{Q(\theta; I_2, I_3) > 4 Q(\varepsilon; I_2, I_3)\},
	\end{align*}
	where $\mathbbm{1}\{\cdot\}$ is an indicator function, the first inequality follows the definition of $\widehat{\Pi}$, the second is from \Cref{lem:1} and the third follows from \Cref{lem:2}.  As for the final inequality, it follows from \Cref{prop:1} and the fact that $|I_2|, |I_3| = \Delta$ that $Q(\theta; I_2, I_3) \geq \rho$.  Since our assumption implies that $2 \tau \leq \rho$, it holds that $12 \lambda \geq \rho$ which contradicts the second assumption in~\eqref{eq-lambda-prop-main-1}. 
\end{proof}

\begin{lemma}[Part (b) in the proof of \Cref{prop:main1}] \label{lem-partd}
Under all the assumptions in \Cref{prop:main1}, for any two consecutive intervals $I_1, I_2  \in \widehat{\Pi}$, there is at least one true change point in $I_1 \cup I_2$. 
\end{lemma}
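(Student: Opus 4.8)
The plan is to argue by contradiction: suppose $I_1,I_2\in\widehat\Pi$ are two consecutive intervals whose union $I_1\cup I_2$ contains no true change point. Then $\theta$ restricted to $I_1\cup I_2$ is a single degree-$\le r$ polynomial, so $H(\theta,I_1\cup I_2)=0$ and in particular $Q(\theta;I_1,I_2)=0$. The idea is to merge $I_1$ and $I_2$ into the single interval $I_1\cup I_2$, producing a competitor partition $\widetilde\Pi=\widehat\Pi\cup\{I_1\cup I_2\}\setminus\{I_1,I_2\}$ with one fewer piece, and show the merged partition strictly decreases $G(\cdot,\lambda)$, contradicting the optimality of $\widehat\Pi$.

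First I would write out the optimality inequality $0\ge G(\widehat\Pi,\lambda)-G(\widetilde\Pi,\lambda)$. Since $\widetilde\Pi$ has one fewer interval, the penalty term contributes $+\lambda$, and the residual-sum-of-squares terms contribute $H(y,I_1)+H(y,I_2)-H(y,I_1\cup I_2)=-Q(y;I_1,I_2)$. So the inequality becomes $0\ge \lambda - Q(y;I_1,I_2)$, i.e.\ $Q(y;I_1,I_2)\ge\lambda$. Next I would upper bound $Q(y;I_1,I_2)$ on the event $\mathcal M(\tau)$. Writing $y=\theta+\varepsilon$ and using that $Q(\cdot;I_1,I_2)=\|A^{1/2}\cdot\|^2$ is a positive semidefinite quadratic form (as in the proof of \Cref{lem:2}), the triangle inequality gives $\sqrt{Q(y;I_1,I_2)}\le\sqrt{Q(\theta;I_1,I_2)}+\sqrt{Q(\varepsilon;I_1,I_2)}$. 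The first term vanishes because there is no change point in $I_1\cup I_2$. For the second term, $Q(\varepsilon;I_1,I_2)=\|P_{I_1}\varepsilon_{I_1}\|^2+\|P_{I_2}\varepsilon_{I_2}\|^2-\|P_{I_1\cup I_2}\varepsilon_{I_1\cup I_2}\|^2\le\|P_{I_1}\varepsilon_{I_1}\|^2+\|P_{I_2}\varepsilon_{I_2}\|^2\le 2\tau$ on $\mathcal M(\tau)$. Hence $Q(y;I_1,I_2)\le 2\tau$, and combined with $Q(y;I_1,I_2)\ge\lambda$ we get $\lambda\le 2\tau$, which contradicts \eqref{eq-lambda-prop-main-1} (which gives $\lambda>(4K+5)\tau\ge 2\tau$).

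There is one subtlety to handle: the projection matrices $P_I$ require $|I|\ge r+1$ to be well-defined (more precisely, for $U_{I,r}$ to have full column rank). If one of $I_1,I_2$ is too short, I would need to check that $H(y,I)=0$ in that degenerate case (the data on $I$ is trivially interpolated by a degree-$\le r$ polynomial), so the merge argument still goes through — merging can only help. I expect this bookkeeping around short intervals, together with making sure the competitor $\widetilde\Pi$ is still a valid interval partition in $\mathcal P_n$, to be the only real obstacle; the core inequality is a short computation once \Cref{lem:1}, \Cref{lem:2} and the event $\mathcal M(\tau)$ are in hand. Since this degenerate-interval issue presumably recurs throughout the proof of \Cref{prop:main1}, it may well have been dispatched once and for all earlier, in which case the proof here is essentially the three-line chain displayed above.
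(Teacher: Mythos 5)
Your proof is correct and follows essentially the same route as the paper's: merge $I_1,I_2$ into $J=I_1\cup I_2$, invoke optimality of $\widehat\Pi$ to get $Q(y;I_1,I_2)\ge\lambda$, then bound $Q(y;I_1,I_2)$ by $Q(\varepsilon;I_1,I_2)\lesssim\tau$ using that $\theta$ is a single polynomial on $J$, giving a contradiction with \eqref{eq-lambda-prop-main-1}. The only cosmetic differences are that the paper invokes the exact identity $Q(y;I_1,I_2)=Q(\varepsilon;I_1,I_2)$ (since $\theta_I=P_I\theta_I$ for $I\in\{I_1,I_2,J\}$) rather than your one-sided triangle-inequality bound, and it uses the looser constant $3\tau$ where you correctly noted $2\tau$ suffices; your remark about degenerate short intervals is a reasonable bookkeeping concern, though in the regime where the signal-strength condition forces $\Delta\ge r+1$ it does not bite.
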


\begin{proof}
We prove by contradiction, assuming there is no true change point in $J = I_1 \cup I_2$.  Let
	\[
		\widetilde{\Pi} = \widehat{\Pi} \cup \{J\} \setminus \{I_1, I_2\}.
	\]
	We have that
	\[
		0 \leq G(\widetilde{\Pi}, \lambda) - G(\widehat{\Pi}, \lambda) = -\lambda + Q(y; I_1,I_2) = -\lambda + Q(\varepsilon; I_1, I_2) \leq -\lambda + 3\tau,
	\]
	where the first inequality is due to the definition of $\widehat{\Pi}$, the second identity follows from the fact that $\theta$ is polynomial of order at most $r$ on $J$, and the last inequality holds on the event $\mathcal{M}$.  Therefore we reach a contradiction to \eqref{eq-lambda-prop-main-1}.
\end{proof}

\begin{lemma}[Part (c) in the proof of \Cref{prop:main1}]\label{lem-exactly-two}
Under all the assumptions in \Cref{prop:main1}, for any $I = [s, e) \in \widehat{\Pi}$, if there are exactly two true change points $\eta_k, \eta_{k+1} \in I$, then it holds that
	\[
		\eta_k - s \leq c\left(\frac{n^{2r_k}\max\{2\lambda + 12\tau, \, r + 1\}}{\kappa_k^2}\right)^{1/(2r_k + 1)}
	\]
	and
	\[
		e - \eta_{k+1} \leq c\left(\frac{n^{2r_{k+1}}\max\{2\lambda + 12\tau, \, r + 1\}}{\kappa_{k+1}^2}\right)^{1/(2r_{k+1} + 1)}.
	\]
\end{lemma}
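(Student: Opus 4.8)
The plan is to exploit the optimality of $\widehat{\Pi}$ against a competitor partition that subdivides $I$ at the two true change points it contains, and to read off the localisation bound from \Cref{prop:1}. I describe the argument bounding $\eta_k - s$; the bound on $e - \eta_{k+1}$ is obtained in exactly the same way after reflecting the roles of the two endpoints. First I would dispose of the short-interval case: if $\eta_k - s < r+1$ the asserted inequality is immediate (the same convention also handles the degenerate situation in which the companion interval introduced below is too short to support \Cref{prop:1}), so assume $\eta_k - s \ge r+1$.

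Next I would introduce the competitor. Let $\widetilde{\Pi}$ be obtained from $\widehat{\Pi}$ by replacing $I = [s, e)$ with the three intervals $[s, \eta_k)$, $[\eta_k, \eta_{k+1})$ and $[\eta_{k+1}, e)$, so that $|\widetilde{\Pi}| = |\widehat{\Pi}| + 2$. Optimality of $\widehat{\Pi}$ gives $0 \le G(\widetilde{\Pi}, \lambda) - G(\widehat{\Pi}, \lambda) = 2\lambda - D$, where
\[
 D = H(y, I) - H(y, [s, \eta_k)) - H(y, [\eta_k, \eta_{k+1})) - H(y, [\eta_{k+1}, e)).
\]
Applying \Cref{lem:1} to the partition $[s, e) = [s, \eta_{k+1}) \cup [\eta_{k+1}, e)$ yields $D \ge Q(y; [s, \eta_k), [\eta_k, \eta_{k+1}))$, hence $Q(y; [s, \eta_k), [\eta_k, \eta_{k+1})) \le 2\lambda$. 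On the event $\mathcal{M}(\tau)$ one has $Q(\varepsilon; [s, \eta_k), [\eta_k, \eta_{k+1})) \le \|P_{[s,\eta_k)}\varepsilon\|^2 + \|P_{[\eta_k,\eta_{k+1})}\varepsilon\|^2 \le 2\tau$, so \Cref{lem:2} together with elementary manipulations of $(\sqrt{a} - \sqrt{b})^2$ gives $Q(\theta; [s, \eta_k), [\eta_k, \eta_{k+1})) \le 2\lambda + 12\tau$ (this is where the constant in the statement enters; I do not try to optimise it). Since both $[s, \eta_k)$ and $[\eta_k, \eta_{k+1})$ are single polynomial pieces of $\theta$, and $\kappa_{k, r_k} = \kappa_k \ne 0$ because $r_k \in \mathcal{S}_k$, \Cref{prop:1} applied with $d = r_k$ gives
\[
 Q(\theta; [s, \eta_k), [\eta_k, \eta_{k+1})) \ge c_{\mathrm{poly}}\, \kappa_k^2\, \frac{\min\bigl\{(\eta_k - s)^{2r_k + 1},\, (\eta_{k+1} - \eta_k)^{2r_k + 1}\bigr\}}{n^{2r_k}}.
\]

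It then remains to chain the two displays, which splits into two cases. If $\eta_k - s \le \eta_{k+1} - \eta_k$, the minimum equals $(\eta_k - s)^{2r_k + 1}$, and rearranging produces exactly the claimed bound $\eta_k - s \le c\, \bigl(n^{2r_k}\max\{2\lambda + 12\tau, r+1\}/\kappa_k^2\bigr)^{1/(2r_k+1)}$. If instead $\eta_k - s > \eta_{k+1} - \eta_k$, then since $\eta_{k-1} < s$ we also have $\eta_k - \eta_{k-1} > \eta_{k+1} - \eta_k$, so $\Delta_k = \eta_{k+1} - \eta_k$ and the minimum equals $\Delta_k^{2r_k+1}$; the two displays then force $c_{\mathrm{poly}}\, \rho_{k, r_k} = c_{\mathrm{poly}}\, \kappa_k^2 \Delta_k^{2r_k+1} n^{-2r_k} \le 2\lambda + 12\tau$. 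But $r_k \in \mathcal{S}_k$ makes $\rho_{k, r_k}$ at least of order $\lambda$, and under \eqref{eq-lambda-prop-main-1}--\eqref{eq-rho-prop-main-1} this strictly exceeds $2\lambda + 12\tau$ (recall $\tau < \lambda$), a contradiction; so this case cannot occur, and the bound on $\eta_k - s$ follows. The argument for $e - \eta_{k+1}$ is symmetric.

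I expect the non-routine point to be precisely this last step: in the configuration where the left gap $\eta_k - s$ is larger than the inter-change spacing $\eta_{k+1} - \eta_k$, one cannot bound $\eta_k - s$ directly, but must instead argue that the signal-strength hypothesis rules the configuration out. The reason the competitor subdivides $I$ at both $\eta_k$ and $\eta_{k+1}$, rather than at $\eta_k$ alone, is that \Cref{prop:1} needs each side of the split to be a genuine polynomial piece — taking the larger interval $[\eta_k, e)$ would not qualify — and the only cost, one extra inserted breakpoint (hence $2\lambda$ in place of $\lambda$), is immaterial given \eqref{eq-lambda-prop-main-1}.
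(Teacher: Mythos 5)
Your proof is correct, and it follows the same high-level template as the paper's: produce a competitor partition, invoke optimality of $\widehat{\Pi}$, pass from $Q(y;\cdot,\cdot)$ to $Q(\theta;\cdot,\cdot)$ via \Cref{lem:2} and the event $\mathcal{M}(\tau)$, lower-bound $Q(\theta;\cdot,\cdot)$ by \Cref{prop:1}, and then use the signal-strength hypothesis to rule out the unfavourable branch of the minimum. The case analysis in your last paragraph, and the observation that $\eta_{k-1}<s$ forces $\Delta_k=\eta_{k+1}-\eta_k$ in the problematic configuration, is exactly the reasoning needed.

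The one genuine difference from the paper is the competitor. You subdivide $I$ at \emph{both} $\eta_k$ and $\eta_{k+1}$, paying $2\lambda$, and then apply \Cref{lem:1} once to collapse back to $Q(y;I_1,I_2)$. The paper instead subdivides $I$ at a \emph{single} change point, paying only $\lambda$; it then applies \Cref{lem:1} to the three-way split of $I$ to reduce $-H(y,I)+H(y,I_1)+H(y,I_2\cup I_3)$ to $-Q(y;I_2,I_3)$. Your rationale that one cannot use $[\eta_k,e)$ as the other piece because it is not a single polynomial segment is a misreading of what is required: \Cref{prop:1} is never applied to $[\eta_k,e)$; the reduction via \Cref{lem:1} delivers a $Q$ on two genuine polynomial pieces regardless of where the breakpoint is inserted. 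Concretely, the paper bounds $e-\eta_{k+1}$ with the competitor $\{[s,\eta_k),[\eta_k,e)\}$ and $\eta_k - s$ with the symmetric competitor $\{[s,\eta_{k+1}),[\eta_{k+1},e)\}$. Your version is perfectly valid and costs only a constant factor (you get $Q(y;I_1,I_2)\le 2\lambda$ and hence roughly $4\lambda+O(\tau)$ rather than $2\lambda+O(\tau)$), which is harmless here, but it is worth knowing the one-breakpoint competitor works too. One further small point: your appeal to ``$\rho_{k,r_k}$ at least of order $\lambda$'' is the right justification (via $r_k\in\mathcal{S}_k$); the cruder condition \eqref{eq-rho-prop-main-1} alone controls $\rho=\min_k\max_l\rho_{k,l}$, which need not equal $\rho_{k,r_k}$ when $r_k\neq l_k$, so pinning the argument on $\mathcal{S}_k$ as you did is the clean way to close that step.
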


\begin{proof}
Let $I_1 = [s, \eta_k)$, $I_2 = [\eta_k, \eta_{k+1})$, $I_3 = [\eta_{k+1}, e)$ and
	\[
		\widetilde{\Pi} = \widehat{\Pi} \cup \{I_1, I_2 \cup I_3\} \setminus \{I\}.
	\]
	It holds that
	\begin{align*}
		0 & \leq G(\widetilde{\Pi}, \lambda) - G(\widehat{\Pi}, \lambda) \leq \lambda - H(y, I) + H(y, I_1) + H(y, I_2 \cup I_3) \\
		& \leq \lambda - H(y, I_2) - H(y, I_3) + H(y, I_2 \cup I_3) = \lambda - Q(y; I_2, I_3)  \\
		& \leq \lambda - |\sqrt{Q(\theta; I_2, I_3)} - \sqrt{Q(\varepsilon; I_2, I_3)}|^2 \leq \lambda - Q(\theta; I_2, I_3)/2 + 2 Q(\varepsilon; I_2, I_3) \\
		& \leq \lambda + 6\tau - Q(\theta; I_2, I_3)/2,
	\end{align*}
	where the first inequality follows from the definition of $\widehat{\Pi}$, the third inequality follows from \Cref{lem:1}, the fourth inequality follows from \Cref{lem:2}, the fifth inequality follows from $(a-b)^2 > a^2/ - 2b^2$, for any $a, b \in \mathbb{R}$, and the last follows from the definition of the event $\mathcal{M}(\tau)$.
	
Applying \Cref{prop:1}, we conclude that 
	\[
		c_{\mathrm{poly}} \frac{\kappa_{k + 1}^2}{n^{2r_{k + 1}}} \min\{\Delta^{2r_{k + 1} + 1}, \, |I_3|^{2r_{k + 1} + 1}\}\leq \max\{12\tau + 2\lambda, \, r + 1\}.
	\]
	It follows from \eqref{eq-rho-prop-main-1}, we have that
	\[
		e - \eta_{k+1} \leq c\left(\frac{n^{2r_{k+1}}\max\{2\lambda + 12\tau, \, r + 1\}}{\kappa_{k+1}^2}\right)^{1/(2r_{k+1} + 1)}.
	\]
	The same arguments can lead to the corresponding result on $\eta_k - s$ and complete the proof.
\end{proof}

\begin{lemma}[Part (d) in the proof of \Cref{prop:main1}]\label{lem-case-c}
Under all the assumptions in \Cref{prop:main1}, for any $I = [s, e) \in \widehat{\Pi}$, if there is exactly one true change point $\eta_k \in I$, then
	\[
		\min\{e - \eta_k, \, \eta_k - s\} \leq c\left(\frac{n^{2r_k}\max\{12\tau + 2\lambda, \, r + 1\}}{\kappa_k^2}\right)^{1/(2r_k + 1)}.
	\]
\end{lemma}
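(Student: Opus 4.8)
The plan is to mimic the strategy of the preceding lemmas: exhibit a competitor partition $\widetilde{\Pi}$ obtained from $\widehat{\Pi}$ by a small local modification, use the optimality of $\widehat{\Pi}$ to extract an inequality of the form $0 \le G(\widetilde{\Pi},\lambda) - G(\widehat{\Pi},\lambda)$, and then convert this into a lower bound on a quantity $Q(\theta;\cdot,\cdot)$ which, via \Cref{prop:1} and \Cref{lem-shen2020}, forces $\min\{e-\eta_k,\,\eta_k-s\}$ to be small. Concretely, write $d = \min\{e - \eta_k,\, \eta_k - s\}$ and without loss of generality suppose $d = \eta_k - s$ (the other case is symmetric). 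Set $I_1 = [s,\eta_k)$ and $I_2 = [\eta_k, e)$ and consider the refined partition
\[
	\widetilde{\Pi} = \widehat{\Pi} \cup \{I_1, I_2\} \setminus \{I\}.
\]
Since $\widehat\Pi$ minimises $G(\cdot,\lambda)$, we have $0 \le G(\widetilde{\Pi},\lambda) - G(\widehat{\Pi},\lambda) = \lambda - Q(y; I_1, I_2)$.

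Next I would lower bound $Q(y;I_1,I_2)$ from below. By \Cref{lem:2}, $Q(y;I_1,I_2) \ge \big|\sqrt{Q(\theta;I_1,I_2)} - \sqrt{Q(\varepsilon;I_1,I_2)}\big|^2 \ge Q(\theta;I_1,I_2)/2 - 2 Q(\varepsilon;I_1,I_2)$, and on the event $\mathcal{M}(\tau)$ we have $Q(\varepsilon;I_1,I_2) \le 3\tau$ (since $Q(\varepsilon;I_1,I_2) = \|P_{I_1}\varepsilon_{I_1}\|^2 + \|P_{I_2}\varepsilon_{I_2}\|^2 - \|P_{I_1\cup I_2}\varepsilon_{I_1\cup I_2}\|^2 \le \|P_{I_1}\varepsilon_{I_1}\|^2 + \|P_{I_2}\varepsilon_{I_2}\|^2 \le 2\tau$; if one wants the cleaner bound $3\tau$ matching the statement, bound the three projection terms crudely). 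Combining, $0 \le \lambda + 6\tau - Q(\theta;I_1,I_2)/2$, i.e. $Q(\theta;I_1,I_2) \le 2\lambda + 12\tau$. Since $\theta$ restricted to $I_1\cup I_2 = I$ is exactly a degree-$\le r$ polynomial on each side of $\eta_k$ with $d$th coefficient gap $\kappa_{k,d}$ for each $d$, \Cref{prop:1} applied with $d = r_k$ gives
\[
	c_{\mathrm{poly}} \,\frac{\kappa_k^2}{n^{2r_k}} \min\{(\eta_k - s)^{2r_k+1},\, (e - \eta_k)^{2r_k+1}\} \le Q(\theta;I_1,I_2) \le 2\lambda + 12\tau.
\]
Because we assumed $d = \eta_k - s = \min\{e-\eta_k, \eta_k-s\}$, the left side is $c_{\mathrm{poly}}\, \kappa_k^2 n^{-2r_k} d^{\,2r_k+1}$, and solving for $d$ yields $d \le (c_{\mathrm{poly}}^{-1})^{1/(2r_k+1)} \big(n^{2r_k}(2\lambda + 12\tau)/\kappa_k^2\big)^{1/(2r_k+1)}$, which absorbing $c_{\mathrm{poly}}^{-1/(2r_k+1)}$ and the max with $r+1$ into the constant $c$ gives the claimed bound. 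One should also check that \Cref{prop:1} is applicable, i.e. $\min\{|I_1|,|I_2|\} \ge r+1$; if $d < r+1$ there is nothing to prove since the stated bound is at least of order $(r+1)^{1/(2r_k+1)} \ge 1$ up to the constant, so one may dispatch that boundary case first.

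The main subtlety — not really an obstacle — is making sure the polynomial order $r_k$ used in \Cref{prop:1} is a valid choice, i.e. that $r_k \in \{l : a_{k,l} \ne b_{k,l}\}$. This holds because $r_k \in \mathcal{S}_k$ by definition in \eqref{eq-active-rk-kk}, and membership in $\mathcal{S}_k$ forces $\rho_{k,r_k} \ge c_{\mathrm{signal}}\lambda > 0$, hence $\kappa_{k,r_k} = \kappa_k > 0$. A second point worth a sentence is that the $\min$ in the \Cref{prop:1} bound is exactly why we only control $\min\{e-\eta_k,\eta_k-s\}$ here rather than both distances separately: when $I$ contains a single change point, one of the two sub-intervals can be arbitrarily long, and the local refinement argument only ever ``sees'' the shorter side. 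Everything else is a routine repetition of the computation in \Cref{lem-exactly-two}.
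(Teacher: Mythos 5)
Your proof is correct and follows essentially the same route as the paper: introduce the refined partition $\widetilde{\Pi} = \widehat{\Pi} \cup \{[s,\eta_k),[\eta_k,e)\} \setminus \{I\}$, use optimality of $\widehat{\Pi}$ together with \Cref{lem:2} and the event $\mathcal{M}(\tau)$ to bound $Q(\theta;I_1,I_2) \le 2\lambda + 12\tau$, then invoke \Cref{prop:1} with order $r_k$. Your added remarks — that one must check $\min\{|I_1|,|I_2|\}\ge r+1$ before applying \Cref{prop:1} (else the $r+1$ branch of the $\max$ takes over), and that $r_k\in\mathcal{S}_k$ guarantees $\kappa_{k,r_k}>0$ so $r_k$ is an admissible order — are sound and in fact patch small gaps the paper's own write-up leaves implicit.
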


\begin{proof}
Let $I_1 = [s, \eta_k)$, $I_2 = [\eta_k, e)$ and
	\[
		\widetilde{\Pi} = \widehat{\Pi} \cup \{I_1, I_2\} \setminus \{I\}.
	\]
	It holds that
	\begin{align*}
		0 & \leq G(\widetilde{\Pi}, \lambda) - G(\widehat{\Pi}, \lambda) = \lambda - H(y, I) + H(y, I_1) + H(y, I_2) = \lambda - Q(y;I_1,I_2) \\
		& \leq \lambda - |\sqrt{Q(\theta; I_1, I_2)} - \sqrt{Q(\varepsilon; I_1, I_2)}|^2 \leq \lambda - Q(\theta; I_2, I_3)/2 + 2 Q(\varepsilon; I_2, I_3) \\
		& \leq \lambda + 6\tau - Q(\theta; I_2, I_3)/2,
	\end{align*}
	where the first inequality follows from the definition of $\widehat{\Pi}$, the second inequality follows from \Cref{lem:2}, the third inequality follows from $(a-b)^2 > a^2/ - 2b^2$, for any $a, b \in \mathbb{R}$, and the last follows from the definition of the event $\mathcal{M}(\tau)$.

Applying \Cref{prop:1}, we conclude that 
	\[
		c_{\mathrm{poly}} \frac{\kappa_k^2}{n^{2r_k}} \min\{|I_1|^{2r_k + 1}, \, |I_2|^{2r_k + 1}\}\leq \max\{12\tau + 2\lambda, \, r + 1\}.
	\]
\end{proof}

\begin{lemma}[Part (e) in the proof of \Cref{prop:main1}]\label{sec-proof-e}
Under all the assumptions in \Cref{prop:main1}, assuming that $|\Pi| \leq |\widehat{\Pi}| \leq 3 |\Pi|$, it holds that $|\widehat{\Pi}| = |\Pi|$.
\end{lemma}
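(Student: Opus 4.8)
The plan is to argue by contradiction, supposing that $|\Pi| < |\widehat{\Pi}| \le 3|\Pi|$, and to produce a candidate partition strictly improving the objective $G(\cdot,\lambda)$, contradicting the optimality of $\widehat{\Pi}$. The natural candidate is $\Pi$ itself: since $\Pi$ is the true partition, each interval of $\Pi$ carries a genuine degree-$r$ polynomial mean, so $H(\theta, I)=0$ for every $I \in \Pi$ and hence the residual sum of squares $\sum_{I\in\Pi} H(y,I)$ is purely a noise quantity. The optimality of $\widehat\Pi$ gives
\[
	\sum_{I\in\widehat\Pi} H(y,I) + \lambda|\widehat\Pi| \;\le\; \sum_{I\in\Pi} H(y,I) + \lambda|\Pi|,
\]
so I would rearrange this into
\[
	\lambda\bigl(|\widehat\Pi| - |\Pi|\bigr) \;\le\; \sum_{I\in\Pi} H(y,I) - \sum_{I\in\widehat\Pi} H(y,I).
\]
The left-hand side is at least $\lambda$ (as the difference of cardinalities is a positive integer), so it suffices to show the right-hand side is strictly smaller than $\lambda$ under the stated hypotheses, which forces $|\widehat\Pi| = |\Pi|$.

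The key step is to control $\sum_{I\in\Pi} H(y,I) - \sum_{I\in\widehat\Pi} H(y,I)$ on the event $\mathcal{M}(\tau)$. I would bound the positive term first: on each $I\in\Pi$ the mean is a degree-$r$ polynomial, so $H(y,I) = \|\varepsilon_I - P_I\varepsilon_I\|^2 \le \|\varepsilon_I\|^2$, but more usefully $H(y,I) \le H(\theta,I) + 2\langle \text{cross}\rangle + \ldots$; a cleaner route is to use $H(y,I) = \|y_I\|^2 - \|P_I y_I\|^2 \le \|\varepsilon_I\|^2 = \|y_I - \theta_I\|^2$ after noting $\theta_I$ lies in the column space of $U_{I,r}$. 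Actually the sharp tool is: for any interval $I$ with polynomial mean, $H(y,I) \le H(\theta,I) + \|P_I\varepsilon_I\|^2 \cdot(\text{const})$; combined with $\mathcal{M}(\tau)$ this gives $\sum_{I\in\Pi} H(y,I) \le |\Pi|\, c\,\tau$. For the subtracted term, I would use \Cref{lem:1} (the refinement inequality): since every interval of $\widehat\Pi$ can be subdivided by the points of $\Pi\cup\eta(\widehat\Pi)$, and conversely, I want a lower bound on $\sum_{I\in\widehat\Pi}H(y,I)$ — but since each $H(y,I)\ge 0$, the crude bound $\sum_{I\in\widehat\Pi}H(y,I)\ge 0$ may already suffice, giving the right-hand side $\le |\Pi|\,c\,\tau$, which is $< \lambda$ precisely by the choice $\lambda > (4K+5)\tau$ when $c \le 4$ and $|\Pi| = K+1$.

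The main obstacle I anticipate is getting the constants to line up cleanly: the bound $\sum_{I\in\Pi}H(y,I) \le (\text{something})\cdot K\tau$ needs the ``something'' to be small enough that it stays below $\lambda = (4K+5)\tau$ after accounting for the $|\widehat\Pi| - |\Pi| \ge 1$ slack, and one must be careful that $H(y,I)$ for an interval of the true partition is controlled by the \emph{per-interval} noise projection term $\|P_I\varepsilon_I\|^2 \le \tau$ rather than by $\|\varepsilon_I\|^2$, which would be too large — this is exactly why the event $\mathcal{M}(\tau)$ is phrased in terms of $\|P_I\varepsilon_I\|^2$. A secondary subtlety is ensuring the argument genuinely uses the hypothesis $|\widehat\Pi| \le 3|\Pi|$; I expect this enters only to guarantee that the number of intervals of $\widehat\Pi$ (and hence the number of noise terms one might need to sum, if one does not use the crude nonnegativity bound) stays $O(K)$, so that any residual error terms remain of order $K\tau$ and do not overwhelm $\lambda$. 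Once these bookkeeping points are settled, the contradiction is immediate.
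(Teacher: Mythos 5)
There is a genuine gap in your middle step. You claim the ``sharp tool'' $H(y,I) \le H(\theta,I) + c\,\|P_I\varepsilon_I\|^2$, which would give $\sum_{I\in\Pi}H(y,I)\le |\Pi|\,c\,\tau$ after invoking $\mathcal{M}(\tau)$. This inequality is false. For $I\in\Pi$ the mean $\theta_I$ lies in the column space of $U_{I,r}$, so $H(y,I)=H(\varepsilon,I)=\|\varepsilon_I\|^2 - \|P_I\varepsilon_I\|^2$, which is the \emph{unprojected} bulk of the noise and is typically of order $|I|\sigma^2$, nothing like $\tau\asymp\sigma^2\log n$. Consequently $\sum_{I\in\Pi}H(y,I)$ is of order $n\sigma^2\gg (4K+5)\tau$, and the crude lower bound $\sum_{I\in\widehat\Pi}H(y,I)\ge 0$ is far too weak; the difference $\sum_{I\in\Pi}H(y,I)-\sum_{I\in\widehat\Pi}H(y,I)$ cannot be controlled this way.

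The missing idea, which the paper uses, is to route the comparison through the common refinement $\widehat\Pi\cap\Pi$. By \Cref{lem:1}, $S(y,\widehat\Pi)\ge S(y,\widehat\Pi\cap\Pi)$, and because every interval of $\widehat\Pi\cap\Pi$ also carries a genuine polynomial mean, $S(y,\widehat\Pi\cap\Pi)=S(\varepsilon,\widehat\Pi\cap\Pi)$. Combined with $S(y,\Pi)\le\|\varepsilon\|^2$, the quantity that needs to be small is not $S(y,\Pi)$ itself but the \emph{gap}
\[
\|\varepsilon\|^2 - S(\varepsilon,\widehat\Pi\cap\Pi) \;=\; \sum_{I\in\widehat\Pi\cap\Pi}\|P_I\varepsilon_I\|^2 \;\le\; |\widehat\Pi\cap\Pi|\,\tau \;\le\; (\widehat K + K + 2)\,\tau \;\le\; (4K+5)\,\tau,
\]
where the last step finally uses $|\widehat\Pi|\le 3|\Pi|$. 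This yields $(\widehat K - K)\lambda \le (4K+5)\tau < \lambda$, forcing $\widehat K = K$. You gestured at the refinement inequality and correctly anticipated where the hypothesis $|\widehat\Pi|\le 3|\Pi|$ enters, but you abandoned the refinement in favour of the false per-interval bound; restoring that step is essential, since the huge term $\|\varepsilon\|^2$ has to cancel between the two sides rather than be bounded in absolute terms.
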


\begin{proof}
To ease notation, for any interval partition $\mathcal{P}$ of $\{1, \ldots, n\}$ and any $v \in \mathbb{R}^n$, we let
	\[
		S(v, \mathcal{P}) = \sum_{I \in \mathcal{P}} H(v, I).
	\]
	Using this notation, we first note that
	\[
		\|\varepsilon\|^2 \geq S(y, \Pi),
	\] 
	since for any $I \in \Pi$, $H(y, I) = H(\varepsilon, I) \leq \|\varepsilon_I\|^2$.  

Let $\widehat{\Pi} \cap \Pi$	 be the intersection of the partitions $\widehat{\Pi}$ and $\Pi$.  It then holds that
	\begin{align}
		\|\varepsilon\|^2 + (K + 1) \lambda \geq & S(y, \Pi) + (K + 1) \lambda \geq S(y, \widehat{\Pi}) + (\widehat{K} + 1) \lambda \nonumber \\ 
		\geq & S(y, \widehat{\Pi} \cap \Pi) + (\widehat{K} + 1) \lambda, \label{eq-prf-e-1}
	\end{align}
	where the second inequality follows from the definition of $\widehat{\Pi}$ and the last inequality is due to \Cref{lem:1}. 
	
On the other hand, we have that
	\begin{equation}\label{eq-prf-e-2}
		\|\varepsilon\|^2 - S(y, \widehat{\Pi} \cap \Pi) = \|\varepsilon\|^2 - S(\varepsilon, \widehat{\Pi} \cap \Pi) \leq \big(\widehat{K} + K + 2\big) \tau,
	\end{equation}
	where the identity is due to the fact that $\theta$ is a polynomial of order at most $r$ on every member of $\widehat{\Pi} \cap \Pi$, and the second inequality holds on the event $\mathcal{M}(\tau)$, noticing that $|\widehat{\Pi} \cap \Pi| \leq \widehat{K} + K + 2$.
	
Combining \eqref{eq-prf-e-1} and \eqref{eq-prf-e-2}, we have that		
	\[
		\big(\widehat{K} - K\big) \lambda \leq \big(\widehat{K} + K + 2\big) \tau \leq \big(4K + 5\big) \tau,
	\]
	where the last inequality is due to $|\widehat{\Pi}| \leq 3|\Pi|$.  Since we also have $|\widehat{\Pi}| \geq |\Pi|$, the last display implies that $|\widehat{K}| = K$, otherwise it contradicts with \eqref{eq-lambda-prop-main-1}.
\end{proof}

\subsection{The final estimators $\{\widehat{\eta}_k\}_{k = 1}^{\widehat{K}}$}

The following lemma shows that our update step in \Cref{algorithm:PDP} can significantly improve the initial estimators.

\begin{lemma}\label{lem:refine}
Let data $\{y_i\}_{i = 1}^n$ satisfy \Cref{assume-model}.  For any set $\{\nu_k\}_{k = 1}^K$ satisfying
	\[
		\max_{k = 1, \ldots, K}|\nu_k - \eta_k| \leq \Delta/5,
	\]
	with $\nu_0 = 1$ and $\nu_{K+1} = n+1$, define
	\[
		s_k = \nu_{k-1}/2 + \nu_k/2, \;\; e_k = \nu_k/2 + \nu_{k+1}/2 \;\; \mbox{and} \;\; I_k = [s_k, e_k), \;\; \forall k \in \{1, \ldots, K\}.
	\]
	Let
	\[
		\widehat{\eta}_k = \argmin_{t \in I_k \setminus \{s_k\}} \left\{H(y, [s_k, t)) + H(y, [t, e_k))\right\}, \quad \forall k \in \{1, \ldots, K\}.
	\]
	
For any $\tau > 0$, if 
	\begin{equation}\label{eq-rho-lemb7}
		\rho > 2\times 5^{2r + 1} \tau, 		
	\end{equation}
	then on the event $\mathcal{M}(\tau)$, it holds that for an absolute constant $c > 0$,
	\[
		|\widehat{\eta}_k - \eta_k| \leq c \left\{\frac{n^{2r_k} \tau}{\kappa_k^2}\right\}^{1/(2r_k+1)}.
	\]
\end{lemma}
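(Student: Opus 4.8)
The plan is to leverage the defining optimality of $\widehat\eta_k$ in \eqref{eq-refined-estimators} together with the geometry forced by $\max_j|\nu_j - \eta_j| \le \Delta/5$. First I would record the deterministic facts about the window $I_k = [s_k,e_k)$: a short computation using $\nu_{k-1}\le\eta_{k-1}+\Delta/5$, $|\nu_k-\eta_k|\le\Delta/5$, $\nu_{k+1}\ge\eta_{k+1}-\Delta/5$ and $\Delta\le\Delta_k$ shows $\eta_{k-1}<s_k$, $e_k<\eta_{k+1}$ and, more precisely,
\[
	\min\{\eta_k-s_k,\,e_k-\eta_k\} \ge \tfrac{3}{10}\Delta_k.
\]
In particular $I_k$ contains exactly one true change point, namely $\eta_k$; the point $\eta_k$ is an admissible candidate ($\eta_k\in I_k\setminus\{s_k\}$); and $\theta:=\mathbb{E}(y)$ restricted to $I_k$ is a polynomial of degree $\le r$ on $A:=[s_k,\eta_k)$ and a \emph{different} polynomial of degree $\le r$ on $B:=[\eta_k,e_k)$.

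Next I would turn the optimality of $\widehat\eta_k$ into a workable inequality. Assume $\widehat\eta_k\ne\eta_k$ (otherwise there is nothing to prove) and, without loss of generality, $\widehat\eta_k>\eta_k$; set $d_k=\widehat\eta_k-\eta_k$, $B'=[\eta_k,\widehat\eta_k)$, $\widehat B=[\widehat\eta_k,e_k)$, so that $[s_k,\widehat\eta_k)=A\cup B'$ and $[\eta_k,e_k)=B'\cup\widehat B$. Expanding $H(y,A\cup B')=H(y,A)+H(y,B')+Q(y;A,B')$ and $H(y,B'\cup\widehat B)=H(y,B')+H(y,\widehat B)+Q(y;B',\widehat B)$ in the optimality inequality $H(y,[s_k,\widehat\eta_k))+H(y,[\widehat\eta_k,e_k))\le H(y,[s_k,\eta_k))+H(y,[\eta_k,e_k))$ and cancelling the common terms $H(y,A)$, $H(y,B')$, $H(y,\widehat B)$ leaves the clean statement
\[
	Q(y;A,B') \le Q(y;B',\widehat B).
\]
The gain is that $B'$ and $\widehat B$ both lie entirely to the right of $\eta_k$, so $\theta$ is a single degree-$\le r$ polynomial on $B'\cup\widehat B$ and $Q(\theta;B',\widehat B)=0$, while $A$ and $B'$ straddle $\eta_k$.

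Then I would bound the two sides on the event $\mathcal{M}(\tau)$. For the right-hand side, the triangle-inequality form of \Cref{lem:2} (immediate from its proof) and $Q(\theta;B',\widehat B)=0$ give $\sqrt{Q(y;B',\widehat B)}\le\sqrt{Q(\varepsilon;B',\widehat B)}$, and $Q(\varepsilon;B',\widehat B)\le\|P_{B'}\varepsilon_{B'}\|^2+\|P_{\widehat B}\varepsilon_{\widehat B}\|^2\le 2\tau$, so $Q(y;B',\widehat B)\le 2\tau$. For the left-hand side, \Cref{lem:2} gives $\sqrt{Q(y;A,B')}\ge\sqrt{Q(\theta;A,B')}-\sqrt{2\tau}$; were $Q(\theta;A,B')>8\tau$ this would force $Q(y;A,B')>2\tau$, contradicting the display above, so $Q(\theta;A,B')\le 8\tau$. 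Since $\kappa_k=\kappa_{k,r_k}>0$, i.e.\ $a_{k,r_k}\ne b_{k,r_k}$, \Cref{prop:1} with $d=r_k$ (valid once $\min\{|A|,|B'|\}\ge r+1$) yields
\[
	c_{\mathrm{poly}}\,\kappa_k^2\,\frac{\min\{|A|^{2r_k+1},\,|B'|^{2r_k+1}\}}{n^{2r_k}} \le Q(\theta;A,B') \le 8\tau.
\]
Since $|A|=\eta_k-s_k\ge\tfrac{3}{10}\Delta_k$ and the signal at order $r_k$ is strong (this is where the hypothesis $\rho>2\cdot5^{2r+1}\tau$ and the role of $r_k$ enter, keeping $c_{\mathrm{poly}}\kappa_k^2(\tfrac{3}{10}\Delta_k)^{2r_k+1}n^{-2r_k}$ above $8\tau$), the minimum in the display is attained at $|B'|$; hence $d_k=|B'|\le(8/c_{\mathrm{poly}})^{1/(2r_k+1)}(n^{2r_k}\tau/\kappa_k^2)^{1/(2r_k+1)}$, which is the claim. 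The mirror case $\widehat\eta_k<\eta_k$ is identical, and $d_k\le r$ is trivially within the bound.

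I expect the crux to be the reduction in the second paragraph. Applying \Cref{lem:2} directly to the raw optimality statement $Q(y;[s_k,\widehat\eta_k),[\widehat\eta_k,e_k))\ge Q(y;[s_k,\eta_k),[\eta_k,e_k))$ leaves a cross term of size $\sqrt{\tau\,H(\theta,I_k)}$, and $H(\theta,I_k)$ --- the total energy of the break of $\theta$ over $I_k$, of order $\rho_k\gg\tau$ --- is far too large to produce the stated rate. Re-expressing the optimality as $Q(y;A,B')\le Q(y;B',\widehat B)$, with the second term supported where $\theta$ has no break so that its noise contribution collapses to $O(\tau)$, is what makes the argument go through; the remaining care is the book-keeping that $\min\{|A|,|B'|\}=|B'|$, which rests on $\min\{\eta_k-s_k,\,e_k-\eta_k\}\ge\tfrac{3}{10}\Delta_k$ and on strong signal at order $r_k$.
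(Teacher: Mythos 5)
Your proposal is correct and follows essentially the same route as the paper: establish the window geometry, exploit the optimality of $\widehat{\eta}_k$ to reduce to the comparison $Q(y;J_2,J_3)\le Q(y;J_1,J_2)$ with the three-piece decomposition, upper bound the right-hand side by $O(\tau)$ because $\theta$ has no break on $J_1\cup J_2$, and lower bound the left-hand side via \Cref{prop:1} at order $r_k$ after using the geometry to force $\min\{|J_2|,|J_3|\}=|J_2|$. You are in fact a touch more careful than the paper's write-up in converting $Q(y;J_2,J_3)\le O(\tau)$ into $Q(\theta;J_2,J_3)\le O(\tau)$ via \Cref{lem:2}, but the argument is otherwise the same.
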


\begin{proof}
For any $k \in \{1, \ldots, K\}$, we have that
	\begin{align*}
		\eta_k - s_k =& \eta_k - \nu_k + \frac{\nu_k - \nu_{k - 1}}{2}\\
		=& \eta_k - \nu_k +  \frac{\nu_k - \eta_{k}}{2} +  \frac{\eta_k - \eta_{k - 1}}{2} +  \frac{\eta_{k - 1} - \nu_{k - 1}}{2} \\
		\geq & -\frac{\Delta}{5} - \frac{\Delta}{10} + \frac{\Delta}{2} - \frac{\Delta}{10} = \frac{\Delta}{10}.
	\end{align*}
	and
	\begin{align*}
		s_k - \eta_{k-1} =&  \frac{\nu_k - \nu_{k - 1}}{2} + \nu_{k - 1} - \eta_{k - 1}\\
		=& \frac{\nu_k - \eta_{k}}{2} + \frac{\eta_k - \eta_{k - 1}}{2} + \frac{\eta_{k - 1} - \nu_{k - 1}}{2} + \nu_{k - 1} - \eta_{k - 1} \\
		\geq & -\frac{\Delta}{10} + \frac{\Delta}{2} - \frac{\Delta}{10} - \frac{\Delta}{5} = \frac{\Delta}{10}.
	\end{align*}
	Using identical arguments, it also holds that $\min\{e_k - \eta_k, \eta_{k + 1} - e_k\} \geq \Delta/10$.  
	
Without loss of generality, we assume that
	\[
		s_k < \widehat{\eta}_k < \eta_k < e_k.
	\]
	Let $J_1 = [s_k, \widetilde{\eta}_k)$, $J_2 = [\widetilde{\eta}_k, \eta_k)$ and $J_3 = [\eta_k, e_k)$.  By the definition of $\widetilde{\eta}_k$, we have that
	\begin{align*}
		& H(y, J_1 \cup J_2) + H(y, J_3) \geq H(y, J_1) + H(y, J_2 \cup J_3) \\
		= & H(y, J_1) + H(y, J_2) + H(y, J_3) + Q(y; J_2, J_3).
	\end{align*} 
	We then have that
	\begin{align*}
	c_{poly} \frac{\kappa_{k}^2}{n^{2r_k}} \min\{|J_2|^{2r_k + 1}, |J_3|^{2r_k + 1}\} \leq& Q(y;J_2,J_3) \leq Q(y; J_1, J_2) = Q(\varepsilon; J_1, J_2)\\
	\leq& 3 \tau,
	\end{align*}
	where the first inequality is due to Proposition~\ref{prop:1}.  Since $|J_3| \geq \frac{\Delta}{10}$, the final claim follows from \eqref{eq-rho-lemb7}.
\end{proof}

\section{Proofs of Lemmas~\ref{lem-lb-2}, \ref{lem-lb-1} and \ref{lem-lb-3333}}\label{sec-proofs-lb}

\begin{proof}[Proof of \Cref{lem-lb-2}]
Let $P_0$ denote the joint distribution of the independent random variables $\{y_i\}_{i = 1}^n$ such that
	\[
		y_i \sim \begin{cases}
 			\mathcal{N}(0, \sigma^2), & i \in \{1, \ldots, \Delta\} \\
 			\mathcal{N}(\kappa (i/n - \Delta/n)^r, \sigma^2), & i \in \{\Delta + 1, \ldots, n\}. 
 		\end{cases}
	\]
	Let $P_1$ denote the joint distribution of the independent random variables $\{z_i\}_{i = 1}^n$ such that
	\[
		z_i \sim \begin{cases}
 			\mathcal{N}(0, \sigma^2), & i \in \{1, \ldots, \Delta + \delta\} \\
 			\mathcal{N}(\kappa (i/n - \Delta/n)^r, \sigma^2), & i \in \{\Delta + \delta +  1, \ldots, n\},
 		\end{cases}
	\]
	where $\delta$ is a positive integer no larger than $\Delta$.  We further assume that $\sigma^2 \log(n) \kappa^{-2} \geq 1$ and $\Delta \leq n/4$.
	
As for $P_0$, it is easy to see that 
	\[
		\mathbb{E}(y_i) = \begin{cases}
 			0, & i \in \{1, \ldots, \Delta\}, \\
 			\kappa \left\{(i-\Delta)/n\right\}^r, & i \in \{\Delta +  1, \ldots, n\},
 		\end{cases}
	\]
	which implies that the change point of $P_0$ satisfies $\eta(P_0) = \Delta + 1$.  Recalling \eqref{eq-active-rk-kk}, we also know that the corresponding order $r_1$ equals $r$, and the jump size $\kappa_1 = \kappa$. 
	
As for $P_1$, we have that
	\[
		\mathbb{E}(y_i) = \begin{cases}
 			0, & i \in \{1, \ldots, \Delta + \delta\}, \\
 			\kappa \left\{(i-\Delta)/n\right\}^r = \kappa \sum_{l = 0}^r {r \choose l} \left(\frac{i-\Delta-\delta}{n}\right)^{r-l} \left(\frac{\delta}{n}\right)^l, & i \in \{\Delta + \delta + 1, \ldots, n\},
 		\end{cases}
	\]
	which implies that the change point of $P_1$ satisfies $\eta(P_1) = \Delta + \delta + 1$.  Recalling \Cref{def-jump-size} and \eqref{eq-active-rk-kk}, we have that for $l \in \{0, \ldots, r\}$, 
	\[
		\rho_{1, l} = \kappa^2\left(\frac{\delta}{n}\right)^{2r-2l} (\Delta + \delta)^{2l+1} n^{-2l}
	\]
	and
	\begin{equation}\label{eq-pf-lb-333-1}
		\left(\frac{\sigma^2 \log(n)}{\rho_{1, l}}\right)^{1/(2l+1)} = \left(\frac{\sigma^2 \log(n)}{\kappa^2}\right)^{1/(2l+1)} \frac{1}{\Delta + \delta} \delta^{\frac{2l-2r}{2l+1}} n^{\frac{2r}{2l+1}}.
	\end{equation}
	In the above, we have used the condition that $\delta \leq \Delta < n/4$.  Due to the assumption that $\sigma^2 \log(n) \kappa^{-2} \geq 1$, we have that \eqref{eq-pf-lb-333-1} is a decreasing function of the order $r$.  Therefore by the definition in \eqref{eq-active-rk-kk}, we have that the corresponding order $r_1 = r$ and the jump size $\kappa_1 = \kappa$. 
	
It then follows from Le Cam's lemma \citep[e.g.][]{yu1997assouad}, a standard reduction of estimation to two point testing, and Lemma~2.6 in \cite{Tsybakov2009}, a form of Pinsker's inequality, that
	\begin{align*}
		& \inf_{\widehat{\eta}} \sup_{P \in \mathcal{Q}} \mathbb{E}_P(|\widehat{\eta} - \eta|) \geq \delta (1 - d_{\mathrm{TV}}(P_0, P_1)) \geq \frac{\delta}{2} \exp\left( -\frac{\kappa^2}{\sigma^2 n^{2r}} \sum_{i = \Delta + 1}^{\Delta + \delta}(i - \Delta)^{2r} \right) \\
		= & \frac{\delta}{2} \exp\left( -\frac{\kappa^2}{\sigma^2 n^{2r}} \sum_{i = 1}^{\delta}i^{2r} \right) \geq \frac{\delta}{2} \exp\left( -\frac{\kappa^2}{\sigma^2 n^{2r}} \int_1^{\delta + 1} x^{2r}\,dx \right)\\
		\geq& \frac{\delta}{2} \exp\left\{ -\frac{\kappa^2 (\delta+1)^{2r+1}}{(2r+1)\sigma^2n^{2r}}  \right\}
		\geq \frac{\delta}{2} \exp\left\{-\frac{c\kappa^2\delta^{2r+1}}{\sigma^2 n^{2r}} \right\}.
	\end{align*}
	We set 
	\[
		\delta = \max\left\{\left[\frac{c \sigma^2 n^{2r}}{\kappa^2}\right]^{1/(2r+1)},\, 1\right\}
	\]
	and complete the proof.
\end{proof}

\begin{proof}[Proof of \Cref{lem-lb-1}]
Let $P_0$ denote the joint distribution of the independent random variables $\{y_i\}_{i = 1}^n$ such that
	\[
		y_i \sim \begin{cases}
 			\mathcal{N}(\kappa (i/n - \Delta/n)^r, \sigma^2), & i \in \{1, \ldots, \Delta\} \\
 			\mathcal{N}(0, \sigma^2), & i \in \{\Delta + 1, \ldots, n\}. 
 		\end{cases}
	\]
	Let $P_1$ denote the joint distribution of the independent random variables $\{z_i\}_{i = 1}^n$ such that
	\[
		z_i \sim \begin{cases}
 			\mathcal{N}(0, \sigma^2), & i \in \{1, \ldots, n-\Delta\} \\
 			\mathcal{N}(\kappa (i/n - 1 + \Delta/n)^r, \sigma^2), & i \in \{n- \Delta + 1, \ldots, n\}. 
 		\end{cases}
	\]
	
As for $P_0$, it is easy to see that 
	\[
		\mathbb{E}(y_i) = \begin{cases}
 			\kappa \left\{(i-\Delta)/n\right\}^r, & i \in \{1, \ldots, \Delta\}, \\
 			0, & i \in \{\Delta + 1, \ldots, n\},
 		\end{cases}
	\]
	which implies that the change point of $P_0$ satisfies $\eta(P_0) = \Delta + 1$. Recalling \eqref{eq-active-rk-kk}, we also know that the corresponding order $r_1$ equals $r$, and the jump size $\kappa_1 = \kappa$. 

As for $P_1$, it is easy to see that 
	\[
		\mathbb{E}(y_i) = \begin{cases}
 			0, & i \in \{1, \ldots, n-\Delta\}, \\
 			\kappa \left\{(i-n+\Delta)/n\right\}^r, & i \in \{n-\Delta + 1, \ldots, n\},
 		\end{cases}
	\]
	which implies that the change point of $P_1$ satisfies $\eta(P_1) = n-\Delta + 1$.  Recalling \Cref{def-jump-size}, we also know that the corresponding smallest order $r$ equals $r$, and the jump size $\kappa = \kappa$. 
	
Since $\Delta \leq n/3$, it follows from Le Cam's lemma \citep[e.g.][]{yu1997assouad} and Lemma~2.6 in \cite{Tsybakov2009} that
	\[
		\inf_{\widehat{\eta}} \sup_{P \in \mathcal{P}} \mathbb{E}_P(|\widehat{\eta} - \eta|) \geq (n/3) (1 - d_{\mathrm{TV}} (P_0, P_1)) \geq \frac{n}{6} \exp\{-\mathrm{KL}(P_0, P_1)\}.
	\]
	Since both $P_0$ and $P_1$ are product measures, it holds that
	\begin{align*}
		\mathrm{KL}(P_0, P_1) & = \frac{\kappa^2}{\sigma^2} \left\{\sum_{i = 1}^{\Delta} \left(\frac{i-\Delta}{n}\right)^{2r} + \sum_{i = n-\Delta + 1}^n \left(\frac{i-n+\Delta}{n}\right)^{2r}\right\} \\
		& \leq \frac{2\kappa^2}{\sigma^2}\sum_{i = 1}^{\Delta} \left(\frac{i}{n}\right)^{2r} \leq \frac{2\kappa^2}{\sigma^2 n^{2r}} \int_1^{\Delta+1} x^{2r}\, dx \leq \frac{c_0\kappa^2 \Delta^{2r+1}}{\sigma^2 n^{2r}} = c_0\xi.
	\end{align*}
	Therefore
	\[
		\inf_{\hat{\eta}} \sup_{P \in \mathcal{P}} \mathbb{E}_P(|\hat{\eta} - \eta|) \geq \frac{n}{6\exp(c_0 \xi)} = cn.
	\]

\end{proof}

\begin{proof}[Proof of \Cref{lem-lb-3333}]
Let $P_0$ denote the joint distribution of the independent random variables $\{y_i\}_{i = 1}^n$ such that
	\[
		y_i \sim \begin{cases}
 			\mathcal{N}(0, \sigma^2), & i \in \{1, \ldots, \Delta\} \\
 			\mathcal{N}(\kappa (i/n - \Delta/n)^r, \sigma^2), & i \in \{\Delta + 1, \ldots, n\}. 
 		\end{cases}
	\]
	Let $P_1$ denote the joint distribution of the independent random variables $\{z_i\}_{i = 1}^n$ such that
	\[
		z_i \sim \begin{cases}
 			\mathcal{N}(0, \sigma^2), & i \in \{1, \ldots, \Delta + \delta\} \\
 			\mathcal{N}(\kappa (i/n - \Delta/n - \delta/n)^r, \sigma^2), & i \in \{\Delta + \delta +  1, \ldots, n\},
 		\end{cases}
	\]
	where $\delta$ is a positive integer no larger than $\Delta$.  We further assume that $\Delta \leq n/4$.
	
As for $P_0$, it is easy to see that 
	\[
		\mathbb{E}(y_i) = \begin{cases}
 			0, & i \in \{1, \ldots, \Delta\}, \\
 			\kappa \left\{(i-\Delta)/n\right\}^r, & i \in \{\Delta +  1, \ldots, n\},
 		\end{cases}
	\]
	which implies that the change point of $P_0$ satisfies $\eta(P_0) = \Delta + 1$.  Recalling \eqref{eq-active-rk-kk}, we also know that the corresponding order $r_1$ equals $r$, and the jump size $\kappa_1 = \kappa$.  In addition, at the change point, under the reparametrisation, $a_{1, l} = b_{1, l}$, $l \in \{0, \ldots, r - 1\}$.
	
As for $P_1$, we have that
	\[
		\mathbb{E}(y_i) = \begin{cases}
 			0, & i \in \{1, \ldots, \Delta + \delta\}, \\
 			\kappa \left\{(i-\Delta - \delta)/n\right\}^r, & i \in \{\Delta + \delta + 1, \ldots, n\},
 		\end{cases}
	\]
	which implies that the change point of $P_1$ satisfies $\eta(P_1) = \Delta + \delta + 1$.  Recalling \eqref{eq-active-rk-kk}, we also know that the corresponding order $r_1$ equals $r$, and the jump size $\kappa_1 = \kappa$.  In addition, at the change point, under the reparametrisation, $a_{1, l} = b_{1, l}$, $l \in \{0, \ldots, r - 1\}$.

It then follows from Le Cam's lemma \citep[e.g.][]{yu1997assouad}, a standard reduction of estimation to two point testing, and Lemma~2.6 in \cite{Tsybakov2009}, a form of Pinsker's inequality, that
	\begin{align}
		& \inf_{\widehat{\eta}} \sup_{P \in \mathcal{Q}} \mathbb{E}_P(|\widehat{\eta} - \eta|) \geq \delta (1 - d_{\mathrm{TV}}(P_0, P_1)) \nonumber \\
		\geq & \frac{\delta}{2} \exp\left( -\frac{\kappa^2}{\sigma^2 n^{2r}} \sum_{i = \Delta + 1}^{\Delta + \delta}(i - \Delta)^{2r} \right)\\
		& \times  \exp\left( -\frac{\kappa^2}{\sigma^2 n^{2r}} \sum_{i = \Delta + \delta + 1}^n \left\{(i - \Delta)^r - (i - \Delta - \delta)^r \right\}^2\right) \nonumber \\
		= & \frac{\delta}{2} \mbox{(I)} \times \mbox{(II)}.\label{eq-aaaaaaaa}
	\end{align}
	
As for the term (I), we have that 
	\begin{align*}		
		\mbox{(I)} = & \exp\left( -\frac{\kappa^2}{\sigma^2 n^{2r}} \sum_{i = 1}^{\delta}i^{2r} \right) \geq  \exp\left( -\frac{\kappa^2}{\sigma^2 n^{2r}} \int_1^{\delta + 1} x^{2r}\,dx \right)\\
		\geq&  \exp\left\{ -\frac{\kappa^2 (\delta+1)^{2r+1}}{(2r+1)\sigma^2n^{2r}}  \right\}
		\geq \exp\left\{-\frac{c\kappa^2\delta^{2r+1}}{\sigma^2 n^{2r}} \right\}.
	\end{align*}
	In order to ensure that $\mbox{(I)} \gtrsim 1$, we need
	\begin{equation}\label{eq-bbbbbbbb}
		\delta \lesssim \left(\frac{\sigma^2 n^{2r}}{\kappa^2}\right)^{1/(2r+1)}.
	\end{equation}

As for the term (II), we have that 	
	\begin{align*}
		\mbox{(II)} = &	 \exp\left( -\frac{\kappa^2}{\sigma^2 n^{2r}} \sum_{i = 1}^{n-\Delta - \delta} \left\{\sum_{l = 0}^{r-1}{r \choose l} i^l \delta^{r-l} \right\}^2\right)\\
		\geq& \exp\left( -\frac{r\kappa^2}{\sigma^2 n^{2r}} \sum_{i = 1}^{n-\Delta - \delta} \sum_{l = 0}^{r-1}{r \choose l}^2 i^{2l} \delta^{2r-2l} \right) \\
		\geq & \exp\left( -\frac{r\kappa^2}{\sigma^2 n^{2r}} \sum_{l = 0}^{r-1}  {r \choose l}^2 \delta^{2r-2l}\int_{i = 1}^{n-\Delta - \delta +1}  x^{2l} \, dx \right) \\
		\geq & \exp\left( -\frac{r\kappa^2}{\sigma^2 n^{2r}} \sum_{l = 0}^{r-1}  {r \choose l}^2 \delta^{2r-2l} \frac{(n-\Delta - \delta +1)^{2l+1}}{2l+1} \right) \\
		\geq & \exp\left( -\frac{r\kappa^2}{\sigma^2 n^{2r}} \sum_{l = 0}^{r-1}  {r \choose l}^2  \frac{\delta^{2r-2l} n^{2l+1}}{2l+1} \right)\\
		=&  \exp\left( -\frac{r\kappa^2}{\sigma^2 } \sum_{l = 0}^{r-1}  {r \choose l}^2  \frac{n}{2l+1}  \left(\frac{\delta}{n}\right)^{2r-2l} \right).
	\end{align*}
	Since $\delta < n$, we have that with a constant $C$ being a function of $r$, it holds that
	\begin{align*}
		\mbox{(II)} \geq \exp\left( -\frac{C\kappa^2}{\sigma^2 } n\left(\frac{\delta}{n}\right)^2 \right).
	\end{align*}
	In order to ensure that $\mbox{(II)} \gtrsim 1$, we need
	\begin{equation}\label{eq-ccccccccc}
		\delta \lesssim \left(\frac{\sigma^2 n}{\kappa^2}\right)^{1/2}.
	\end{equation}

Combining \eqref{eq-aaaaaaaa}, \eqref{eq-bbbbbbbb} and \eqref{eq-ccccccccc}, setting 
	\[
		\delta = \max\left\{\left[\frac{c \sigma^2 n}{\kappa^2}\right]^{1/2},\, 1\right\},
	\]
	we complete the proof.
\end{proof}




\bibliographystyle{plainnat}
\bibliography{citation} 


\end{document}